\documentclass[11pt,leqno]{article}
\usepackage[utf8]{inputenc}

\usepackage{amsmath}
\usepackage{amsthm}
\usepackage[shortlabels]{enumitem} \setlist{wide, labelindent=0pt}
\usepackage{mathtools}
\usepackage{stmaryrd}
 
\usepackage[colorlinks=false]{hyperref}

\usepackage{color}

\usepackage[charter]{mathdesign}

\makeatletter
\g@addto@macro\bfseries{\boldmath}
\makeatother

\usepackage{xy}
\xyoption{all}

\numberwithin{equation}{section}

\theoremstyle{plain}
    \newtheorem{theorem}[equation]{Theorem}
    \newtheorem{lemma}[equation]{Lemma}
    
    \newtheorem{proposition}[equation]{Proposition}

    \newtheorem*{theorem*}{Theorem}
    \newtheorem*{proposition*}{Proposition}
    \newtheorem*{corollary*}{Corollary}
    \newtheorem*{lemma*}{Lemma}
    \newtheorem*{conjecture*}{Conjecture}
    \newtheorem{definition-theorem}[equation]{Definition/Theorem}
    \newtheorem{definition-lemma}[equation]{Definition/Lemma}
\theoremstyle{definition}
    \newtheorem{definition}[equation]{Definition}
    \newtheorem{example}[equation]{Example}
    \newtheorem{examples}[equation]{Examples}

    \newtheorem{remark}[equation]{Remark}
    
    \newtheorem{remarks}[equation]{Remarks}

    \newtheorem{exercise}[equation]{Exercise}
    \newtheorem*{exercise*}{Exercise}
    \newtheorem*{solution*}{Solution}
    
   \newtheorem{thm1}{Theorem}
 
    \newcommand{\R}{\mathbb{R}}
    \newcommand{\C}{\mathbb{C}}

   	\renewcommand{\phi}{\varphi}
	\let\epsilon\varepsilon

    \newcommand{\Bounded}{\operatorname{B}}
    
    \newcommand{\Compact}{\operatorname{K}}
    \newcommand{\Adjointable}{\operatorname{L}}
    
    \newcommand{\Unitary}{\operatorname{U}}
    \newcommand{\Mat}{\operatorname{M}}

\newcommand{\llangle}{\langle\!\langle}
\newcommand{\rrangle}{\rangle\!\rangle}
\newcommand{\into}{\hookrightarrow}

\newcommand{\dd}{d}  
\newcommand{\restrict}{\raisebox{-.5ex}{$|$}}

\newcommand{\id}{\mathrm{id}}

  \renewcommand{\prod}{\bigsqcap}

    \DeclareMathOperator{\Ind}{Ind}

    \DeclareMathOperator{\lspan}{span}
	
    \DeclareMathOperator{\SL}{SL}
    \DeclareMathOperator{\GL}{GL}
    
    \DeclareMathOperator{\trace}{trace}

    \DeclareMathOperator{\HS}{HS}

\newcommand{\germ}{\mathfrak}

\newcommand{\bra}[1]{{\langle{#1}\vert}}
	\newcommand{\ket}[1]{{\vert {#1}\rangle}}

      \newcommand{\bmat}[1]{{\ensuremath \begin{bmatrix} #1 \end{bmatrix}}}
\newcommand{\smallbmat}[1]{{\ensuremath \left[\begin{smallmatrix} #1 \end{smallmatrix}\right]}}

\DeclareMathOperator{\ev}{ev}

\title{Notes on $C^*$-algebras, representations, and Morita equivalence \\ \medskip \Large with a view toward $C^*$-algebras of reductive groups}
\author{Tyrone Crisp\thanks{Department of Mathematics \& Statistics, University of Maine. \href{mailto:tyrone.crisp@maine.edu}{tyrone.crisp@maine.edu}}}
\date{November 2025; revised February 2026}
 
\begin{document}

\maketitle

These notes give an expanded account of my lectures at the CIRM-IHP research school on \emph{Methods in representation theory and operator algebras}, January 6--10, 2025. Their goal is to explain a proof of the following theorem:

\begin{thm1}\label{thm:main}
For each real reductive group $G$ there is a Morita equivalence of $C^*$-algebras 
\[
C^*_r(G)\sim 
\bigoplus_{[L,\sigma]} C_0(\germ{a}_L^*/W'_\sigma) \rtimes R_\sigma.
\]
\end{thm1}
\noindent(The meaning of the notation will be explained in due course.) 

This theorem first appeared in a short note by A.~Wassermann \cite{Wassermann} (referring to another short note by Arthur \cite{Arthur}), with the case of complex groups having been worked out earlier by Penington--Plymen \cite{Penington-Plymen}. Wassermann used this description of the reduced $C^*$-algebra of a real reductive group to verify (in the linear reductive case) the Connes--Kasparov conjecture, which computes the $K$-theory of these $C^*$-algebras in index-theoretic terms. Our understanding of the connections between representation theory and $C^*$-algebras inherent in the Connes--Kasparov isomorphism continues to advance: see for instance \cite{BCH,Lafforgue-ICM,Higson-Mackey,CCH,Afgoustidis-Mackey,CHST,CHS,BHY,CHR}, among other works.

These notes, like the lectures that they document, are intended as an on-ramp for newcomers to this area. They do not attempt to provide a comprehensive survey of the field; rather, we have tried to map out a reasonably accessible route to one specific result---Theorem \ref{thm:main}---while introducing some of the basic ideas from $C^*$-algebra theory that are used in this computation. Our target audience is Masters- and PhD-level students, and other mathematicians who are not specialists in $C^*$-algebras.  The main prerequisites are some knowledge of basic functional analysis, and a willingness to accept without proof the basic results about $C^*$-algebras that we will rely on. (Some prior exposure to the representation theory of finite or compact groups would also be helpful, though we have summarised the necessary facts in  an appendix.) 

Our tight focus on Theorem \ref{thm:main} has meant glossing over  many fundamental ideas about $C^*$-algebras, while discussing comparatively simple results in much greater detail. To readers looking for a more comprehensive introduction to the theory of $C^*$-algebras and their representations, we warmly recommend Dixmier's classic \cite{Dixmier-Cstar-fr,Dixmier-Cstar-en}, and Blackadar's survey \cite{Blackadar}. For an historical overview of some other connections between $C^*$-algebras and group representations, see Rosenberg's survey \cite{Rosenberg}. For Hilbert modules and Morita equivalence see Rieffel's survey \cite{Rieffel-survey}, and the books by Raeburn and Williams \cite{Raeburn-Williams} and Lance \cite{Lance}.

These notes concentrate almost exclusively on the $C^*$-algebraic aspects of Theorem \ref{thm:main}: owing both to a lack of time during the lectures, and a lack of expertise, we have not attempted to explain the many deep results in the representation theory of real reductive groups (due to Harish-Chandra, Langlands, Knapp, Stein, and others) that our $C^*$-algebraic computation relies on. For further details about how these representation-theory results are used to study $C^*$-algebras, see  for instance \cite{CCH} and \cite{CHST}. For a fuller treatment of the field, including references to the original sources, see for instance the books of Knapp \cite{Knapp-overview} and Wallach \cite{WallachI,WallachII}.

 The main expository sections of these notes are Section \ref{sec:Cstar}, which discusses the basic theory of $C^*$-algebras and their representations, culminating in Kaplansky's Stone--Weierstrass theorem; and Section \ref{sec:Morita}, on the notion of Morita equivalence for $C^*$-algebras. Each of these longer sections is followed by a shorter one in which we briefly explain how the general theory can be applied to the $C^*$-algebras of real reductive groups, giving a proof of Theorem \ref{thm:main}. In addition to Wassermann's brief note \cite{Wassermann}, proofs of (parts of) Theorem \ref{thm:main} can be found in \cite{CCH}, \cite{AA}, and \cite{CHST}, and our exposition here is indebted to all of those sources, as well as to  work in progress with Pierre Clare.  
 
We have also included an appendix listing some basic facts from the representation theory of finite and compact groups (Appendix \ref{sec:compact-groups}), as well as solutions---some of them rather sketchy, and none of them especially elegant---to the exercises that are scattered throughout the notes (Appendix \ref{sec:solutions}.) The exercises are for the most part fairly routine verifications, whose purpose is to gently introduce students to some of the basic computations and lines of argument in this area.

\paragraph*{Acknowledgements:} It is a pleasure to thank the organisers Alexandre Afgoustidis, Anne-Marie Aubert, Pierre Clare, and Haluk \c{S}eng\"un, for having invited me to give these lectures; and to thank all of the participants, and our hosts at CIRM, for a stimulating and very enjoyable meeting. My own participation in this event was supported by funding from the American Mathematical Society, the Simons Foundation, and the Universit\'e de Lorraine.

\section{Representations of $C^*$-algebras}\label{sec:Cstar}

Our goal in these notes, loosely speaking, is to identify the (reduced) $C^*$-algebra of a real reductive group with an algebra of functions on a space of representations, via a kind of Fourier transform. An important step in this computation will be to determine the range of our Fourier transform. In general, the determination of the range of a Fourier transform on a given space of functions relies on a sometimes delicate interplay between local regularity properties and asymptotic behaviour. In our $C^*$-algebraic setting the picture is quite straightforward: the relevant local regularity property is \emph{continuity}, and the relevant asymptotic behaviour is \emph{vanishing at infinity}. In this section we will introduce the tool that we will use to prove this fact about the range of the Fourier transform: a generalisation of the Stone--Weierstrass theorem, due  to Kaplansky.

Before we get to that,  we will introduce the basic ideas and results about $C^*$-algebras and their representations that are needed to understand what the Stone--Weierstrass theorem says, and why it is true.

\subsection{$C^*$-algebras: basic definitions and examples}

\begin{definition}
\begin{enumerate}[\rm(1)]
\item A \emph{$C^*$-algebra} is a Banach space (over $\C$) with an associative bilinear multiplication satisfying $\|ab\|\leq \|a\|\,\|b\|$, equipped with an involution $*:A\to A$ that is conjugate-linear, satisfies $(ab)^*=b^*a^*$, and is related to the norm by the identity $\| a^* a\| = \|a\|^2$.
\item A \emph{homomorphism} of $C^*$-algebras $\phi:A\to B$ is a linear map compatible with multiplication and involution: $\phi(ab)=\phi(a)\phi(b)$ and $\phi(a^*)=\phi(a)^*$ for all $a,b\in A$.
\end{enumerate}
\end{definition}

\begin{remarks}
\begin{enumerate}[\rm(1)]
\item A $C^*$-algebra is not required to possess a multiplicative identity. Almost none of the $C^*$-algebras that we will meet in these notes have one. But for many purposes $C^*$-algebras behave like algebras that have $1$; for instance, the multiplication map $A\times A\to A$ is surjective (\emph{cf}.~Theorem \ref{thm:Cohen-Hewitt}.)
\item Every $C^*$-algebra has a norm, and therefore a topology, but the definition of homomorphisms of $C^*$-algebras does not require continuity, because the norm is completely determined by the algebraic structure: {\bf Theorem:} Homomorphisms of $C^*$-algebras are \emph{automatically} contractive with closed range; and injective homomorphisms are automatically isometric. 
\item The point above implies that if an algebra with involution has a norm making it a $C^*$-algebra, then there is only one such norm. For this reason the norm often recedes into the background when we study $C^*$-algebras: the norm can be very useful (because we can use completeness to build a solution to a problem by building a sequence of progressively better approximate solutions); but at other times we don't really need to think about the norm.
\item For more or less the same reason, the $*$ operation on a $C^*$-algebra is automatically isometric: $\|a^*\|=\|a\|$ for all $a\in A$. 
\end{enumerate}
\end{remarks}

\begin{examples}\label{examples-Cstar}
\begin{enumerate}[\rm(1)]
\item Let $X$ be a locally compact Hausdorff topological space, and consider the set $C_0(X)$ of continuous functions $f:X\to \C$ that vanish at infinity (\emph{i.e.}: for every $\epsilon>0$ the set $\{x\in X\ |\ |f(x)|\geq \epsilon\}$ is compact.) We add and multiply functions pointwise; $\|f\|\coloneqq \sup_{x\in X}|f(x)|$; and $f^*(x)\coloneqq \overline{f(x)}$. This makes $C_0(X)$ into a $C^*$-algebra. {\bf Theorem:} Every commutative $C^*$-algebra is isomorphic to one of this form.
\item Let $H$ be a complex Hilbert space, and let $\Bounded(H)$ be the space of all bounded operators $H\to H$. We multiply operators by composing them. We let $*$ be the usual adjoint operation, characterised by $\langle \xi\,|\,t^*(\eta)\rangle = \langle t(\xi)\, |\, \eta\rangle$ for all $\xi,\eta\in H$. Equipped with the operator norm ($\|t\| = \sup_{\|\xi\|=1}\|t\xi\|$), $\Bounded(H)$ is a $C^*$-algebra. (\emph{Convention:} throughout these notes our inner products $\langle\xi\, |\, \eta\rangle$ are linear in $\eta$ and conjugate-linear in $\xi$.)
\item {\bf Definition:} By a \emph{subalgebra} of a $C^*$-algebra we mean a norm-closed linear subspace that is closed under multiplication and $*$. Such a subalgebra is itself a $C^*$-algebra. {\bf Theorem:} Every $C^*$-algebra is isomorphic to a subalgebra of $\Bounded(H)$ for some $H$. 
\item The space of \emph{compact} operators $\Compact(H)$, on the Hilbert space $H$, is a subalgebra of $\Bounded(H)$, hence a $C^*$-algebra. 
\item Combining examples (1) and (4), if $X$ is a locally compact Hausdorff space, and $H$ is a Hilbert space, we consider the space $C_0(X,\Compact(H))$ of continuous functions $f:X\to \Compact(H)$ that vanish (with respect to the norm on $H$) at infinity. Addition, scalar multiplication, multiplication, and the $*$ are all defined pointwise, and the norm is $\|f\| \coloneqq \sup_{x\in X}\|f(x)\|$. This makes $C_0(X,\Compact(H))$ into a $C^*$-algebra.
\item Let $G$ be a locally compact group. All integrals on $G$ are taken with respect to some choice of left-invariant Haar measure. The convolution algebra $C_c(G)$ of continuous, compactly supported, complex-valued functions on $G$ acts as bounded operators on the Hilbert space $L^2(G)$: for $f\in C_c(G)$ and $\xi\in L^2(G)$ we define
\[
\lambda(f)\xi (g) = \int_G f(h)\xi(h^{-1}g)\, \dd h.
\]
Then we define $C^*_r(G) = \overline{\lambda(C_c(G))}$, the closure of $\lambda(C_c(G))$ in the operator norm. This is a $C^*$-algebra, called the \emph{reduced $C^*$-algebra} of the group $G$.
\item There are other $C^*$-algebras attached to locally compact groups, obtained by completing $C_c(G)$ in different norms. In these notes we will only talk about reduced $C^*$-algebras.
\item Let $W$ be a finite group acting on a $C^*$-algebra $A$ by $*$-automorphisms: this means that for each $w\in W$ we have an invertible linear map $\beta_w:A\to A$ satisfying $\beta_w(ab)=\beta_w(a)\beta_w(b)$ and $\beta_w(a^*)=\beta_w(a)^*$; and these maps satisfy $\beta_{w_1}\circ \beta_{w_2}=\beta_{w_1 w_2}$. We form two new $C^*$-algebras from these data:
\begin{enumerate}[\rm(i)]
\item The \emph{fixed-point algebra} 
\[
A^W = \{a\in A\ |\ \beta_w(a)=a\text{ for all }w\in W\}.
\]
This is a subalgebra of $A$, and thus a $C^*$-algebra with the operations inherited from $A$.
\item The \emph{crossed product}
\[
A\rtimes W = \left\{\textstyle\sum_{w\in W} a_w w\ \middle|\ a_w\in A\right\}
\]
is made into an algebra by multiplying elements of $A$ together as in $A$; multiplying elements of $W$ together as in $W$; and setting $wa = \beta_w(a)w$ for $w\in W$ and $a\in A$.  We define an involution $*$ on $A\rtimes W$ by setting $(aw)^*=w^{-1}a^*=\beta_{w^{-1}}(a^*) w^{-1}$. Finally, we need to put a $C^*$-algebra norm on $A\rtimes W$. The details of how this can be done will not be important until later, so we postpone our discussion of this point to Exercise \ref{ex:crossed-product-norm}.
\end{enumerate}
\end{enumerate}
\end{examples}

\begin{exercise}\label{ex:W-acting-on-X}
Let $W$ be a finite group acting by homeomorphisms on a locally compact Hausdorff space $X$. For each $w\in W$ and $f\in C_0(X)$ define $\beta_w(f)(x)=f(w^{-1}x)$. Prove that this defines an action of $W$ on $C_0(X)$ by $*$-automorphisms, and that $C_0(X)^W \cong C_0(X/W)$.
\end{exercise}

\begin{exercise}\label{ex:iterated-crossed-product}
Let $W$ be a finite group, acting on a $C^*$-algebra $A$ by $*$-automorphisms. Suppose that $W$ is a semi-direct product $U\rtimes V$. Prove that the formula $\alpha_v(au)\coloneqq \beta_v(a) vuv^{-1}$ defines an action of $V$ on the crossed product $A\rtimes U$, and find an isomorphism of $C^*$-algebras $(A\rtimes U)\rtimes V \cong A\rtimes W$. 
\end{exercise}

\begin{definition}
By an \emph{ideal} in a $C^*$-algebra $A$ we mean a two-sided ideal that is closed in the norm topology.
\end{definition}

\begin{theorem}
Let $J$ be an ideal in $A$.
\begin{enumerate}[\rm(1)]
\item We have $j^*\in J$ for all $j\in J$, and so $J$ is a $C^*$-algebra.
\item The quotient $A/J$ is a $C^*$-algebra, under the quotient norm
\[
\|a+J\| = \inf_{j\in J}\|a+j\|.
\]
\item All of the expected isomorphism theorems are valid.\hfill\qed
\end{enumerate}
\end{theorem}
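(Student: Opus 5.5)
The plan is to rely on two standard tools: continuous functional calculus for self-adjoint elements, and the existence of an approximate unit for an ideal. Concretely, I take as known that every $C^*$-algebra $J$ has an increasing net $(u_\lambda)$ of positive contractions with $\|j-ju_\lambda\|\to 0$ for all $j\in J$. Here $u_\lambda$ can be built from finite subsets $F\subset J$ via functional calculus, using $u_F = h_F(1/|F| + h_F)^{-1}$ with $h_F=\sum_{x\in F}xx^*$. This uses only that $J$ is closed under multiplication. Since we do not yet know $J$ is self-adjoint, I would first run the argument in the $C^*$-subalgebra $J\cap J^*$, which is closed under $*$.

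For (1), let $j\in J$. Then $jj^*\in J$, because $J$ is a two-sided ideal in $A$, and $jj^*$ is self-adjoint, so $jj^*\in J\cap J^*$. Let $(u_\lambda)$ be an approximate unit for $B=J\cap J^*$. I compute
\[
\|j^*-j^*u_\lambda\|^2=\|(1-u_\lambda)jj^*(1-u_\lambda)\|\le\|jj^*-jj^*u_\lambda\|\to 0 .
\]
This computation is done formally in the unitisation of $A$, using $\|1-u_\lambda\|\le 1$. Each $j^*u_\lambda$ lies in $J$, since $u_\lambda\in J$ and $J$ is a left ideal. As $J$ is closed, $j^*\in J$. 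The $C^*$-identity on $J$ is inherited from $A$, so $J$ is a $C^*$-algebra.

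For (2), the quotient $A/J$ is a Banach algebra with the quotient norm by general Banach-space facts. It also carries an isometric involution, because $J^*=J$ by (1) and $\|a^*\|=\|a\|$. The only real content is the $C^*$-identity $\|a+J\|^2\le\|a^*a+J\|$; the reverse inequality is automatic. For this I would first prove the key formula
\[
\|a+J\|=\lim_\lambda\|a-au_\lambda\|,
\]
with $(u_\lambda)$ now an approximate unit for $J$ itself. The upper bound is clear. For the lower bound, given $j\in J$, write
\[
\|a-au_\lambda\|\le\|(a+j)(1-u_\lambda)\|+\|j-ju_\lambda\|\le\|a+j\|+o(1).
\]
Granting the formula, I then estimate
\[
\|a+J\|^2=\lim\|(1-u_\lambda)a^*a(1-u_\lambda)\|\le\|a^*a+J\|.
\]
Here the inequality holds because $\|(1-u_\lambda)x(1-u_\lambda)\|\le\|x(1-u_\lambda)\|$, and we apply this with $x=a^*a$. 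This proves (2). I expect this step, together with the construction of approximate units, to be the main obstacle.

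For (3), once $A/J$ is a $C^*$-algebra, the isomorphism theorems reduce to algebra plus the earlier remark that homomorphisms of $C^*$-algebras have closed range and that injective ones are isometric. For example, the first isomorphism theorem goes as follows. Given $\phi:A\to B$, its kernel is closed, hence an ideal. The induced map $A/\ker\phi\to B$ is an injective homomorphism of $C^*$-algebras, hence an isometric isomorphism onto the closed subalgebra $\phi(A)$. The second and third theorems then follow in the usual way. The second says that for a subalgebra $C$ and an ideal $J$, $C+J$ is closed and $C/(C\cap J)\cong (C+J)/J$. Closedness of $C+J$ comes from the closed-range property applied to $C\to A/J$.
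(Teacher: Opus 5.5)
The paper states this theorem without proof, as standard background, so there is no argument in the paper to compare yours against. Your proof is correct and is the standard textbook argument, as in Dixmier's book, which the notes recommend. It has three steps:
\begin{enumerate}[\rm(1)]
\item An approximate unit in the $C^*$-algebra $J\cap J^*$ shows that $J^*=J$.
\item An approximate unit for $J$ gives the formula $\|a+J\|=\lim_\lambda\|a-au_\lambda\|$. The estimate $\|(1-u_\lambda)a^*a(1-u_\lambda)\|\le\|a^*a-a^*au_\lambda\|$ then yields the $C^*$-identity in $A/J$.
\item The isomorphism theorems follow because injective homomorphisms of $C^*$-algebras are isometric.
\end{enumerate}

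I have two small remarks.

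First, your aside that the construction of $u_F$ ``uses only that $J$ is closed under multiplication'' is not quite right. You need $xx^*\in J$ for $x\in J$, and that uses the right-ideal property. With that property, however, the detour through $J\cap J^*$ becomes unnecessary:
\begin{enumerate}[$\bullet$]
\item $h_F=\sum_{x\in F}xx^*$ lies in $J$.
\item $u_F=h_F(1/|F|+h_F)^{-1}$ lies in $J$, because it is a norm limit of polynomials in $h_F$ without constant term.
\item $\|x-u_Fx\|\le(4|F|)^{-1/2}$ for all $x\in F$.
\end{enumerate}
Taking adjoints gives $\|x^*-x^*u_F\|\to 0$ with $x^*u_F\in J$, so $x^*\in J$ directly.

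Second, the closed-range property quoted in the paper's opening remarks is, in the usual development, itself deduced from part (2). You only invoke it after proving (2), so there is no circularity. For the second isomorphism theorem you can also obtain it directly: apply ``injective $\Rightarrow$ isometric'' to the induced map $C/(C\cap J)\to A/J$, exactly as you do for the first isomorphism theorem. This shows that $(C+J)/J$ is closed in $A/J$, and hence that $C+J$ is closed in $A$.
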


\begin{example}
Let $U$ be an open subset of a locally compact Hausdorff space $X$. Restriction of functions from $X$ to the closed subset $X\setminus U$ defines a homomorphism of $C^*$-algebras $C_0(X)\to C_0(X\setminus U)$. This homomorphism is surjective (by Tietze's extension theorem), and its kernel is the ideal $C_0(U)\subseteq C_0(X)$. Thus $C_0(X)/C_0(U)\cong C_0(X\setminus U)$. The map $U\mapsto C_0(U)$ is a bijection between the set of open subsets of $X$, and the set of ideals of $C_0(X)$.
\end{example}

\begin{example}
The algebra of compact operators $\Compact(H)$ is an ideal in $\Bounded(H)$. The algebra $\Compact(H)$ itself is \emph{simple}: its only ideals are $0$ and $\Compact(H)$.
\end{example}

\subsection{Representations of $C^*$-algebras}

\begin{definition}\label{def:Cstar-reps}
Let $A$ be a $C^*$-algebra, and let $H$ be a Hilbert space.
\begin{enumerate}[\rm(1)]
\item A \emph{representation} of $A$ on $H$ is a homomorphism $\pi : A\to \Bounded(H)$.
\item A representation $\pi:A\to \Bounded(H)$ is called \emph{irreducible} if $H\neq 0$ and $H$ has no proper, nonzero, $A$-invariant closed subspaces. (A theorem of Kadison says that this is equivalent to the condition that $H$ has no proper nonzero $A$-invariant subspaces at all.)
\item Two irreducible representations $\pi_1:A\to \Bounded(H_1)$ and $\pi_2:A\to\Bounded(H_2)$ are \emph{equivalent}, or \emph{isomorphic}, if there is a unitary operator $u:H_1\to H_2$ such that $u\pi_1(a)=\pi_2(a)u$ for all $a\in A$. 
\item The set $\widehat{A}$ of equivalence classes of irreducible representations of $A$ carries a natural topology, in which the open sets are in bijection with the ideals in $A$: to each ideal $J$ of $A$ we associate the open set
\[
O_J = \left\{ \pi\in \widehat{A}\ \middle|\ \pi\restrict_J\neq 0 \right\}.
\]
(We usually won't distinguish in the notation between a representation $\pi$ and its equivalence class in $\widehat{A}$. Note that equivalent representations have equal kernels.)
\end{enumerate}
\end{definition}

The following fact is often useful when studying representations of $C^*$-algebras:

\begin{theorem}[Cohen--Hewitt factorisation]\label{thm:Cohen-Hewitt}
Let $A$ be a $C^*$-algebra, and let $X$ be a left Banach $A$-module (\emph{i.e.}, $X$ is a Banach space and a left $A$-module, and $\|a x \|\leq \|a\| \|x\|$ for all $a\in A$ and $x\in X$.) The set $AX=\{ax\ |\ a\in A,\ x\in X\}$ is a closed $A$-submodule of $X$.\hfill\qed
\end{theorem}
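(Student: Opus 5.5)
The plan is to run the standard approximate-unit argument: first show that the closure $\overline{AX}$ is a closed submodule, then prove that every element of it actually factors as $ax$, so that $AX=\overline{AX}$.

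\textbf{Reductions.} Since $A$ is a $C^*$-algebra, it has an approximate unit $(e_\lambda)$, and I will use a sequential-style construction inside the unitisation $\tilde A = A\oplus\C 1$. Let $X_0=\overline{\lspan}(AX)$. This is a closed submodule, and $e_\lambda y\to y$ for every $y\in X_0$. To see this, check it on $y=ax$, where $\|e_\lambda a x - ax\|\le \|e_\lambda a-a\|\,\|x\|\to 0$, and then extend by linearity and by density, using $\|e_\lambda\|\le 1$. It therefore suffices to show that every $y\in X_0$ has the form $ax$ with $a\in A$ and $x\in X_0$. Once this is done, $AX\subseteq X_0=AX_0\subseteq AX$, so $AX=X_0$ is closed, and it is evidently an $A$-submodule.

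\textbf{Factorisation.} Fix $y\in X_0$ and a constant $0<c<1$, say $c=1/2$. Choose elements $u_1,u_2,\dots$ from the approximate unit inductively, and define
\[
a_n=\sum_{k=1}^n c(1-c)^{k-1}u_k + (1-c)^n 1\in\tilde A.
\]
Each $a_n$ is invertible in $\tilde A$, with $\|a_n^{-1}\|\le (2c-1)^{-1}$-type bounds obtained by writing $a_n=(1-c)^n\bigl(1+\sum_k c(1-c)^{k-1-n}u_k\bigr)$. The cleaner route is to note that $a_n=(1-c)^n 1 + b_n$ with $b_n\ge 0$, so the inverse is controlled by functional calculus, using positivity of the $u_k$ and the fact that they approximately commute with the relevant elements. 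Set $x_n=a_n^{-1}y$. One computes
\[
a_{n+1}^{-1}-a_n^{-1}=a_{n+1}^{-1}\,c(1-c)^n(1-u_{n+1})\,a_n^{-1},
\]
so
\[
\|x_{n+1}-x_n\|\le C\,(1-c)^n\,\|(1-u_{n+1})a_n^{-1}y\|.
\]
The vector $a_n^{-1}y$ lies in $X_0$ because $a_n^{-1}\in\tilde A$ preserves $X_0$. I can therefore choose $u_{n+1}$ so that $\|(1-u_{n+1})a_n^{-1}y\|<2^{-n}$, which makes $(x_n)$ Cauchy with some limit $x\in X_0$. Meanwhile $a_n\to a:=\sum_k c(1-c)^{k-1}u_k\in A$ in norm, because the scalar term $(1-c)^n$ tends to $0$. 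Hence
\[
y=a_nx_n\to ax,
\]
so $y=ax$.

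\textbf{Main obstacle.} The difficult step is the uniform bound on $\|a_n^{-1}\|$ needed to make the estimate above work. The $u_k$ do not commute, so functional calculus cannot be applied to the sum directly. The approach I would try first is to choose the $u_k$ from a commuting family: take a single positive element, or work with $u_k=f_k(h)$ for one strictly positive $h$ in the separable subalgebra generated by the relevant elements. Then $a_n$ lies in a commutative subalgebra, its spectrum is contained in $[(1-c)^n,1]$, and the inverse can be estimated through a crude bound of the form $\|a_n^{-1}(1-u_{n+1})\|$. If that turns out to be awkward, the fallback is Cohen's original trick: require $\|a_n^{-1}\|\le (1-c)^{-n}$, which holds since $a_n\ge (1-c)^n$, and compensate by choosing $\|(1-u_{n+1})a_n^{-1}y\|<2^{-n}(1-c)^{2n}$. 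This smaller tolerance is still achievable, because at step $n$ only finitely many fixed vectors have to be approximated.
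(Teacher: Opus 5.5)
Your argument is correct in the form you call the fallback, and it is essentially the proof the paper cites (Blanchard's version of Cohen's argument, \cite[Proposition 2.33]{Raeburn-Williams}). That proof takes the $u_k$ positive and contractive, notes that $a_n\ge(1-c)^n 1$ so that $\|a_n^{-1}\|\le(1-c)^{-n}$, and telescopes. However, the main line of your proposal rests on a false claim, which you should delete. There is no bound on $\|a_n^{-1}\|$ independent of $n$, whatever the $u_k$ are and with or without commutativity. The canonical character $\tilde A\to\C$, $b+\mu 1\mapsto\mu$, sends $a_n$ to $(1-c)^n$, so $\|a_n^{-1}\|\ge(1-c)^{-n}$, and in fact $\|a_n^{-1}\|=(1-c)^{-n}$. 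Your ``$(2c-1)^{-1}$'' bound is also undefined at $c=1/2$. The detour through a commuting family or a strictly positive element therefore cannot help, and it is not needed.

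The fallback works because this growth is exactly cancelled:
$\|x_{n+1}-x_n\|\le\|a_{n+1}^{-1}\|\,c(1-c)^n\,\|(1-u_{n+1})a_n^{-1}y\|\le\frac{c}{1-c}\,\|(1-u_{n+1})a_n^{-1}y\|$.
So requiring the last norm to be $<2^{-n}$ already suffices; your extra factor $(1-c)^{2n}$ is harmless but unnecessary. The bound on $\|a_{n+1}^{-1}\|$ does not depend on which $u_{n+1}$ you pick, so choosing $u_{n+1}$ after $a_n^{-1}y$ involves no circularity.

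One detail you should make explicit. The displayed inequality uses the $C^*$-norm of $\tilde A$, so you need the action of $\tilde A$ on $X_0$ to be contractive for that norm. This holds because for $\tilde a\in\tilde A$ and $z\in X_0$ we have $\tilde a z=\lim_\lambda \tilde a(e_\lambda z)=\lim_\lambda(\tilde a e_\lambda)z$, with $\tilde a e_\lambda\in A$ and $\|\tilde a e_\lambda\|\le\|\tilde a\|$. The first equality uses the crude estimate $\|(b+\mu 1)z\|\le(\|b\|+|\mu|)\|z\|\le 3\|b+\mu 1\|\,\|z\|$, valid on all of $X$; that estimate alone would also suffice, at the cost of a constant. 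The same boundedness justifies $a_nx_n\to ax$, since $\|a_n-a\|\le 2(1-c)^n$ in $\tilde A$ and $(x_n)$, being Cauchy, is bounded.
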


See, eg, \cite[Proposition 2.33]{Raeburn-Williams} for an exposition of a proof of Theorem \ref{thm:Cohen-Hewitt} due to Blanchard. The theorem implies that if $\pi:A\to \Bounded(H)$ is an irreducible representation, then $H=\{\pi(a)\xi\ |\ a\in A,\ \xi\in H\}$. (The theorem also applies to right modules, with the obvious modifications.)

Several important facts about $C^*$-algebra representations follow from the close relationship between representations and \emph{states}:

\begin{definition}\label{def:states}
Let $A$ be a $C^*$-algebra.
\begin{enumerate}[\rm(1)]
\item We say that $a\in A$ is  \emph{positive} (notation: $a\geq 0$) if $a=b^*b$ for some $b\in A$. ({\bf Theorem:} $a\geq 0$ if and only if for every representation $\pi:A\to \Bounded(H)$ and every $\xi\in H$ we have $\langle \xi\ |\ \pi(a)\xi\rangle\geq 0$.) 
\item A \emph{state} on $A$ is a linear map $\phi:A\to \C$ with $\|\phi\|=1$, such that $\phi(a)\geq 0$ whenever $a\geq 0$. 
\item Given a state $\phi$ on $A$, we construct a representation $\pi_\phi : A\to \Bounded(H_\phi)$ as follows:
\begin{enumerate}[$\bullet$]
\item Let $J_\phi \coloneqq \{a\in A\ |\ \phi(a^*a)=0\}$. 
\item $H_\phi \coloneqq \overline{ A/J_\phi}$, the completion in the norm $\|a+J_\phi\|=\phi(a^*a)^{1/2}$.
\item $H_\phi$ is a Hilbert space, with inner product $\langle a+J_\phi\ |\ b+J_\phi\rangle = \phi(a^*b)$.
\item $\pi_\phi:A\to  \Bounded(H_\phi)$ is defined by $\pi_\phi(a)(b+J_\phi) = (ab)+J_\phi$.
\end{enumerate}
This is the  \emph{GNS construction}, named after Gelfand, Naimark, and Segal. 
\end{enumerate}
\end{definition}

This construction, together with the Hahn--Banach and Krein--Milman theorems, leads to: 

\begin{theorem}\label{thm:reps}
Let $A$ be a $C^*$-algebra.
\begin{enumerate}[\rm(1)]
\item A representation of $A$ is irreducible if and only if it is isomorphic to the GNS representation associated to a \emph{pure} state of $A$: that is, a state that is not a convex combination of other states.
\item The irreducible representations of $A$ separate points: that is, if $a\neq b$ in $A$ then there is an irreducible representation $\pi$ of $A$ with $\pi(a)\neq \pi(b)$.
\item Let $B$ be a subalgebra of a $C^*$-algebra $A$, and let $\pi:B\to \Bounded(H)$ be an irreducible representation. There is an irreducible representation $\pi':A\to \Bounded(H')$, where $H'$ is a Hilbert space containing $H$ as a $B$-invariant closed subspace, such that $\pi(b)\xi =\pi'(b)\xi$ for all $b\in B$ and $\xi\in H$.\hfill\qed
\end{enumerate}
\end{theorem}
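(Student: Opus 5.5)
The plan is to treat the three parts in order. All three rest on the GNS construction of Definition \ref{def:states}, together with two standard convexity facts: the Hahn--Banach theorem, and the Krein--Milman theorem applied to the weak-$*$ compact convex set $S_{\le 1}(A)$ of positive functionals of norm at most one.

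For part (1), I would first show that every irreducible $\pi$ is a GNS representation. By Cohen--Hewitt (Theorem \ref{thm:Cohen-Hewitt}), $\pi$ is nondegenerate, so any unit vector $\xi$ is cyclic by irreducibility. Then $\phi(a)=\langle\xi|\pi(a)\xi\rangle$ is a state; checking $\|\phi\|=1$ uses an approximate unit. The map $a+J_\phi\mapsto\pi(a)\xi$ extends to a unitary $H_\phi\to H$ that intertwines the two representations.

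The core of part (1) is the equivalence
\[
\pi_\phi \text{ irreducible} \iff \phi \text{ pure}.
\]
Suppose $0\le\psi\le\phi$. Apply the Riesz representation theorem to the bounded sesquilinear form $(a,b)\mapsto\psi(a^*b)$ on $H_\phi$. This gives an operator $0\le T\le 1$ in the commutant $\pi_\phi(A)'$ with $\psi(a^*b)=\langle a|Tb\rangle$. Conversely, each such $T$ gives $\psi_T\le\phi$. So $\phi$ is pure exactly when this commutant is $\C 1$. Schur's lemma then identifies $\pi_\phi(A)'=\C1$ with irreducibility: a nontrivial self-adjoint element of the commutant has a nontrivial spectral projection, hence an invariant subspace. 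I expect this commutant/positive-functional dictionary to be the main technical step, and I would state the needed Schur lemma explicitly.

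For part (2), take $c=a-b\ne0$. Since $\|c^*c\|=\|c\|^2$, a Hahn--Banach argument gives a state $\phi$ with $\phi(c^*c)=\|c\|^2$. For self-adjoint $x$ this uses the commutative subalgebra generated by $x$, and the fact that characters are states. The set of positive functionals $\psi$ with $\psi(c^*c)=\|c\|^2\|\psi\|$ is a face of $S_{\le1}(A)$. By Krein--Milman it contains an extreme point $\phi\ne0$, which is a pure state. Then $\|\pi_\phi(c)(\text{approximate unit})\|^2\to\phi(c^*c)>0$, so $\pi_\phi(c)\ne0$. Part (1) shows $\pi_\phi$ is irreducible.

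For part (3), apply part (1) to write $\pi\cong\pi_\psi$ with $\psi$ a pure state of $B$. The set of states of $A$ (norm $\le 1$, positive) extending $\psi$ is nonempty by Hahn--Banach; positivity comes from the fact that a functional on a $C^*$-algebra with $\|\phi\|=\lim\phi(e_\lambda)$ is positive. This set is convex, weak-$*$ compact, and a face, because $\psi$ is pure and restriction preserves convex combinations. So it has an extreme point $\phi$, which is pure on $A$. Then $\pi'=\pi_\phi$ is irreducible by part (1). The map $b+J_\psi\mapsto b+J_\phi$ is isometric because $\phi(b^*b)=\psi(b^*b)$, so it embeds $H_\psi$ into $H_\phi$ as a $B$-invariant subspace on which $\pi'$ restricts to $\pi_\psi$. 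The remaining delicate point is the face property for states, which should reduce to checking norms via approximate units.
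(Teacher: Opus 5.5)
Your proposal is correct and follows the route the paper points to: the paper gives no proof, only attributing the theorem to the GNS construction together with the Hahn--Banach and Krein--Milman theorems, and your argument fills in exactly that standard route. One point to tighten is positivity of a Hahn--Banach extension from a subalgebra, needed in both (2) and (3). The criterion ``$\|\phi\|=\lim\phi(e_\lambda)$ implies $\phi\geq 0$'' is normally proved for an approximate unit of the algebra on which $\phi$ is defined, whereas for such an extension you only control an approximate unit of the subalgebra, so it is cleaner to first extend to the unital algebra $B+\C 1$ inside the unitization and then use the unital criterion $\|\rho\|=\rho(1)$.
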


\subsection{Aside: the  bicommutant theorem and irreducibility}

There is a characterisation of irreducible representations that relies on ideas from the theory of \emph{von Neumann algebras}. Von Neumann algebras will not be a focus of these notes, but the irreducibility condition will be very useful, so we briefly recall:

\begin{definition}
Let $H$ be a Hilbert space, and let $\Bounded(H)$ be the $C^*$-algebra of bounded linear operators on $H$.
\begin{enumerate}[\rm(1)]
\item The \emph{strong-operator topology} on $\Bounded(H)$ is the topology in which a net of operators $(t_i)_{i\in I}$ converges to an operator $t$ if and only if $t_i\xi$ converges to $t\xi$, in the norm topology on $H$, for every $\xi\in H$.
\item A \emph{von Neumann algebra} is a $C^*$-subalgebra of $\Bounded(H)$ that contains $\id_H$ and is closed in the strong operator topology.
\end{enumerate}
\end{definition}

Somewhat like the norm topology on a $C^*$-algebra, the condition of strong-operator closure has an algebraic aspect: 

\begin{theorem}[Von Neumann]
A $C^*$-algebra $C\subseteq \Bounded(H)$ is a von Neumann algebra if and only if $C=C''$, where the \emph{commutant} $X'$ of a set of operators $X$ is defined as
\[
X' = \{t\in \Bounded(H)\ |\ tx=xt\text{ for all }x\in X\},
\]
and the \emph{bicommutant} $X''$ is defined as $(X')'$.\hfill\qed
\end{theorem}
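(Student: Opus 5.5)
The plan is to prove both directions of von Neumann's bicommutant theorem, with the density theorem as the key step. Note that the statement as given requires $C$ to be a von Neumann algebra, which by definition contains $\id_H$. So the intended reading is: a $C^*$-subalgebra $C\subseteq\Bounded(H)$ containing $\id_H$ is strong-operator closed if and only if $C=C''$. I will work with that unital hypothesis; it is genuinely needed, since for a non-unital $C$ the equality $C=C''$ fails.

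\emph{Easy direction.} For any set $X$ of operators, $X'$ is strong-operator closed. If $t_i\to t$ strongly and $t_ix=xt_i$, then for each $\xi$ we have $t x\xi=\lim t_i x\xi=\lim x t_i\xi = x t\xi$, using continuity of $x$. Hence $C''=(C')'$ is strong-operator closed. Together with the observation that $C''$ is a unital $*$-closed algebra, this shows: if $C=C''$, then $C$ is a von Neumann algebra.

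\emph{Hard direction.} Always $C\subseteq C''$, so it suffices to show that $C''$ lies in the strong closure of $C$. Take $t\in C''$, finitely many vectors $\xi_1,\dots,\xi_n\in H$, and $\epsilon>0$. I want $c\in C$ with $\|(c-t)\xi_k\|<\epsilon$ for all $k$.

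First treat $n=1$. Let $K=\overline{C\xi}$ and let $p$ be the orthogonal projection onto $K$. Because $C$ is $*$-closed, $K^\perp$ is also $C$-invariant, so $p$ commutes with $C$, i.e.\ $p\in C'$. Since $t\in C''$, $t$ commutes with $p$, so $tK\subseteq K$. Because $\id_H\in C$, we have $\xi\in K$, and therefore $t\xi\in K=\overline{C\xi}$. This gives the required approximation $c\xi\approx t\xi$.

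For general $n$ I use the amplification trick. Let $C$ act diagonally on $H^n$, so $c\mapsto c\otimes \id_{\C^n}$. The commutant of the diagonal copy of $C$ is $\Mat_n(C')$; its commutant in turn consists of the diagonal operators $d\otimes\id_{\C^n}$ with $d\in C''$. Checking these two commutant identities by matrix-unit computations is the main (though routine) step. Once they hold, $t\otimes\id_{\C^n}$ lies in the bicommutant of the diagonal copy of $C$. Applying the $n=1$ case to the vector $(\xi_1,\dots,\xi_n)\in H^n$ then gives $c\in C$ with $\sum_k\|(c-t)\xi_k\|^2<\epsilon^2$.

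Since $C$ is strong-operator closed, this places $t$ in $C$, so $C''\subseteq C$ and hence $C=C''$.
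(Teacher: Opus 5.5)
The paper does not prove this theorem: it is quoted as a standard result and referred to Dixmier's book on von Neumann algebras, so there is no argument of the paper's to compare yours against. Your proof is the standard one via von Neumann's density theorem, and it is correct.

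The key steps all hold. The commutant of any set is strong-operator closed, and $C''$ is a unital $*$-algebra. The projection onto $\overline{C\xi}$ lies in $C'$ because $C$ is self-adjoint. For the amplification you only need the inclusion $\Delta(C)'\subseteq \Mat_n(C')$, which follows from entrywise commutation with $\operatorname{diag}(c,\dots,c)$; that inclusion already puts $t\otimes\id_{\C^n}$ in $\Delta(C)''$. Your treatment of the unit also loses nothing. A von Neumann algebra contains $\id_H$ by definition, and $C=C''$ forces $\id_H\in C$, so your unital reformulation proves exactly the stated equivalence.

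One remark makes your argument more useful for these notes. The hypothesis $\id_H\in C$ enters only to guarantee $\xi\in\overline{C\xi}$, and that also holds whenever $C$ acts nondegenerately, i.e.\ $\overline{\lspan}\, CH=H$. To see this, let $p\in C'$ be your projection onto $\overline{C\xi}$. Then $c(\xi-p\xi)=c\xi-pc\xi=0$ for all $c\in C$. Hence $\langle \xi-p\xi\,|\,c\zeta\rangle=\langle c^*(\xi-p\xi)\,|\,\zeta\rangle=0$, so $\xi-p\xi\perp CH$ and therefore $\xi=p\xi$. The diagonal amplification of a nondegenerate algebra is again nondegenerate. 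So your proof actually shows that every nondegenerate $*$-subalgebra of $\Bounded(H)$ is strong-operator dense in its bicommutant.

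That is the form needed in Theorem \ref{thm:SvN} and in the Stone--Weierstrass argument for $\Compact(H)$. There the algebras $\pi(A)$ and $B\subseteq\Compact(H)$ typically do not contain $\id_H$, and irreducibility supplies the nondegeneracy.
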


It is tautologically true that $X'=X'''$, and so the commutant $C'$ of a $C^*$-algebra $C\subseteq \Bounded(H)$ is a von Neumann algebra. See \cite{Dixmier-vN}, for example, for more information about von Neumann algebras.

Here is how we will use this theorem:

\begin{theorem}\label{thm:SvN}
A representation $\pi:A\to \Bounded(H)$ of a $C^*$-algebra $A$ is irreducible if and only if one of the following equivalent conditions holds:
\begin{enumerate}[\rm(1)]
\item $\pi(A)' = \C \id_H$.
\item $\pi(A)$ is dense in $\Bounded(H)$ with respect to the strong-operator topology.
\end{enumerate}
\end{theorem}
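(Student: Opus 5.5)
The plan is to prove the cycle of implications: irreducible $\Rightarrow$ (1) $\Rightarrow$ (2) $\Rightarrow$ irreducible.

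\emph{Irreducible $\Rightarrow$ (1).} The commutant $\pi(A)'$ is a $C^*$-subalgebra of $\Bounded(H)$: it is closed under adjoints because $\pi(A)$ is self-adjoint. So it suffices to show that every self-adjoint $t\in\pi(A)'$ is a scalar. For such $t$, each spectral projection $p=1_E(t)$ commutes with everything that commutes with $t$. In particular $p$ commutes with $\pi(A)$, so $pH$ is a closed $A$-invariant subspace. Irreducibility then forces $p\in\{0,\id_H\}$ for every Borel set $E$, so the spectral measure of $t$ is concentrated at a single point and $t$ is a scalar. If one prefers to avoid the Borel functional calculus, the same argument can be run with the von Neumann algebra $\pi(A)'$ directly: it is generated by its projections, and every projection in $\pi(A)'$ commutes with $\pi(A)$ and hence has $A$-invariant range.

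\emph{(1) $\Rightarrow$ (2).} Let $C$ be the norm closure of $\pi(A)+\C\id_H$, a unital $C^*$-subalgebra of $\Bounded(H)$ with $C'=\pi(A)'=\C\id_H$. Von Neumann's theorem identifies $C''$ with the strong-operator closure of $C$, and $C''=(\C\id_H)'=\Bounded(H)$. Hence $\pi(A)+\C\id_H$ is strongly dense in $\Bounded(H)$. To remove the adjoined identity, use Cohen--Hewitt (Theorem~\ref{thm:Cohen-Hewitt}) or an approximate unit, which shows that $\id_H$ itself lies in the strong closure of $\pi(A)$. This requires $\pi$ to be nondegenerate.

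This nondegeneracy point is the main obstacle. If $\overline{\pi(A)H}\neq H$, then the projection onto its orthogonal complement is a nonscalar element of $\pi(A)'$, unless $\pi=0$ and $\dim H=1$. That degenerate case must be excluded; here the paper's convention $H\neq0$ together with nonzero $\pi$ is what is needed. Otherwise (1) gives nondegeneracy, so an approximate unit $(e_i)$ of $A$ satisfies $\pi(e_i)\to\id_H$ strongly. Then for any $x$ in the strong closure of $\pi(A)+\C\id_H$, the products $\pi(e_i)\,(\pi(a)+\lambda)$ lie in $\pi(A)$ and approximate $x$ strongly, with a standard $\epsilon/3$ argument on finitely many vectors.

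\emph{(2) $\Rightarrow$ irreducible.} If $K\subseteq H$ is a closed $A$-invariant subspace, then $K$ is invariant under every strong-operator limit of elements of $\pi(A)$, hence under all of $\Bounded(H)$. A closed subspace invariant under $\Bounded(H)$ is either $0$ or $H$, because rank-one operators move any nonzero vector to any other vector. Combined with $H\neq0$, this gives irreducibility.
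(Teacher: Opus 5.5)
Your proof is correct and uses the same ingredients as the paper's outline. Schur's lemma via projections in $\pi(A)'$ gives irreducibility $\Rightarrow$ (1), and the bicommutant theorem gives (1) $\Rightarrow$ (2); you simply arrange these as a cycle and close it with a direct invariant-subspace argument for (2) $\Rightarrow$ irreducibility.

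Your extra step, removing the adjoined identity via an approximate unit, makes explicit the nondegeneracy that the paper's one-line appeal to the bicommutant theorem silently uses. You are also right that the degenerate cases must be excluded. Under the paper's literal definition, which only demands $H\neq 0$, the zero representation on $H=\C$ is irreducible and satisfies (1) but not (2). That is a flaw in the statement, not in your argument.
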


\begin{proof}[Proof (outline)] 
The equivalence of (1) and (2) follows immediately from the bicommutant theorem, because $(\C \id_H)'= \Bounded(H)$ and $\Bounded(H)'=\C \id_H$. The equivalence between condition (1) and the irreducibility of $\pi$ is a version of Schur's lemma. It follows from the observation that the closed invariant subspaces for $\pi$ are in one-to-one correspondence with the orthogonal projections in $\pi(A)'$, and from the fact that every von Neumann algebra is densely spanned by its projections. (The latter fact follows from the spectral theorem for self-adjoint bounded operators: every such operator $c$ can be approximated by finite linear combinations of spectral projections; and those spectral projections are strong-operator limits of polynomials in $c$.)
\end{proof}

\subsection{Representations and ideals}

The relationship between the representations of a $C^*$-algebra $A$, and the representations of the ideals and quotients of $A$, is rather straightforward:

\begin{theorem}\label{thm:ideal-irreps}
Let $J$ be an ideal in a $C^*$-algebra $A$.
\begin{enumerate}[\rm(1)]
\item If $\pi:A\to \Bounded(H)$ is an irreducible representation of $A$, then the restriction $\pi\restrict_J:J\to \Bounded(H)$ is either an irreducible representation, or the zero map.
\item If $\rho:J\to \Bounded(H)$ is an irreducible representation, then there is an irreducible representation $\pi:A\to \Bounded(H)$ with $\rho=\pi\restrict_J$.
\end{enumerate}
Thus the irreducible representations of $J$ are precisely the restrictions to $J$ of those irreducible representations of $A$ that do not annihilate $J$.
\begin{enumerate}
\item[\rm(3)] If $\pi$ and $\rho$ are irreducible representations of $A$ that do not annihilate $J$, then $\pi$ and $\rho$ are equivalent as representations of $A$ if and only if they are equivalent as representations of $J$.
\item[\rm(4)] The map $\pi\mapsto \pi\restrict_J$ gives a homeomorphism $O_J\xrightarrow{\cong}\widehat{J}$.
\item[\rm(5)] If $\pi:A\to \Bounded(H)$ is an irreducible representation that \emph{does} annihilate $J$, then the map $\widetilde{\pi}:A/J\to \Bounded(H)$, $a+J\mapsto \pi(a)$ is an irreducible representation of $A/J$; every irreducible representation of $A/J$ has this form; and two irreducible representations $\pi,\rho$ of $A$ that annihilate $J$ are equivalent if and only if $\widetilde{\pi}$ and $\widetilde{\rho}$ are equivalent representations of $A/J$.
\item[\rm(6)] The map $\pi\mapsto\widetilde{\pi}$ gives a homeomorphism $\widehat{A}\setminus O_J \xrightarrow{\cong} \widehat{A/J}$.
\item[\rm(7)] $J=A$ if and only if no irreducible representation of $A$ annihilates $J$.
\item[\rm(8)] If $J\neq A$ then $\displaystyle J = \bigcap_{\pi\in \widehat{A},\,  \pi(J)=0} \ker \pi$.
\end{enumerate}
\end{theorem}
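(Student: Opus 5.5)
The plan is to prove the eight parts in order, using Cohen--Hewitt factorisation (Theorem \ref{thm:Cohen-Hewitt}), the commutant criterion for irreducibility (Theorem \ref{thm:SvN}), and the extension and separation results of Theorem \ref{thm:reps}.

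\textbf{Part (1).} Suppose $\pi|_J\neq 0$. The closed span $\overline{\pi(J)H}$ is $A$-invariant, because $\pi(a)\pi(j)\xi=\pi(aj)\xi$ and $aj\in J$. It is nonzero, so by irreducibility it equals $H$. If $K\subseteq H$ is a closed $J$-invariant subspace, I would show that $\overline{\pi(J)K}$ is $A$-invariant by the same computation. By Cohen--Hewitt applied to $K$ as a $J$-module, $\overline{\pi(J)K}=K$ when $\pi(J)K\neq 0$. If instead $\pi(J)K=0$, then for $k\in K$ and $\eta\in H$ we have $\langle \pi(j^*)\eta\,|\,k\rangle=\langle\eta\,|\,\pi(j)k\rangle=0$, so $K\perp \pi(J)H$. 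Since $\pi(J)H$ is dense in $H$, this forces $K=0$. Hence $K$ is $0$ or $H$, and $\pi|_J$ is irreducible.

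\textbf{Part (2).} Given an irreducible $\rho$ of $J$, Cohen--Hewitt gives $H=\rho(J)H$. I would define $\pi(a)\rho(j)\xi=\rho(aj)\xi$. The key step is well-definedness and boundedness. Let $(e_i)$ be an approximate unit for $J$, which exists as a standard fact I take for granted, or which can be built from Cohen--Hewitt. Then $\pi(a)=\lim_i\rho(ae_i)$ strongly on vectors $\rho(j)\xi$, since $ae_ij\to aj$. This limit does not depend on the representation $\rho(j)\xi$ of a vector, and it is bounded by $\|a\|$ on the dense span, so it extends to all of $H$. Multiplicativity and the $*$-property pass to the limit. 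Irreducibility of $\pi$ follows because $\pi(J)=\rho(J)$ already has trivial commutant. The same strong-limit formula shows the extension is unique.

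\textbf{Part (3).} I would show that an intertwiner $u$ for the restrictions to $J$ also intertwines $\pi(a)$. This holds because $\pi(a)$ is the strong limit of $\pi(ae_i)$, and $ae_i\in J$. The converse direction is trivial.

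\textbf{Parts (4)--(6).} Part (4): the map is a bijection by (1)--(3). For the homeomorphism I would match open sets. An ideal $I$ of $J$ is an ideal of $A$, since $AIJ\subseteq I$ follows from $I=IJI$, which comes from Cohen--Hewitt and the $*$-closure of $I$. Conversely, if $I$ is an ideal of $A$, then $I\cap J$ is an ideal of $J$, and $O_I\cap O_J=O_{I\cap J}$. Part (5) is purely formal. Part (6) uses the correspondence between ideals of $A/J$ and ideals of $A$ containing $J$.

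\textbf{Parts (7) and (8).} These are the real obstacle. For (8), let $I=\bigcap_{\pi(J)=0}\ker\pi\supseteq J$, and suppose $a\in I\setminus J$. I would apply Theorem \ref{thm:reps}(2) to $A/J$ to get an irreducible $\sigma$ of $A/J$ with $\sigma(a+J)\neq0$. By (5), $\sigma$ comes from some $\pi\in\widehat{A}$ annihilating $J$ with $\pi(a)\neq0$, which contradicts $a\in I$. Then (7) follows by the same argument applied to $A/J$: if $J\neq A$, then $A/J\neq 0$ has an irreducible representation, which gives some $\pi$ annihilating $J$.
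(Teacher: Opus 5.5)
Your proof is correct in substance, but one step in (1) is mis-justified. Theorem \ref{thm:Cohen-Hewitt} only says that $\pi(J)K$ is a closed $J$-submodule of $K$; it does not say that it equals $K$. The fix is the paper's observation. If $\pi(J)K\neq 0$, then $\pi(J)K$ is a nonzero closed $A$-invariant subspace, so it equals $H$ by irreducibility of $\pi$. Since $\pi(J)K\subseteq K$, this gives $K=H$. Your orthogonality argument for the case $\pi(J)K=0$ is a fine substitute for the paper's null space $\{\xi\in H: \pi(J)\xi=0\}$.

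Beyond (1), your route differs from the paper's in several places.

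For (2), the paper constructs nothing. It applies Theorem \ref{thm:reps}(3) to get an irreducible representation of $A$ on some $H'\supseteq H$ extending $\rho$. It then uses (1) to see that $H$, being a nonzero closed $J$-invariant subspace, is all of $H'$. Your explicit extension $\pi(a)\rho(j)\xi=\rho(aj)\xi$ avoids the pure-state-extension machinery behind Theorem \ref{thm:reps}(3) and gives uniqueness of the extension. The cost is importing approximate units, which the notes never introduce. Correspondingly, the paper's (3) uses factorisation alone: writing $\xi=\rho(j)\eta$ gives $u\rho(a)\xi=\pi(aj)u\eta=\pi(a)u\xi$. You use the strong limit $\pi(a)=\lim_i\pi(ae_i)$ instead.

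For (4), the paper proves $O_{J'}\subseteq O_J\iff J'\subseteq J$ by extending a representation of $J'/(J'\cap J)$ to $A/J$. You instead use $O_I\cap O_J=O_{I\cap J}$, together with the fact that ideals of $J$ are ideals of $A$. The identity holds because $\pi(I)\pi(J)H\neq 0$ whenever $\pi(I)\neq 0\neq\pi(J)$ (as $\pi(J)H$ is dense), and $IJ\subseteq I\cap J$. The paper uses the fact about ideals of $J$ tacitly, and your factorisation argument is the right justification. However, ``$AIJ\subseteq I$'' should read $AI=A(JI)\subseteq JI\subseteq I$, using $I=II\subseteq JI$.

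For (8), the paper applies (7) to $J$ viewed as an ideal of the intersection $K$, and uses (2) to extend irreducible representations of $K$ to $A$. Your argument separates $a+J$ from $0$ in $A/J$ via Theorem \ref{thm:reps}(2) and pulls back via (5). It is shorter and more direct, and (7) follows from the same idea.
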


\begin{exercise}\label{ex:representation-proofs}
Use the results about representations of $C^*$-algebras that have been cited previously to prove Theorem \ref{thm:ideal-irreps}.
\end{exercise}

\subsection{Irreducible representations of $C^*_r(G)$}

\begin{definition}
\begin{enumerate}[\rm(1)]
\item Let $G$ be a locally compact topological group. A \emph{unitary representation} of $G$ is a homomorphism $\pi:G\to \Unitary(H)$ from $G$ to the group of unitary operators on some Hilbert space $H$, that is continuous for the strong-operator topology on $\Unitary(H)$. (That is, for each $\xi\in H$ the function $G\to H$, $g\mapsto \pi(g)\xi$ should be continuous with respect to the norm on $H$.)
\item A unitary representation $\pi:G\to \Unitary(H)$ is \emph{irreducible} if $H$ has no nonzero, proper, $G$-invariant closed subspaces.
\item Two unitary representations $\pi_i:G\to \Unitary(H_i)$ ($i=1,2$) are \emph{equivalent} if there is a unitary operator $u:H_1\to H_2$ satisfying $u\pi_1(g)=\pi_2(g)u$ for all $g\in G$.
\item The set of equivalence classes of irreducible unitary representations of $G$ is denoted by $\widehat{G}$.
\end{enumerate}
\end{definition}

\begin{example}
For any group $G$ we have the \emph{left-regular representation} $\lambda:G\to\Unitary(L^2(G))$ (where the $L^2$ space is with respect to a left-invariant Haar measure), given by $(\lambda(g)\xi)(h)=\xi(g^{-1}h)$ for $\xi\in L^2 (G)$ and $g,h\in G$.
\end{example}

For each unitary representation $\pi:G\to \Unitary(H)$, and each $f\in C_c(G)$, the function
\[
F(\xi,\eta)= \int_G \langle \xi\ |\ f(g)\pi(g)\eta\rangle\, \dd g
\]
defines a bounded sesquilinear form on $H$, and so the Riesz representation theorem implies that there is an operator $\widetilde{\pi}(f)\in \Bounded(H)$ with $F(\xi,\eta)=\langle\xi\, |\, \widetilde{\pi}(f)\eta\rangle$. A straightforward integral-juggling argument shows that $\widetilde{\pi}(f_1\ast f_2) = \widetilde{\pi}(f_1)\widetilde{\pi}(f_2)$, where $\ast$ denotes convolution with respect to Haar measure. (See \cite[13.3]{Dixmier-Cstar-fr}, for instance, for details.) The map $\widetilde{\pi}:C_c(G)\to \Bounded(H)$  is sometimes called the \emph{integrated form} of $\pi$. We usually drop the $\sim$ and just denote both maps by the same symbol, $\pi$. We have already seen an example of this: we write $\lambda$ both for the regular representation $G\to \Unitary(L^2(G))$, and for the induced map $C_c(G)\to \Bounded(L^2(G))$ sending $f\in C_c(G)$ to the convolution operator $\lambda(f)$.

\begin{definition}
We denote by $\widehat{G}_r$ the set of equivalence classes of irreducible unitary representations $\pi$ of $G$ with the property that $\|\widetilde{\pi}(f)\|\leq \|\lambda(f)\|$ for all $f\in C_c(G)$.
\end{definition}

\begin{examples}
\begin{enumerate}[\rm(1)]
\item Let $G$ be a compact group. The Schur orthogonality relations imply that every irreducible representation of $G$ is isomorphic to some $G$-invariant subspace of the regular representation $L^2(G)$. (See Appendix \ref{sec:compact-groups} for reminders about representations of compact groups.) Thus for each $\pi\in \widehat{G}$, and each $f\in C(G)$, the operator $\pi(f)$ is (up to isomorphism) the restriction of the operator $\lambda(f)$ to an invariant subspace, and so $\|\pi(f)\|\leq \|\lambda(f)\|$. Thus $\widehat{G}=\widehat{G}_r$.
\item Let $G=\R$ with addition. Schur's lemma implies that every irreducible representation of $\R$ is one-dimensional, \emph{i.e.}~a continuous homomorphism $\R\to \Unitary(\C)$, and it is not difficult to show that every such homomorphism has the form $\pi_y:x\mapsto e^{ixy}$ for some $y\in \R$. The integrated form of $\pi_y$ is the map $\pi_y:C_c(\R)\to \C$, $\pi_y(f) = \int_{\R} f(x)e^{ixy}\, \dd x$, which we recognise as (one possible normalisation of) the Fourier transform $\hat{f}(y)$. On the other hand, $\lambda(f)$ is the operator on $L^2(\R)$ given by convolution with $f$. Conjugating by the (unitary) Fourier transform on $L^2(\R)$ transforms this operator into that of pointwise multiplication by $\widehat{f}$, whose norm is $\sup_{x\in \R}|\hat{f}(x)|$. So $|\pi_y(f)|\leq \|\lambda(f)\|$, and we conclude that $\pi_y\in \widehat{\R}_r$ for every $y\in \R$. Thus $\widehat{\R}=\widehat{\R}_r$. The same argument applies to every abelian group.
\item For most real reductive groups we have $\widehat{G}\neq \widehat{G}_r$. For example, the trivial one-dimensional representation $\SL(2,\R)\to \Unitary(\C)$ is not in $\widehat{\SL(2,\R)}_r$. In these notes we will not attempt to explain why, as a full discussion of this topic (\emph{amenability}) would require its own course of lectures.  Instead, we will just quote a theorem, due to Harish-Chandra and Cowling--Haagerup--Howe \cite{CHH}, that identifies the set $\widehat{G}_r$, for the groups $G$ that are our main focus in these notes.
\end{enumerate}
\end{examples}

\begin{theorem}\label{thm:tempered}
Let $G$ be a real reductive group. The irreducible unitary representations of $G$ that lie in $\widehat{G}_r$ are precisely the \emph{tempered} representations, and these are precisely those whose matrix-coefficient functions $g\mapsto \langle \pi(g)\xi\, |\, \eta\rangle$ are of class $L^{2+\epsilon}$ modulo the centre of $G$.
\end{theorem}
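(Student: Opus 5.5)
The plan is to go through the notion of \emph{weak containment}, which sits between the norm condition defining $\widehat{G}_r$ and the integrability condition on matrix coefficients. First I would reformulate membership in $\widehat{G}_r$. The inequality $\|\pi(f)\|\leq\|\lambda(f)\|$ for all $f\in C_c(G)$ says exactly that $\pi$ factors through $C^*_r(G)$. Using the GNS construction (Definition \ref{def:states}) and Theorem \ref{thm:reps}, this should be equivalent to Fell's criterion: every diagonal coefficient $g\mapsto\langle\pi(g)\xi\,|\,\xi\rangle$ is a limit, uniformly on compact subsets of $G$, of finite sums of diagonal coefficients of $\lambda$. For irreducible $\pi$ it should suffice to check this for a single nonzero (hence cyclic) vector $\xi$, since the set of states of $C^*_r(G)$ is weak-$*$ closed. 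This reduction is general and uses only the $C^*$-machinery already cited.

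Next I would bring in Harish-Chandra's spherical function $\Xi$ on $G$ and quote two of its analytic properties. The first is the integrability statement $\int_{G/Z}\Xi(g)^{2+\epsilon}\,\dd g<\infty$ for every $\epsilon>0$, which comes from the Cartan decomposition $G=KA^+K$ together with the estimate $\Xi(a)\lesssim a^{-\rho}(1+\|\log a\|)^d$. The second is that $\Xi$ is a coefficient of an induced representation $\Ind_P^G(1)$ from a minimal parabolic, and that this representation is weakly contained in $\lambda$ (it is induced from an amenable group).

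For the implication ``$L^{2+\epsilon}$ coefficients $\Rightarrow$ tempered'' I would follow Cowling--Haagerup--Howe. Take a $K$-finite unit vector $\xi$ whose coefficient $\phi$ lies in $L^{2+\epsilon}$. Using the positive-definite function $\phi^{k}\bar\phi^{k}$ and the tensor power trick, one sees that the coefficients of $\pi^{\otimes n}\otimes\bar\pi^{\otimes n}$ lie in $L^2$ for large $n$, hence are weakly contained in $\lambda$. The spectral radius formula then gives $\|\pi(f)\|\le\lim_n\|\lambda(f)\|^{\cdots}$, and one ends with $\|\pi(f)\|\leq\|\lambda(f)\|$. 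Modding out by the centre requires applying the same argument to $G/Z$ with a unitary character twist.

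For the converse, from $\pi\in\widehat{G}_r$ I want the bound $|\langle\pi(g)\xi\,|\,\eta\rangle|\leq C\,\Xi(g)$ for $K$-finite $\xi,\eta$. The approach is to approximate by coefficients of $\lambda$ using Fell's criterion, and to control coefficients of $\lambda$ between $K$-finite vectors by $\Xi$ via the Herz majorisation principle and the Kunze--Stein phenomenon. The integrability of $\Xi^{2+\epsilon}$ then finishes the argument.

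I expect the main obstacle to be the converse direction. Uniform-on-compacta approximation does not by itself give a \emph{global} pointwise bound, so one needs a genuinely uniform estimate on coefficients of tempered $K$-finite vectors. This is essentially Harish-Chandra's theory, which I would quote rather than reprove, in keeping with the notes' stance on representation-theoretic input.
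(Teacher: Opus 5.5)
You are right to follow the Cowling--Haagerup--Howe route. The paper itself gives no proof of this statement; it simply quotes it from Harish-Chandra and CHH. Your reformulation via Fell's criterion is sound, and so is your plan for the converse. The forward direction (``$L^{2+\epsilon}$ coefficients $\Rightarrow \pi\in\widehat{G}_r$''), however, has a genuine gap at the tensor-power/spectral-radius step.

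First, $\pi^{\otimes n}\otimes\bar\pi^{\otimes n}$ is an \emph{inner} tensor product of representations of $G$. Its integrated form at $f$ is therefore not $\pi(f)^{\otimes n}\otimes\overline{\pi(f)}^{\otimes n}$, and no spectral radius formula links $\|(\pi^{\otimes n}\otimes\bar\pi^{\otimes n})(f)\|$ to $\|\pi(f)\|$.

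More decisively, the step uses $\phi\in L^p$ for only one fixed $p>2$ (and then $|\phi|^2\in L^2$ already, so ``large $n$'' buys nothing). No argument of that kind can succeed. Take $G=\SL(2,\R)$ and the complementary series $\pi_\nu$ with $0<\nu<\tfrac12$. Their $K$-finite coefficients are $O(e^{(\nu-1)t})$ at $a_t=\mathrm{diag}(e^t,e^{-t})$, so they lie in $L^p$ for some $p\in(2,4)$. Hence $\pi_\nu\otimes\bar\pi_\nu$ has square-integrable coefficients on a dense subspace and is contained in a multiple of $\lambda$, yet $\pi_\nu\notin\widehat{G}_r$. So the hypothesis ``for \emph{every} $\epsilon>0$'' must enter the proof essentially.

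The standard mechanism, as in CHH, is a direct estimate that needs no structure theory. Fix $f\in C_c(G)$ and put $h=f^*\ast f$ and $S=\operatorname{supp} h$. Let $\xi$ be a unit vector whose coefficient $\phi$ lies in $L^{2+\epsilon}$ for every $\epsilon$, and let $q$ be the exponent conjugate to $2+\epsilon$. Jensen's inequality for the spectral measure of $\pi(h)\geq 0$, H\"older's inequality twice, $\operatorname{supp}(h^{\ast n})\subseteq S^n$, and $h^{\ast n}=\lambda(h)^{n-1}h$ give
\[
\|\pi(f)\xi\|^{2n}\leq \langle \xi\,|\,\pi(h^{\ast n})\xi\rangle\leq \|\phi\|_{2+\epsilon}\,\|h^{\ast n}\|_q\leq \|\phi\|_{2+\epsilon}\,\operatorname{vol}(S^n)^{\frac1q-\frac12}\,\|\lambda(h)\|^{n-1}\,\|h\|_2 ,
\]
together with $\operatorname{vol}(S^n)\leq c\,C^n$, where $c,C$ depend only on $S$. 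Taking $n$th roots and letting $n\to\infty$ yields $\|\pi(f)\xi\|^2\leq C^{\frac1q-\frac12}\|\lambda(f)\|^2$. Only then does letting $\epsilon\to0$ (so $q\to 2$) remove the constant. Density of such $\xi$ then gives $\|\pi(f)\|\leq\|\lambda(f)\|$. Handle the centre by passing to the part of $G$ with compact centre.

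Separately, the obstacle you flag in the converse is not real. Project the approximating $\lambda$-coefficients onto the relevant $K$-types by convolving with $e_F$ on both sides; this preserves uniform convergence on compacta. Each approximant is then bounded by $C_F\,\Xi$, with $C_F$ independent of the approximant, by Herz's majorisation. A uniform pointwise bound survives pointwise limits, so the Herz estimate is the whole content of that direction.
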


Here is why $\widehat{G}_r$ is important for our computation of $C^*_r(G)$ (and vice versa): 

\begin{lemma}\label{lem:Ghat-CstarGhat}
Let $G$ be a locally compact group. The map sending each irreducible unitary representation $\pi$ of $G$ to the integrated form $\pi:C_c(G)\to \Bounded(H_\pi)$ induces a bijection $\widehat{G}_r \cong \widehat{C^*_r(G)}$.
\end{lemma}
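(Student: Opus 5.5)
The plan is to construct maps in both directions and check that they are mutually inverse on equivalence classes. Three standard facts about a locally compact group $G$ will be used throughout. First, the integrated form of a unitary representation is a $*$-homomorphism $C_c(G)\to\Bounded(H)$, contractive for the $L^1$ norm. Second, it is nondegenerate, since approximate identities $f_i\in C_c(G)$ supported in shrinking neighbourhoods of $e$ satisfy $\pi(f_i)\xi\to\xi$. Third, one recovers $\pi(g)$ from the integrated form by $\pi(g)\pi(f)=\pi(L_gf)$, where $L_gf(h)=f(g^{-1}h)$.

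Forward direction. Let $\pi\in\widehat{G}_r$. The inequality $\|\pi(f)\|\le\|\lambda(f)\|$ says that $\lambda(f)\mapsto\pi(f)$ is a well-defined contractive $*$-homomorphism on $\lambda(C_c(G))$. It is well defined because $\lambda(f)=0$ forces $\pi(f)=0$. It therefore extends by continuity to a representation $\pi:C^*_r(G)\to\Bounded(H_\pi)$. To show this extension is irreducible, I use Theorem~\ref{thm:SvN}(1). If $t$ commutes with every $\pi(f)$, then $t\pi(g)\pi(f)=t\pi(L_gf)=\pi(L_gf)t=\pi(g)\pi(f)t=\pi(g)t\pi(f)$. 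By nondegeneracy this gives $t\pi(g)=\pi(g)t$. Irreducibility of $\pi$ as a group representation, together with Schur's lemma (again Theorem~\ref{thm:SvN} applied to the von Neumann algebra generated by $\pi(G)$), gives $t\in\C\,\id$. The same computation shows that a unitary intertwines the integrated forms if and only if it intertwines the group representations. Hence the map $\widehat{G}_r\to\widehat{C^*_r(G)}$ is well defined and injective.

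Reverse direction. This is the main obstacle: every irreducible representation $\rho$ of $C^*_r(G)$ must come from a unitary representation of $G$. The key point is that $G$ acts on $C^*_r(G)$ by left multipliers, $\lambda(g)\lambda(f)=\lambda(L_gf)$, and these multipliers are isometric. The construction goes as follows.
\begin{enumerate}[\rm(1)]
\item Use Cohen--Hewitt (Theorem~\ref{thm:Cohen-Hewitt}) to write every vector of $H_\rho$ as $\rho(a)\xi$.
\item Define $\pi(g)\rho(a)\xi\coloneqq\rho(\lambda(g)a)\xi$, where $\lambda(g)a\in C^*_r(G)$ since $\lambda(g)$ multiplies $C^*_r(G)$ into itself.
\item Check that this is well defined and unitary via
\[
\Bigl\langle \sum_i\rho(\lambda(g)a_i)\xi_i\ \Big|\ \sum_j\rho(\lambda(g)a_j)\xi_j\Bigr\rangle=\sum_{i,j}\langle\xi_i\,|\,\rho(a_i^*\lambda(g)^*\lambda(g)a_j)\xi_j\rangle,
\]
and $\lambda(g)^*\lambda(g)=1$, so the right-hand side equals $\bigl\|\sum_i\rho(a_i)\xi_i\bigr\|^2$. (Equivalently, extend $\rho$ to the multiplier algebra.)
\end{enumerate}
The homomorphism property is immediate from the construction. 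For strong continuity, I use that $g\mapsto L_gf$ is continuous in $L^1$, hence in the $C^*_r$ norm, and then pass to a dense set of vectors $\rho(\lambda(f))\xi$.

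Finally, I check that the integrated form of this $\pi$ is $\rho$ on $\lambda(C_c(G))$. The identity $\int f(g)\lambda(g)\lambda(h)\,\dd g=\lambda(f\ast h)$ holds, with the integral converging in norm, so $\pi(f)\rho(\lambda(h))\xi=\rho(\lambda(f)\lambda(h))\xi=\rho(\lambda(f))\rho(\lambda(h))\xi$. This gives $\|\pi(f)\|=\|\rho(\lambda(f))\|\le\|\lambda(f)\|$, so $\pi$ is weakly contained in $\lambda$. Moreover $\pi$ is irreducible, because its commutant equals $\rho(C^*_r(G))'=\C$ by the same computation as before. Thus $\pi\in\widehat{G}_r$ and it maps to $\rho$, which proves surjectivity.
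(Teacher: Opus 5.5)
Your proposal is correct and follows essentially the same route as the paper's outline. The shared steps are extension of the integrated form by continuity using the defining inequality of $\widehat{G}_r$, an approximation (nondegeneracy) argument showing that intertwiners for $G$ and for $C_c(G)$ coincide, and surjectivity via the isometric multipliers $\lambda(g)$ on $C^*_r(G)$ combined with Cohen--Hewitt factorisation. You also spell out the checks the paper leaves implicit: irreducibility of the extension, well-definedness and unitarity of $\pi(g)$, strong continuity, and that the integrated form of the constructed $\pi$ recovers $\rho$.
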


\begin{proof}[Proof (outline)]
The definition of $\widehat{G}_r$ ensures that the integrated form of an irreducible unitary representation $\pi$ extends by continuity to $C^*_r(G)$ if and only if $\pi\in \widehat{G}_r$. An approximation argument shows that for unitary representations $\pi_1,\pi_2$ of $G$,  a unitary operator $u:H_1\to H_2$ satisfies $u\pi_1(g)=\pi_2(g)u$ for all $g\in G$ if and only if the integrated forms  satisfy $u \pi_1(f)=\pi_2(f) u$ for every $f\in C_c(G)$. Thus the map sending each irreducible unitary representation of $G$ to its integrated form induces a well-defined injective map $\widehat{G}_r\to \widehat{C^*_r(G)}$. To prove that this map is surjective we first observe that the left-translation action $\lambda$ of $G$ on $C_c(G)$ is isometric for the norm $\|\lambda(f)\|$, and therefore extends to give an isometric action of $G$ on $C^*_r(G)$ (not by $C^*$-algebra automorphisms, but by so-called \emph{unitary multipliers}). Now, given an irreducible representation $\pi:C^*_r(G)\to\Bounded(H)$, we obtain a unitary representation $G\to \Unitary(H)$ whose integrated form is $\pi$ by identifying $H = \{\pi(f)\xi\ |\ f\in C^*_r(G),\, \xi\in H\}$ (Theorem \ref{thm:Cohen-Hewitt}), and then setting $\pi(g)\left(\pi(f)\xi\right)\coloneqq \pi(\lambda(g)f)\xi$. 
\end{proof}

\begin{remark}
The topology on $\widehat{C^*_r(G)}$ described in Definition \ref{def:Cstar-reps} induces, via the bijection $\widehat{G}_r\cong \widehat{C^*_r(G)}$, a topology on $\widehat{G}_r$. This topology has a very natural description: a net of (equivalence classes of) irreducible unitary representations $\pi_i:G\to \Unitary(H_i)$ converges to an irreducible representation $\pi:G\to \Unitary(H)$ if there are unit vectors $\xi_i\in H_i$ and $\xi\in H$ such that the net of matrix-coefficient functions $g\mapsto \langle \pi(g)\xi_i\, |\, \xi_i\rangle$ converges, uniformly on compact subsets of $G$, to the function $g\mapsto \langle  \pi(g)\xi\, |\,\xi\rangle$. See \cite[18.1]{Dixmier-Cstar-fr} for details.
\end{remark}

Our computation of $C^*_r(G)$ for a real reductive group $G$ works by using theorems about tempered representations of $G$ to match  up the irreducible representations of $C^*_r(G)$ with the irreducible representations of other, simpler-looking $C^*$-algebras. We will now build up the results about those simpler-looking algebras that we shall need.

\subsection{Irreducible representations of $C_0(X)$}

Schur's lemma implies that every irreducible representation $\pi$ of a commutative $C^*$-algebra $C_0(X)$ is one-dimensional, \emph{i.e.}, a homomorphism of $C^*$-algebras $\pi:C_0(X)\to \C$. The Riesz representation theorem implies that $\pi$ is integration with respect to some measure, and the multiplicative property $\pi(fg)=\pi(f)\pi(g)$ implies that this measure must be evaluation at a single point: thus $\pi(f)=\ev_x(f)\coloneqq f(x)$ for some $x\in X$. If the representations $\ev_x$ and $\ev_y$ are equivalent then they have the same kernel. If $x\neq y$ then there is a function $f\in C_0(X)$ with $f(x)=0\neq f(y)$, so $\ev_x\not\cong \ev_y$. Thus the map $x\mapsto \ev_x$ is a bijection from $X$ to $\widehat{C_0(X)}$. With a little more work, one can prove:

\begin{theorem}\label{thm:CX-irreps}
The map $x\mapsto \ev_x$ is a homeomorphism $X\to \widehat{C_0(X)}$.
\end{theorem}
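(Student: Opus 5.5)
Bijectivity has already been established in the discussion preceding the statement, so the plan is to show that the map $x\mapsto \ev_x$ sends open sets to open sets and that its inverse does too. We will compare open sets through the ideals of $C_0(X)$. By Definition \ref{def:Cstar-reps}, the open subsets of $\widehat{C_0(X)}$ are exactly the sets $O_J$ for ideals $J\subseteq C_0(X)$. By the example following the definition of ideals, every ideal has the form $J=C_0(U)$ for a unique open $U\subseteq X$, namely the functions vanishing on $X\setminus U$. So it suffices to prove the identity
\[
\{x\in X\ |\ \ev_x\in O_{C_0(U)}\} = U
\]
for every open $U\subseteq X$. Since $U\mapsto C_0(U)$ is a bijection onto the set of ideals, this identity says that the image of the open set $U$ is the open set $O_{C_0(U)}$. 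It also shows that every open set of $\widehat{C_0(X)}$ arises in this way, so the map is a homeomorphism.

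To verify the identity, we unwind the definitions. We have $\ev_x\in O_{C_0(U)}$ if and only if $\ev_x$ does not vanish on $C_0(U)$, that is, if and only if some $f\in C_0(X)$ vanishing off $U$ has $f(x)\neq 0$. If $x\notin U$, every such $f$ satisfies $f(x)=0$, so $\ev_x\notin O_{C_0(U)}$. If $x\in U$, then $U$ is a locally compact Hausdorff space, and Urysohn's lemma there gives a compactly supported continuous function on $U$ with value $1$ at $x$. Extending this function by zero gives an element of $C_0(U)\subseteq C_0(X)$ that does not vanish at $x$, so $\ev_x\in O_{C_0(U)}$.

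The only substantive input is the correspondence between open sets of $X$ and ideals of $C_0(X)$ quoted in the earlier example. Its nontrivial half, that every closed ideal $J$ equals $C_0(U)$ for $U=\{x\ |\ f(x)\neq 0\text{ for some }f\in J\}$, is the main point at which things could go wrong. If we wanted to avoid relying on it, we would argue as follows. Take $g\in C_0(U)$, and for $\epsilon>0$ use compactness of $\{|g|\geq\epsilon\}$ together with a finite partition-of-unity argument to find $h\in J$ with $0\leq h\leq 1$ and $h=1$ on that set. Then $\|g-gh\|\leq 2\epsilon$ with $gh\in J$, and closedness of $J$ gives $g\in J$. Everything else in the argument is routine.
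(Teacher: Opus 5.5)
Your proof is correct. The openness half is the paper's argument: for open $U$ you take the ideal of functions vanishing off $U$ and use Urysohn's lemma to get $\ev(U)=O_{C_0(U)}$.

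You differ from the paper on continuity. You get it from the identity $\ev^{-1}(O_{C_0(U)})=U$ combined with the classification of ideals (every ideal equals $C_0(U)$ for some open $U$). That is exactly the step you flag as delicate, and you have to quote or reprove it. The paper avoids it: for an arbitrary ideal $J$ one has directly
\[
\ev^{-1}(O_J)=\{x\in X\ |\ \exists j\in J\text{ with }j(x)\neq 0\}=\bigcup_{j\in J}\{x\in X\ |\ j(x)\neq 0\},
\]
which is a union of open sets. So continuity needs nothing beyond continuity of the elements of $J$.

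Your route gives a little more, since it effectively identifies the lattice of ideals with the lattice of open subsets of $X$. For the homeomorphism statement, though, the paper's observation is more economical and makes your last paragraph unnecessary.

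Your sketch of the classification is sound. The point that needs a word is why the bump function lies in $J$. You can apply functional calculus to $\sum_i |f_{x_i}|^2\in J$. Alternatively, note that a function $h_i$ with compact support inside $\{f_{x_i}\neq 0\}$ equals $f_{x_i}\cdot(h_i/f_{x_i})$, where $h_i/f_{x_i}\in C_c(X)$, so $h_i\in J$.
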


\begin{exercise}\label{ex:CX-spectrum}
We proved above that the map $x\mapsto \ev_x$ is bijective. Now prove that it is a homeomorphism.
\end{exercise}

\subsection{Irreducible representations of $\Compact(H)$}

\begin{exercise}\label{ex:K-irreducible}
If $H$ is a nonzero Hilbert space, then the identity representation $\Compact(H)\to \Bounded(H)$ is irreducible.
\end{exercise}

\begin{theorem}\label{thm:K-irreps}
Every irreducible representation of $\Compact(H)$ is equivalent to the identity representation $\Compact(H)\to \Bounded(H)$.
\end{theorem}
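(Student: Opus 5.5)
Let $\pi:\Compact(H)\to \Bounded(H_\pi)$ be an irreducible representation. The plan is to build an explicit unitary intertwiner $u:H\to H_\pi$ using rank-one operators. For $\xi,\eta\in H$ write $\ket{\xi}\bra{\eta}$ for the rank-one operator $\zeta\mapsto \langle\eta\,|\,\zeta\rangle\xi$. These operators densely span $\Compact(H)$ and satisfy
\[
\ket{\xi_1}\bra{\eta_1}\,\ket{\xi_2}\bra{\eta_2}=\langle\eta_1\,|\,\xi_2\rangle\ket{\xi_1}\bra{\eta_2}
\quad\text{and}\quad
(\ket{\xi}\bra{\eta})^*=\ket{\eta}\bra{\xi}.
\]

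First, fix a unit vector $e\in H$ and let $p=\ket{e}\bra{e}$, a rank-one projection. I claim $\pi(p)\neq 0$. Every rank-one operator $\ket{\xi}\bra{\eta}$ factors as $\ket{\xi}\bra{e}\,p\,\ket{e}\bra{\eta}$. So if $\pi(p)=0$, then $\pi$ would kill all rank-one operators, and by density $\pi=0$. That is impossible, because an irreducible representation on a nonzero space is nonzero: the zero representation leaves every closed subspace invariant.

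Second, $\pi(p)$ is a nonzero projection, so I can choose a unit vector $f\in \pi(p)H_\pi$. Define $u$ on $H$ by
\[
u\xi=\pi(\ket{\xi}\bra{e})f.
\]
This is linear in $\xi$. It is isometric, since
\[
\langle u\xi\,|\,u\eta\rangle=\langle f\,|\,\pi(\ket{e}\bra{\xi}\,\ket{\eta}\bra{e})f\rangle=\langle\xi\,|\,\eta\rangle\langle f\,|\,\pi(p)f\rangle=\langle \xi\,|\,\eta\rangle,
\]
using $\pi(p)f=f$. It intertwines: for $k\in\Compact(H)$ we have $k\ket{\xi}\bra{e}=\ket{k\xi}\bra{e}$, so $\pi(k)u\xi=u(k\xi)$.

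Third, surjectivity. The range $uH$ is closed, because $u$ is isometric, and it is $\pi(\Compact(H))$-invariant by the intertwining relation. It is nonzero, since $ue=\pi(p)f=f$. Irreducibility of $\pi$ then forces $uH=H_\pi$. Hence $u$ is a unitary with $u\,k=\pi(k)\,u$, which is exactly the equivalence with the identity representation (whose irreducibility is Exercise \ref{ex:K-irreducible}).

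The only delicate point is the first step, showing that $\pi(p)\neq 0$. It rests on the density of finite-rank operators in $\Compact(H)$, which is standard, together with the factorisation through $p$. Everything after that is routine algebra with rank-one operators.
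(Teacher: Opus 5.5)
Your proof is correct, and it takes a genuinely different route from the paper. The paper argues through states. It uses the identification of $\Compact(H)^*$ with the trace-class operators to show that the pure states of $\Compact(H)$ are exactly the vector states $\phi_\xi(k)=\langle \xi\,|\,k\xi\rangle$. It then identifies the GNS representation of $\phi_\xi$ with the identity representation via $k\mapsto k\xi$, and invokes Theorem \ref{thm:reps}(1): every irreducible representation is the GNS representation of a pure state.

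You bypass all of this by computing directly with rank-one operators. In effect you find, inside an arbitrary irreducible $\pi$, a unit vector $f$ whose vector state is $\phi_e$: the identity $pkp=\langle e\,|\,ke\rangle p$ gives $\langle f\,|\,\pi(k)f\rangle=\langle e\,|\,ke\rangle$. Under the identification of $H_\pi$ with the GNS space of $\phi_e$, your $u$ is just the inverse of the paper's map $k\mapsto ke$, since $\ket{\xi}\bra{e}e=\xi$.

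What your route buys is self-containment and explicitness. It needs only density of finite-rank operators and automatic continuity of $*$-homomorphisms. It avoids the trace-class duality and the pure-state/irreducible correspondence, both of which the notes quote without proof. Running it with an orthonormal basis of $\pi(p)H_\pi$ in place of a single $f$ also shows that every nondegenerate representation of $\Compact(H)$ is a multiple of the identity representation. The paper's route fits the states/GNS framework of the section. The trace-class description of $\Compact(H)^*$ that it relies on is needed again anyway in the Stone--Weierstrass argument for $\Compact(H)$.

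One small correction: your justification that $\pi\neq 0$ fails when $\dim H_\pi=1$, because the zero representation on a one-dimensional space has no proper nonzero closed subspaces at all. The nonvanishing of $\pi$ must instead come from the convention that irreducible representations are nonzero by definition, which the paper uses in its solution to Exercise \ref{ex:representation-proofs}(7). Without that convention the statement itself would be false.
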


\begin{proof}[Proof (outline)]
We use the fact that every bounded linear map $\Compact(H)\to \C$ has the form $k\mapsto \trace(tk)$ for some trace-class operator $t$. (See, eg, \cite[4.1.2]{Dixmier-Cstar-fr}\footnote{Readers of the english translation should beware that in the proof of Corollary 4.1.3, `operators with positive trace' should instead say `positive trace-class operators'}.) From this one deduces that the pure states on $\Compact(H)$ are precisely the maps $\phi_\xi:k\mapsto \langle \xi\, |\, k\xi\rangle$, where $\xi$ is a unit vector in $H$. The map $\Compact(H)\to H$, $k\mapsto k\xi$ then induces a unitary isomorphism from the GNS Hilbert space $H_{\phi_\xi}$ to $H$, that intertwines the GNS representation $\pi_{\phi_\xi}$ with the identity representation $\Compact(H)\to \Bounded(H)$.
\end{proof}

\subsection{Irreducible representations of $C_0(X,\Compact(H))$}\label{sec:CXKH}

Let $X$ be a locally compact Hausdorff space, and let $H$ be a nonzero Hilbert space.

\begin{theorem}\label{thm:CXK-irreps}
\begin{enumerate}[\rm(1)]
\item For each $x\in X$ the map 
\[
\ev_x : C_0(X,\Compact(H))\to \Compact(H),\qquad \ev_x(k)=k(x)
\]
is an irreducible representation.
\item Every irreducible representation of $C_0(X,\Compact(H))$ is equivalent to $\ev_x$ for a unique $x\in X$.
\end{enumerate}
\end{theorem}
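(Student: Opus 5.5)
For part (1), I will show that $\ev_x$ is surjective onto $\Compact(H)$ and then invoke Exercise \ref{ex:K-irreducible}. Given $k\in\Compact(H)$, choose $\phi\in C_0(X)$ with $\phi(x)=1$; this is possible by Urysohn's lemma for locally compact Hausdorff spaces. The function $y\mapsto \phi(y)k$ lies in $C_0(X,\Compact(H))$ and evaluates to $k$ at $x$. So $\ev_x(C_0(X,\Compact(H)))=\Compact(H)$. Since the identity representation of $\Compact(H)$ is irreducible, the invariant closed subspaces for $\ev_x$ are those for $\Compact(H)$, and $\ev_x$ is irreducible.

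For part (2), let $\pi:A\to\Bounded(H_\pi)$ be irreducible, where $A=C_0(X,\Compact(H))$. The plan is to find the point $x$ by exploiting the central action of $C_0(X)$.

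First, $C_0(X)$ acts on $A$ by pointwise multiplication, $(\phi k)(y)=\phi(y)k(y)$. This is a Banach module action, and it commutes with the multiplication in $A$: $\phi(ab)=(\phi a)b=a(\phi b)$.

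Second, I extend this action to $H_\pi$. By Theorem \ref{thm:Cohen-Hewitt}, every vector of $H_\pi$ has the form $\pi(a)\xi$. Define $\rho(\phi)\pi(a)\xi\coloneqq \pi(\phi a)\xi$. I expect well-definedness to be the main technical point. The approach is to fix an approximate unit $(e_i)$ of $A$; then $\pi(\phi a)\xi=\lim\pi(\phi e_i)\pi(a)\xi$. One also has $\|\pi(\phi e_i)\|\le\|\phi\|$ when $e_i$ is taken with norm at most $1$. So for each $\eta\in H_\pi$ the limit $\lim_i\pi(\phi e_i)\eta$ exists and defines a bounded operator. This gives a representation $\rho:C_0(X)\to\Bounded(H_\pi)$. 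Because $\phi a=a\phi$ in the module sense, each $\rho(\phi)$ commutes with $\pi(A)$.

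Third, Theorem \ref{thm:SvN} gives $\pi(A)'=\C\id$. Hence $\rho(\phi)=\chi(\phi)\id$ for a character $\chi$ of $C_0(X)$. The character is nonzero, since $\rho$ is nondegenerate. By the discussion preceding Theorem \ref{thm:CX-irreps}, $\chi=\ev_x$ for a unique $x\in X$.

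Fourth, I show that $\pi$ kills the ideal $J_x=\{k: k(x)=0\}$. Given $k\in J_x$ and $\epsilon>0$, choose $\phi\in C_0(X)$ with $0\le\phi\le1$, $\phi(x)=1$, and $\phi$ supported where $\|k\|<\epsilon$. Then $k=\phi k+(1-\phi)k$ in the module sense. The first term has norm below $\epsilon$. For the second, $\pi((1-\phi)k)=\pi(k)-\rho(\phi)\pi(k)=\pi(k)-\pi(k)=0$. Letting $\epsilon\to0$ gives $\pi(k)=0$.

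So $\pi$ factors through $A/J_x\cong\Compact(H)$, the isomorphism being induced by $\ev_x$, which is surjective by part (1). By Theorem \ref{thm:ideal-irreps}(5) together with Theorem \ref{thm:K-irreps}, $\pi$ is equivalent to $\ev_x$.

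Uniqueness of $x$ follows because $\ker\ev_x=J_x$, and $J_x\neq J_y$ for $x\neq y$: take $\phi(y)=0\neq\phi(x)$ and multiply by a fixed nonzero compact operator.

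An alternative to the $\rho$ construction in the second and third steps is to take $\rho$ to be the restriction to $C_0(X)$ of the canonical extension of $\pi$ to the multiplier algebra. That route requires the theory of multipliers, which the paper has not introduced, so I prefer the approximate-unit construction above.
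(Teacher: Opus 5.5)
Your proposal is correct and follows the same route as the paper's outline. For (1) you use surjectivity of $\ev_x$, and for (2) you build the representation $\rho$ of $C_0(X)$ via Cohen--Hewitt, use Schur's lemma to obtain a point $x$, show $\pi$ vanishes on $\{k : k(x)=0\}$, and then apply Theorem~\ref{thm:K-irreps}. You also correctly fill in details the paper leaves implicit: well-definedness of $\rho$ via an approximate unit, the $\epsilon$-argument that $\pi$ kills this ideal, and uniqueness of $x$.
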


\begin{proof}[Proof (outline)]
Each evaluation map $C_0(X,\Compact(H))\to \Compact(H)$ is surjective, and therefore an irreducible representation.

To show that every irreducible representation has this form, observe first that $C_0(X,\Compact(H))$ is a $C_0(X)$-module, in an obvious way. If $\pi:C_0(X,\Compact(H))\to \Bounded(H_\pi)$ is an irreducible representation then every element of $H_\pi$ can be written in the form $\pi(k)\xi$ for some $k\in C_0(X,\Compact(H))$ and $\xi\in H_\pi$ (Theorem \ref{thm:Cohen-Hewitt}), and the formula $\rho(f)\left( \pi(k)\xi\right) \coloneqq \pi(fk)\xi$  (for $f\in C_0(X)$, $k\in C_0(X,\Compact(H))$, and $\xi\in H_\pi$) defines a representation $\rho:C_0(X)\to \Bounded(H_\pi)$. Schur's lemma ensures that each $\rho(f)$ is a scalar operator, and therefore there is a unique $x\in X$ such that $\rho(f)=f(x)\id_{H_\pi}$ for each $f\in C_0(X)$. The representation $\pi$ is trivial on the ideal $\{ k\in C_0(X,\Compact(H))\ |\ k(x)=0\}$, and so $\pi$ factors through the quotient mapping
\[
\ev_x : C_0(X,\Compact(H)) \xrightarrow{k\mapsto k(x)} \Compact(H).
\]
Now Theorem \ref{thm:K-irreps} implies that $\pi$ is equivalent to $\ev_x$. 
\end{proof}

\subsection{Irreducible representations of direct sums}

If $A_i$ $(i\in I)$ is a family of $C^*$-algebras, then we define the direct sum
\[
\bigoplus_{i\in I} A_i = \{ (a_i)_{i\in I}\ |\ \|a_i\|\to 0 \text{ as }i\to\infty\}.
\]
This is a $C^*$-algebra, under coordinate-wise algebraic operations and with the norm $\|(a_i)_{i\in I}\|=\sup_{i\in I}\|a_i\|$.

\begin{exercise}\label{ex:direct-sum-irreps}
Prove that:
\begin{enumerate}[\rm(1)]
\item Each irreducible representation of $\bigoplus_{i\in I}A_i$ factors through the projection onto one of the summands $A_i$. 
\item Two irreducible representations of the direct sum are equivalent if and only if they factor through the same $A_i$, and are equivalent as representations of $A_i$.
\end{enumerate}
\end{exercise}

\subsection{Irreducible representations of $\Compact(H)^W$}

Let $\pi:W\to \Unitary(H)$ be a unitary representation of a finite group $W$. The formula
\(
\alpha_w(t)=\pi(w)t\pi(w)^*
\) 
defines an action of $W$ by $*$-automorphisms on the $C^*$-algebra $\Bounded(H)$, which restricts to an action on the  ideal $\Compact(H)$. We will compute the irreducible representations of the fixed-point algebra $\Compact(H)^W$, using the facts about representations of the finite group $W$ recalled in Appendix \ref{sec:compact-groups}.

We decompose $H$ into the direct sum $\bigoplus_{\rho\in \widehat{W}} H_{\pi,\rho}$ of its isotypical subspaces. Schur's lemma implies that there are no nonzero $W$-equivariant maps $H_{\pi,\rho}\to H_{\pi,\sigma}$ when $\rho\not\cong \sigma$, and so we have
\[
\Compact(H)^W = \bigoplus_{\rho\in \widehat{W}} \Compact(H_{\pi,\rho})^W.
\]
Exercise \ref{ex:direct-sum-irreps} gives an identification 
\[
\widehat{\Compact(H)^W} \cong \bigsqcup_{\rho \in \widehat{W}} \widehat{\Compact(H_{\pi,\rho})^W}.
\]

As in Appendix \ref{sec:compact-groups} (7) we consider, for each $\rho\in \widehat{W}$, the Hilbert space $\HS(\rho,\pi)^W$ of $W$-equivariant linear maps $H_\rho\to H$, with inner product $\langle r\, |\, s\rangle=\trace(r^*s)$. 

\begin{exercise}\label{ex:KW}
Prove:
\begin{enumerate}[\rm(1)]
\item For each $t\in \Bounded(H_{\pi,\rho})^W$ the formula $s\mapsto t\circ s$ defines a bounded linear map $\phi_{\pi,\rho}(t)\in \Bounded(\HS(\rho,\pi)^W)$. 
\item The map $\phi_{\pi,\rho}:\Bounded(H_{\pi,\rho})^W\to \Bounded(\HS(\rho,\pi)^W)$ is an isomorphism of $C^*$-algebras.
\item $\phi_{\pi,\rho}$ restricts to an isomorphism $\Compact(H_{\pi,\rho})^W \xrightarrow{\cong} \Compact(\HS(\rho,\pi)^W)$.
\end{enumerate}
\emph{Hint:} use the isomorphism $\mu_{\pi,\rho} : H_\rho \otimes \HS(\rho,\pi)^W \to H_{\pi,\rho}$ from Appendix \ref{sec:compact-groups}(7).
\end{exercise}

Since the $C^*$-algebra $\Compact(\HS(\rho,\pi)^W)$ has only one equivalence class of irreducible representations (Theorem \ref{thm:K-irreps}), we conclude:

\begin{theorem}\label{thm:KHW-irreps}
Let $\pi:W\to \Unitary(H)$ be a unitary representation of a finite group $W$, and consider the $C^*$-algebra 
\[
\Compact(H)^W = \{k\in \Compact(H)\ |\ \pi(w)k=k\pi(w)\text{ for all }w\in W\}.
\]
\begin{enumerate}[\rm(1)]
\item For each irreducible representation $\rho$ of $W$ occurring in $H$, the composition
\[
\Compact(H)^W \xrightarrow{k\mapsto \pi(e_\rho)k} \Compact(H_{\pi,\rho})^W \xrightarrow[\cong]{\phi_{\pi,\rho}} \Compact(\HS(\rho,\pi)^W)
\]
is an irreducible representation, which we shall also denote by $\phi_{\pi,\rho}$. (In other words, $\phi_{\pi,\rho}(k):s\mapsto k\circ s$, for $k\in \Compact(H)^W$ and $s\in \HS(\rho,\pi)^W$.)
\item Every irreducible representation of $\Compact(H)^W$ is equivalent to $\phi_{\pi,\rho}$ for some $\rho\subseteq\pi$.
\item $\phi_{\pi,\rho_1}$ and $\phi_{\pi,\rho_2}$ are equivalent as representations of $\Compact(H)^W$ if and only if $\rho_1$ and $\rho_2$ are equivalent as representations of $W$.\hfill\qed
\end{enumerate}
\end{theorem}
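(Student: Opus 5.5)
The plan is to assemble the theorem from three ingredients already in place: the direct-sum decomposition $\Compact(H)^W=\bigoplus_{\rho\in\widehat{W}}\Compact(H_{\pi,\rho})^W$, Exercise \ref{ex:direct-sum-irreps} on irreducible representations of direct sums, and Exercise \ref{ex:KW} together with Theorem \ref{thm:K-irreps} for each summand.

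First I make the decomposition precise. Let $e_\rho$ be the central idempotent of Appendix \ref{sec:compact-groups}, so that $\pi(e_\rho)$ is the orthogonal projection onto $H_{\pi,\rho}$. Since $\pi(e_\rho)$ is a combination of the operators $\pi(w)$, it commutes with every $k\in\Compact(H)^W$. Using $\sum_\rho\pi(e_\rho)=\id_H$, I can therefore write $k=\sum_\rho \pi(e_\rho)k\pi(e_\rho)$. Here $\pi(e_\rho)k=\pi(e_\rho)k\pi(e_\rho)$ is a compact $W$-equivariant operator on $H_{\pi,\rho}$. This gives the isomorphism $k\mapsto(\pi(e_\rho)k)_\rho$. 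The sum is finite, because $\widehat{W}$ is finite, so no vanishing-at-infinity issues arise. Only those $\rho$ with $H_{\pi,\rho}\neq0$ contribute nonzero summands.

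Next, for each such $\rho$, Exercise \ref{ex:KW}(3) identifies $\Compact(H_{\pi,\rho})^W$ with $\Compact(\HS(\rho,\pi)^W)$ via $\phi_{\pi,\rho}$. The space $\HS(\rho,\pi)^W$ is nonzero exactly when $\rho\subseteq\pi$: by the isomorphism $\mu_{\pi,\rho}$, $H_\rho\otimes\HS(\rho,\pi)^W\cong H_{\pi,\rho}$. For such $\rho$, the composite $k\mapsto\phi_{\pi,\rho}(\pi(e_\rho)k)$ is a surjection onto $\Compact(\HS(\rho,\pi)^W)$, and the identity representation of that algebra is irreducible by Exercise \ref{ex:K-irreducible}. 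So the composite is irreducible. Because each $s\in\HS(\rho,\pi)^W$ has range in $H_{\pi,\rho}$, we get $\pi(e_\rho)k\circ s=k\circ s$, which gives the stated formula. This proves (1).

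For (2), Exercise \ref{ex:direct-sum-irreps}(1) says that any irreducible representation factors through the projection onto some summand $\Compact(H_{\pi,\rho})^W$ with $\rho\subseteq\pi$, since the zero summands have no irreducible representations. Transporting along $\phi_{\pi,\rho}$ and applying Theorem \ref{thm:K-irreps}, it is equivalent to $\phi_{\pi,\rho}$. For (3), Exercise \ref{ex:direct-sum-irreps}(2) shows that $\phi_{\pi,\rho_1}$ and $\phi_{\pi,\rho_2}$ are inequivalent when they factor through different summands, that is, when $\rho_1\not\cong\rho_2$. Here I index the summands by equivalence classes, so equivalent $\rho$'s give literally the same summand. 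When $\rho_1\cong\rho_2$, I fix a unitary $W$-intertwiner $v:H_{\rho_1}\to H_{\rho_2}$. Then $s\mapsto s\circ v^{*}$ is a unitary $\HS(\rho_1,\pi)^W\to\HS(\rho_2,\pi)^W$ that intertwines left composition by $k$.

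The main obstacle is Exercise \ref{ex:KW} itself, which I would prove in the following order. Using $\mu_{\pi,\rho}$, one identifies $\Bounded(H_{\pi,\rho})^W$ with $\id_{H_\rho}\otimes\Bounded(\HS(\rho,\pi)^W)$. This uses Schur's lemma: the commutant of $\rho\otimes1$ in $\Bounded(H_\rho\otimes K)$ is $1\otimes\Bounded(K)$ for irreducible $\rho$. Under this identification, $t\mapsto\phi_{\pi,\rho}(t)$ becomes $\id\otimes x\mapsto x$. Compactness is preserved in both directions, since $H_\rho$ is finite-dimensional and $\id_{H_\rho}\otimes x$ is compact iff $x$ is. All the remaining steps are routine.
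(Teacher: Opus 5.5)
Your proposal is correct and follows essentially the same route as the paper: the paper also obtains the theorem from the decomposition $\Compact(H)^W=\bigoplus_\rho\Compact(H_{\pi,\rho})^W$, Exercise \ref{ex:direct-sum-irreps}, the isomorphism of Exercise \ref{ex:KW}, and the uniqueness of the irreducible representation of compact operators (Theorem \ref{thm:K-irreps}). The only minor variation is in Exercise \ref{ex:KW}: there the paper does not invoke the commutant of $\rho\otimes 1$, but checks directly that $t\mu_{\pi,\rho}(\zeta\otimes s)=\mu_{\pi,\rho}(\zeta\otimes\phi_{\pi,\rho}(t)s)$, which yields the same identification $\mu_{\pi,\rho}^*t\mu_{\pi,\rho}=\id_{H_\rho}\otimes\phi_{\pi,\rho}(t)$.
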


\subsection{Irreducible representations of $C_0(X,\Compact(H))^W$}\label{subsec:CXKW}

We will now consider a common generalisation of the $C^*$-algebras $C_0(X,\Compact(H))$ and $\Compact(H)^W$. To motivate the definition, think of $C_0(X,\Compact(H))$ as an algebra of endomorphisms of the (trivial)  bundle of Hilbert spaces $X\times H$ over $X$; and think of $\Compact(H)^W$ as an algebra of endomorphisms of the $W$-equivariant Hilbert space $H$ (which we can view as a bundle over a single point.) In this section we will combine these two scenarios by looking at $C^*$-algebras of endomorphisms of $W$-equivariant bundles of Hilbert spaces. Algebras of this form will turn out to be crucial to understanding $C^*_r(G)$ for real reductive groups $G$.

\begin{definition}\label{def:XH-action}
Let $X$ be a locally compact Hausdorff space, equipped with an action of a finite group $W$ by homeomorphisms, and let $H$ be a Hilbert space. An \emph{action of $W$ on $(X,H)$} is an action of $W$ on the product space $X\times H$ by homeomorphisms, such that
\begin{enumerate}[\rm(1)]
\item The projection $X\times H \xrightarrow{(x,\xi)\mapsto x} X$ is $W$-equivariant (\emph{i.e.}, $w(x,\xi)\mapsto wx$); and
\item For each $w\in W$ and each $x\in X$  the map $I_{w,x}:H\to H$ defined as the composition
\[
I_{w,x}:H\xrightarrow{\xi\mapsto(x,\xi)} \{x\}\times H \xrightarrow{\text{act by $w$}} \{wx\}\times H \xrightarrow{(wx,\xi)\mapsto\xi} H
\]
is a unitary linear map.
\end{enumerate}
\end{definition}

\begin{example}
If $X=\{x\}$ is a single point then an action of $W$ on $(X,H)$ is the same thing as a unitary representation $w\mapsto I_{w,x}$ of $W$ on $H$. 
\end{example}

\begin{exercise}\label{ex:XH-action}
\begin{enumerate}[\rm(1)]
\item Suppose that $W$ acts on $(X,H)$, as in Definition \ref{def:XH-action}. Prove that the operators $I_{w,x}:H\to H$ have the following properties:
\begin{enumerate}[\rm(i)]
\item For each $x\in X$ and all $w_1,w_2\in W$ we have $I_{w_1, w_2 x}\circ I_{w_2,x} = I_{w_1 w_2, x}$.
\item For each $w\in W$ the function $x\mapsto I_{w,x}$ is continuous in the strong operator topology.  (That is, for each $\xi\in H$ the function $X\to H$, $x\mapsto I_{w,x}\xi$, is continuous.)
\end{enumerate}
\item Let $X$ be a space equipped with an action of $W$, let $H$ be a Hilbert space, and suppose that $\{I_{w,x}:H\to H\ |\ w\in W,\ x\in X\}$ is a family of unitary operators satisfying properties (i) and (ii) above. Prove that the formula $w(x,\xi)\coloneqq (wx, I_{w,x}\xi)$ defines an action of $W$ on $(X,H)$ in the sense of Definition \ref{def:XH-action}.
\end{enumerate}

Thus an action of $W$ on $(X,H)$ is the same thing as an action of $W$ on $X$, together with a family of unitary operators $I = \{I_{w,x}\in \Unitary(H)\ |\ w\in W,\ x\in X\}$ having properties (i) and (ii).
\end{exercise}

\begin{exercise}\label{ex:beta-w-f-continuous}
Suppose that $W$ acts on $(X,H)$. For each $k\in C_0(X,\Compact(H))$ and each $w\in W$ we let $\alpha_w(k):X\to \Compact(H)$ be the function
\begin{equation}\label{eq:alpha-def}
\alpha_w(k)(x) = I_{w,w^{-1}x}k(w^{-1}x)I_{w^{-1},x}.
\end{equation}
The function $\alpha_w(k)$ is continuous for the strong-operator topology on $\Compact(H)$, because the family of operators $I_{w,x}$ is  strong-operator continuous in $x$ (Exercise \ref{ex:XH-action}). Prove that $\alpha_w(k)$ is, in fact, continuous for the \emph{norm} topology on $\Compact(H)$, and hence that the maps $\alpha_w$ define an action of $W$ on the $C^*$-algebra $C_0(X,\Compact(H))$.
\end{exercise}

\begin{exercise}\label{ex:fixed-point-crossed-product}
Let $X$ be a space with an action of a finite group $W$, and consider the Hilbert space $\ell^2 W$, with its standard orthonormal basis $\{\delta_v\ |\ v\in W\}$. For each $x\in X$ and $w\in W$ let $I_{w,x}\in \Unitary(\ell^2 W)$ be the left-translation operator $I_{w,x}=\lambda_w : \delta_v\mapsto \delta_{wv}$.  The criteria (i) and (ii) from Exercise \ref{ex:XH-action} are easily seen to hold for these operators, and so they define an action of $W$ on $(X,\ell^2 W)$, and hence an action $\alpha$ of $W$ on  $C_0(X,\Compact(\ell^2 W))$, as in Exercise \ref{ex:beta-w-f-continuous}. The action of $W$ on $X$ also gives rise to an action $\beta$ of $W$ on $C_0(X)$, defined by $\beta_w(f)(x)=f(w^{-1}x)$ for $f\in C_0(X)$ and $x\in X$, and hence to a crossed product $C^*$-algebra $C_0(X)\rtimes W$. For each $f\in C_0(X)$, $w\in W$,  and $x\in X$, let $\phi(fw)(x)$ be the operator on $\ell^2 W$ defined by  $\delta_v \mapsto f(wv^{-1}x)\delta_{vw^{-1}}$. Prove that this formula gives  an isomorphism of $C^*$-algebras $\phi: C_0(X)\rtimes W\xrightarrow{\cong} C_0(X,\Compact(\ell^2 W))^W$.
\end{exercise}

Returning to the general case, our goal in this section is to compute the irreducible representations of the fixed-point algebra $C_0(X,\Compact(H))^W$, for the action \eqref{eq:alpha-def} associated to an action of $W$ on $(X,H)$.

For each $x\in X$ let $W_x= \{w\in W\ |\ wx=x\}$. The map $I_{x}:  w\mapsto I_{w,x} $ is a unitary representation of $W_x$ on $H$. We write $H_x$ (instead of $H_{I_x}$) to denote the Hilbert space $H$ equipped with this representation, and write $\phi_{x,\rho}$ (instead of $\phi_{I_x,\rho}$) for the irreducible representation of $\Compact(H_x)^{W_x}$ associated to $\rho$  in Theorem \ref{thm:KHW-irreps}.

\begin{theorem}\label{thm:fixed-irreps}
Let $X$, $W$, $H$, and $I$ be as above.
\begin{enumerate}[\rm(1)]
\item For each $x\in X$ and each $\rho\in \widehat{W_x}$ that occurs in  $I_x$, the map 
\[
\pi_{x,\rho} = \phi_{x,\rho}\circ \ev_x:C_0(X,\Compact(H))^W \to \Bounded(\HS(\rho,I_x)^{W_x}),\quad \pi_{x,\rho}(k):s\mapsto k(x)\circ s
\]
is an irreducible representation. 
\item Every irreducible representation of $C_0(X,\Compact(H))^W$ is equivalent to $\pi_{x,\rho}$ for some $x$ and $\rho$.
\item Two irreducible representations $\pi_{x_1,\rho_1}$ and $\pi_{x_2,\rho_2}$ as in {\rm(1)}  are equivalent if and only if there is some $w\in W$ such that $x_2=wx_1$ and such that the representation $\rho_2$ of $W_{x_2}$ is equivalent to the representation $w\rho_1:v\mapsto \rho_1(w^{-1} v w)$.
\end{enumerate}
Consequently, the map $(Wx,\rho)\mapsto \pi_{x,\rho}$ is a bijection between the set 
\[
\{(Wx, \rho)\ |\ Wx\in W\backslash X,\ \rho\in \widehat{W_x},\ \rho\subseteq I_x\} 
\]
and $\left(C_0(X,\Compact(H))^W\right)^{\widehat{\ }}$.
\end{theorem}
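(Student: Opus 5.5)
The plan is to reduce everything to Theorem \ref{thm:KHW-irreps} by showing that, for each $x\in X$, evaluation at $x$ maps $A\coloneqq C_0(X,\Compact(H))^W$ \emph{onto} $\Compact(H_x)^{W_x}$. Then I would identify every irreducible representation of $A$ as factoring through one such evaluation.

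First, $W$-invariance gives $k(wx)=I_{w,x}k(x)I_{w,x}^*$ for $k\in A$. In particular $\ev_x(A)\subseteq \Compact(H_x)^{W_x}$.

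For surjectivity, fix $t\in\Compact(H_x)^{W_x}$. Choose $f\in C_c(X)$ with $f(x)=1$ and support disjoint from $wx$ for every $w\notin W_x$; this is possible because $X$ is Hausdorff and $Wx$ is finite. Set
\[
k(y)=\sum_{w\in W} f(w^{-1}y)\,I_{w,x}\,t\,I_{w,x}^*,
\]
where only constant operators appear, so norm continuity is automatic. Since $W$ fixes the $W$-invariant sum pattern and $t$ commutes with $I_{W_x,x}$, one checks with the cocycle identity (Exercise \ref{ex:XH-action}) that $\alpha_v(k)=k$ and $k(x)=t$.

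\emph{Caution:} $\alpha_v$ conjugates by $I_{v,v^{-1}y}$, which depends on $y$. So the naive formula above need not be invariant. I would instead take a general $k_0(y)=f(y)t$ and average, $k=\frac{1}{|W|}\sum_v \alpha_v(k_0)\cdot$(normalisation). At $x$, only $v\in W_x$ contribute, each giving $I_{v,x}tI_{v,x}^*=t$. So $k(x)=\frac{|W_x|}{|W|}t$, and rescaling gives $t$. Norm continuity of $\alpha_v(k_0)$ is Exercise \ref{ex:beta-w-f-continuous}. This averaging is the cleaner route. Part (1) then follows, since $\phi_{x,\rho}\circ\ev_x$ is a composite of a surjection with an irreducible representation.

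For (2), I would mimic the proof of Theorem \ref{thm:CXK-irreps}, with $C_0(X/W)\cong C_0(X)^W$ (Exercise \ref{ex:W-acting-on-X}) acting centrally on $A$ by multiplication. Given irreducible $\pi$, Cohen--Hewitt and Schur give a character of $C_0(X/W)$, namely evaluation at some orbit $Wx$. Then $\pi$ kills $J=\{k\in A: k|_{Wx}=0\}$, because $J=C_0((X\setminus Wx)/W)\cdot J$, again by Cohen--Hewitt.

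Next, $A/J$ is identified via $\ev_x$ with $\Compact(H_x)^{W_x}$. Values on the rest of the orbit are determined by $k(wx)=I_{w,x}k(x)I_{w,x}^*$, so the kernel of $\ev_x$ on $A$ equals $J$. Combining this with surjectivity and Theorem \ref{thm:ideal-irreps}(5), Theorem \ref{thm:KHW-irreps}(2) yields $\pi\cong\pi_{x,\rho}$.

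For (3), the easy direction: if $x_2=wx_1$, the unitary $s\mapsto I_{w,x_1}\circ s$ from $\HS(\rho_1,I_{x_1})^{W_{x_1}}$ to $\HS(w\rho_1,I_{x_2})^{W_{x_2}}$ intertwines, by the covariance identity. Here the representation space of $w\rho_1$ is $H_{\rho_1}$ with the twisted action, and the cocycle identity shows equivariance is preserved. Conversely, equivalent representations have the same central character, so $Wx_1=Wx_2$. Transport by $w$ reduces to the case $x_1=x_2$, which is Theorem \ref{thm:KHW-irreps}(3) via the identification $A/J\cong\Compact(H_x)^{W_x}$.

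I expect the main obstacle to be the surjectivity/kernel step. The delicate point is using the correct $y$-dependent conjugation in the averaging and in the identity relating values along an orbit.
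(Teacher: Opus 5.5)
Your proposal is correct. Parts (1) and (3) follow the paper's own argument:
\begin{enumerate}[\rm(i)]
\item surjectivity of $\ev_x$ onto $\Compact(H_x)^{W_x}$ by averaging a cut-off times $t$. Your rescaled average is exactly \eqref{eq:f-average}. Delete the first, unaveraged formula, which, as you noticed, is not $W$-invariant.
\item the intertwiner $s\mapsto I_{w,x_1}\circ s$.
\item the central subalgebra $C_0(X)^W$, used to put $x_1,x_2$ in one orbit before invoking Theorem \ref{thm:KHW-irreps}(3).
\end{enumerate}

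The genuine difference is in part (2). The paper takes an irreducible representation of $A=C_0(X,\Compact(H))^W$ and extends it, via Theorem \ref{thm:reps}(3), to an irreducible representation of the larger algebra $C_0(X,\Compact(H))$. By Theorem \ref{thm:CXK-irreps} that representation is some $\ev_x$, and the paper reads off the $A$-action on the invariant subspace through $\ev_x(A)=\Compact(H)^{W_x}$. You stay inside $A$ instead:
\begin{enumerate}[\rm(i)]
\item the central character of $C_0(X)^W\cong C_0(X/W)$ picks out an orbit $Wx$;
\item the representation kills $J$;
\item the covariance identity $k(wx)=I_{w,x}k(x)I_{w,x}^*$ gives $\ker(\ev_x|_A)=J$, hence $A/J\cong\Compact(H_x)^{W_x}$;
\item Theorem \ref{thm:ideal-irreps}(5) and Theorem \ref{thm:KHW-irreps}(2) then finish.
\end{enumerate}
The paper's route is shorter because it outsources the work to the pure-state extension theorem. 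Yours avoids that theorem and parallels the proof of Theorem \ref{thm:CXK-irreps}. It also yields a sharper structural fact: the quotient of $A$ by the ideal of functions vanishing on an orbit is exactly $\Compact(H_x)^{W_x}$. You then reuse this in (3).

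When writing it up, make two small points explicit.
\begin{enumerate}[\rm(i)]
\item The character of $C_0(X)^W$ is nonzero, because products $hk$ are dense in $A$.
\item $\pi(J)=0$ follows directly from $\pi(hk)=h(x)\pi(k)$, together with density in $J$ of products $hk$ with $h$ vanishing on $Wx$. That density needs the usual cut-off argument: Theorem \ref{thm:Cohen-Hewitt} by itself only gives closedness of the set of products, not density.
\end{enumerate}
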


\begin{proof}[Proof (outline)]
First note that the evaluation map $\ev_x:C_0(X,\Compact(H))^W\to \Compact(H)^{W_x}$ is surjective: to prove this, given $k\in \Compact(H)^{W_x}$, choose a function $h\in C_0(X)$ with $h(x)=1$ and $h(y)=0$ for all $y\in Wx\setminus \{x\}$. The function $hk:y\mapsto h(y)k$ lies in $C_0(X,\Compact(H))$, and the function
\begin{equation}\label{eq:f-average}
f\coloneqq \frac{1}{|W_x|}\sum_{w\in W} \alpha_w(hk)
\end{equation}
lies in $C_0(X,\Compact(H))^W$ and has $f(x)=k$. This surjectivity ensures that the composition of the evaluation map $\ev_x$ with the irreducible representation $\phi_{x,\rho}$ is an irreducible representation.

To show that every irreducible representation of $C_0(X,\Compact(H))^W$ is equivalent to one of the $\pi_{x,\rho}$s, recall that  Theorem \ref{thm:reps} says that every irreducible representation of $C_0(X,\Compact(H))^W$ is the restriction of an irreducible representation of $C_0(X,\Compact(H))$ to an irreducible, $C_0(X,\Compact(H))^W$-invariant subspace. The irreducible representations of $C_0(X,\Compact(H))$ are the evaluation maps $k\mapsto k(x)\in \Compact(H)$ (Theorem \ref{thm:CXK-irreps}). We noted above that the image of $C_0(X,\Compact(H))^W$ in such a representation is the subalgebra $\Compact(H)^{W_x}$ of $W_x$-equivariant compact operators, and so each irreducible $C_0(X,\Compact(H))^W$-invariant subspace of $H$ is equivalent to one of the irreducible representations $\phi_{x,\rho}$ identified in Theorem \ref{thm:KHW-irreps}.

Next take $x\in X$ and an irreducible representation $\rho$ of $W_x$ that occurs in $I_x$. Let $w\rho:W_{wx}\to \Unitary(H_\rho)$ be the irreducible representation $w\rho(v)=\rho(w^{-1}vw)$. The map
\[
\theta : \HS(\rho,I_x)^{W_x} \to \HS(w\rho,I_{wx})^{W_{wx}},\qquad s \mapsto I_{w,x}\circ s
\]
is a unitary isomorphism that intertwines the representations $\pi_{x,\rho}$ and $\pi_{wx,w\rho}$.

Finally, suppose that the representations $\pi_{x_1,\rho_1}$ and $\pi_{x_2,\rho_2}$ are equivalent. The algebra $C_0(X)^W$ of $W$-invariant $C_0$ functions on $X$ acts by pointwise multiplication on $C_0(X,\Compact(H))^W$. For each $h\in C_0(X)^W$, $k\in C_0(X,\Compact(H))^W$, and $x\in X$ we have $\ev_x(hk)=h(x)\ev_x(k)$, and consequently $\pi_{x,\rho}(hk)=h(x)\pi_{x,\rho}(k)$ for every $\rho\in \widehat{W_x}$. Since the representations $\pi_{x_1,\rho_1}$ and $\pi_{x_2,\rho_2}$ are equivalent they have equal kernels, which implies that every $h\in C_0(X)^W$ that vanishes at $x_1$ must also vanish at $x_2$. If $x_1$ and $x_2$ lay in distinct $W$-orbits then we could (by using Urysohn's lemma and then taking an average as in \eqref{eq:f-average}) construct a function $h\in C_0(X)^{W}$ that vanished at $x_1$ but not at $x_2$. So we must have $x_2=wx_1$ for some $w\in W$. The argument of the previous paragraph then shows that $\pi_{x_1,\rho_1}$---which we are assuming is equivalent to $\pi_{x_2,\rho_2}$---is also equivalent to $\pi_{x_2,w\rho_1}$. This implies that  $\phi_{x_2,\rho_2}$ and $\phi_{x_2,w\rho_1}$  are equivalent as representations of $\Compact(H_{x_2})^{W_{x_2}}$, and so Theorem \ref{thm:KHW-irreps} implies that $\rho_2$ and $w\rho_1$ are equivalent as representations of $W_{x_2}$.

The `consequently' part follows immediately from (1), (2), and (3).
\end{proof}

\begin{exercise}\label{ex:CXKW-example}
Let $X=\R$, and let $W$ be the group $\{1,w\}$ with two elements, acting on $\R$ by $wx=-x$. For each $x$ let
\[
I_{1,x} = \bmat{1 & 0 \\ 0 & 1} \quad \text{and}\quad I_{w,x} = \bmat{ e^{ix} & 0 \\ 0 & -e^{-ix}}\in \Unitary(\C^2).
\]
Check that this definition gives an action of $W$ on $(\R,\C^2)$; 
compute the dual $\left(C_0(\R,\Compact(\C^2))^W\right)^{\widehat{\ \ }}$ as a set; and prove that this dual is non-Hausdorff as a topological space.
\end{exercise}

\subsection{CCR $C^*$-algebras}

\begin{definition}
A $C^*$-algebra $A$ is said to be \emph{CCR} (for \emph{Completely Continuous Representations}), or \emph{liminal}, if for each irreducible representation $\pi:A\to \Bounded(H)$ we have $\pi(A)\subseteq \Compact(H)$ (equivalently, $\pi(A)=\Compact(H)$: see below.)
\end{definition}

\begin{examples}
The computations from the previous few sections show that $C_0(X)$, $\Compact(H)$, $C_0(X,\Compact(H))$, and $C_0(X,\Compact(H))^W$ are all CCR. Subalgebras (Theorem \ref{thm:reps}), quotients (Theorem \ref{thm:ideal-irreps}), and direct sums (Exercise \ref{ex:direct-sum-irreps}) of CCR $C^*$-algebras are CCR.
\end{examples}

\begin{theorem}[Harish-Chandra]\label{thm:G-CCR}
The reduced group $C^*$-algebra $C^*_r(G)$  of a real reductive group $G$ is CCR.
\end{theorem}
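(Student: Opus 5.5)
We would prove Theorem~\ref{thm:G-CCR} by reducing it to a finiteness statement about $K$-types, following Harish-Chandra's classical argument. Fix a maximal compact subgroup $K\subseteq G$. By Lemma~\ref{lem:Ghat-CstarGhat}, every irreducible representation of $C^*_r(G)$ is the integrated form of some $\pi\in\widehat{G}_r$. So it suffices to show that $\pi(f)$ is a compact operator for every $f\in C_c(G)$. Indeed, $\lambda(C_c(G))$ is dense in $C^*_r(G)$, $\pi$ is contractive, and $\Compact(H_\pi)$ is norm closed, so compactness then passes to all of $C^*_r(G)$.

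The key representation-theoretic input, which we would quote rather than prove, is Harish-Chandra's admissibility theorem: for every irreducible unitary representation $\pi$ of a real reductive group and every $\tau\in\widehat{K}$, the $\tau$-isotypical subspace $H_{\pi,\tau}$ is finite dimensional. In fact $\dim H_{\pi,\tau}\le (\dim\tau)^2$.

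Granting this, the argument runs as follows. For $\tau\in\widehat K$ let $e_\tau(k)=\dim\tau\,\overline{\chi_\tau(k)}$ be the usual central idempotent of $C(K)$. Then $\pi(e_\tau)$ is the orthogonal projection $p_\tau$ onto $H_{\pi,\tau}$, and it has finite rank. For a finite set $F\subseteq\widehat K$ put $p_F=\sum_{\tau\in F}p_\tau$.
\begin{enumerate}[(a)]
\item Because $f\in C_c(G)$ and $K$ is compact, $f$ can be approximated in $L^1(G)$ by functions that are $K$-finite on both sides, that is, by functions of the form $e_F\ast f\ast e_F$ with $F$ finite. This uses approximate units of $C(K)$ built from characters, acting continuously on $L^1(G)$ by left and right translation.
\item Since $\|\pi(g)\|\le\|g\|_{L^1}$, the operator $\pi(f)$ is a norm limit of the operators $p_F\,\pi(f)\,p_F$.
\item Each $p_F\,\pi(f)\,p_F$ has finite rank, so $\pi(f)$ is compact.
\end{enumerate}

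The main obstacle is the admissibility theorem itself. Its proof is deep: it uses the structure of the center of the universal enveloping algebra, or alternatively Godement's argument via the algebra of $\tau$-spherical functions, which satisfies a polynomial identity of degree bounded in terms of $\dim\tau$. In these notes we would simply cite it, consistent with the paper's policy of quoting the representation theory. The routine part to check carefully is the approximation in step (a).
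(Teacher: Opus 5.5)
Your proposal is correct and is essentially the paper's argument: admissibility makes $\pi(e_F)$ finite rank, $K$-finite approximants converge to $f$ in $L^1$ and hence in operator norm, and density of $C_c(G)$ finishes. The only difference is that the paper uses the one-sided $e_F\ast f$, which already suffices because $\pi(e_F\ast f)=\pi(e_F)\pi(f)$ has finite rank. One caution on step (a): the $e_F$ are not a bounded approximate unit in $L^1(K)$ (for $K=SO(2)$ they include the Dirichlet kernels, whose $L^1$ norms are unbounded), so the convergence should be proved as in the paper, by noting that $f\mapsto e_F\ast f$ (and likewise $f\mapsto f\ast e_F$) acts on $L^2(G)$ as an orthogonal projection onto a finite sum of $K$-isotypic subspaces, these projections increasing to the identity, and that all approximants are supported in the fixed compact set $K\operatorname{supp}(f)K$, so Cauchy--Schwarz turns $L^2$ convergence into $L^1$ convergence.
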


\begin{proof}[Proof (outline)]
Harish-Chandra proved that every irreducible unitary representation of $G$ is \emph{admissible}: that is to say, for each maximal compact subgroup $K$ of $G$, each irreducible representation $\rho$ of $K$, and each irreducible unitary representation $\pi$ of $G$, the isotypical subspace $H_{\pi,\rho}$ is finite-dimensional. 

Here is why this implies that $C^*_r(G)$ is CCR: for each finite subset $F\subset \widehat{K}$, let $e_F = \sum_{\rho\in F}e_\rho \in C(K)$, where $e_\rho$ is as in \eqref{eq:e-rho}. Admissibility implies that the projection $\pi(e_F)$ has finite rank, for each $\pi\in \widehat{G}_r$ and each finite $F\subset \widehat{K}$. Now, $C(K)$ acts on $C_c(G)$ by convolution over $K$. Exercise \ref{ex:isotypical-approximation} (below) implies that for each $f\in C_c(G)$, the operator $\pi(f)$ is the norm limit of the finite-rank operators $\pi(e_F\ast f)=\pi(e_F)\pi(f)$, and hence $\pi(f)$ is compact. Since $C_c(G)$ is dense in $C^*_r(G)$, this gives $\pi(C^*_r(G))\subseteq \Compact(H_\pi)$ as required.
\end{proof}

\begin{exercise}\label{ex:isotypical-approximation} 
For each $f\in C_c(G)$ we have $\|f-e_F\ast f\|_{C^*_r(G)}\to 0$ as $F$ runs through the directed set of finite subsets of $\widehat{K}$. (Hint: reduce it to a claim about $L^2$ convergence, and then use the decomposition $L^2(G)\cong \bigoplus_{\rho\in \widehat{K}} L^2(G)_\rho$, where $K$ acts on $L^2(G)$ by left translation.) 
\end{exercise}

\subsection{Kaplansky's Stone--Weierstrass theorem}

\begin{definition}\label{def:SWP}
A subalgebra $B$ of a $C^*$-algebra $A$ is called \emph{separating}\footnote{Dixmier uses the term \emph{riche}} if:
\begin{enumerate}[\rm(1), leftmargin=*]
\item every irreducible representation of $A$ remains irreducible upon restriction to $B$; and
\item inequivalent irreducible representations of $A$ remain inequivalent upon restriction to $B$.
\end{enumerate}
A $C^*$-algebra $A$ has the \emph{Stone--Weierstrass property} (SWP) if the only separating subalgebra of $A$ is $A$ itself.
\end{definition}

\begin{example}
Let $A=C_0(X)$. Recall from Theorem \ref{thm:CX-irreps} that every irreducible representation of $A$ is equivalent to the map $\ev_x:f\mapsto f(x)$ for precisely one $x\in X$. A subalgebra $B\subseteq A$ is separating if and only if:
\begin{enumerate}[\rm(1)]
\item for each $x\in X$ there is some $b\in B$ with $b(x)\neq 0$; and
\item for each pair of points $x\neq y$ in $X$ there is some $b\in B$ with $b(x)\neq b(y)$.
\end{enumerate}
The usual Stone--Weierstrass theorem thus says that $C_0(X)$ has the Stone--Weierstrass property for every $X$.
\end{example}

\begin{exercise}\label{ex:separating-irreps}
Suppose that $B$ is separating in $A$. Prove that the map $\pi\mapsto \pi\restrict_B$, sending each irreducible representation of $A$ to its restriction to $B$, induces a homeomorphism $\widehat{A}\xrightarrow{\cong}\widehat{B}$.
\end{exercise}

\begin{theorem}[Kaplansky]\label{thm:SW}
Every CCR $C^*$-algebra has the Stone--Weierstrass property.
\end{theorem}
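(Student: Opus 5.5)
Let $A$ be CCR and $B\subseteq A$ a separating subalgebra. I will show $B=A$ by induction on a composition series, reducing at each stage to the case of elementary algebras $\Compact(H)$. Here the statement is clear: if $B\subseteq\Compact(H)$ acts irreducibly on $H$, then $B$ is an irreducible $C^*$-algebra of compact operators. By Theorem \ref{thm:K-irreps} applied to $\overline{B}$, together with the standard fact that an irreducible $C^*$-subalgebra of $\Compact(H)$ equals $\Compact(H)$, we get $B=\Compact(H)$.

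The first concrete step is the \emph{two-representation} statement. Let $\pi_1,\pi_2$ be inequivalent irreducible representations of $A$. Consider the image $(\pi_1\oplus\pi_2)(B)\subseteq \Compact(H_1)\oplus\Compact(H_2)$. It surjects onto each factor by the one-representation case. The kernel of the first projection, restricted to this image, is an ideal of $\Compact(H_2)$, so it is $0$ or $\Compact(H_2)$. If it were $0$, the image would be the graph of an isomorphism $\Compact(H_1)\to\Compact(H_2)$. Theorem \ref{thm:K-irreps} says such an isomorphism is implemented by a unitary, which would make $\pi_1|_B\cong\pi_2|_B$ and contradict separation. Hence $(\pi_1\oplus\pi_2)(B)=(\pi_1\oplus\pi_2)(A)$. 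Inductively, the same holds for any finite family of inequivalent irreducibles. So for every $a\in A$, every finite set $F\subseteq\widehat A$ and every $\epsilon$, there is $b\in B$ with $\pi(b)=\pi(a)$ for all $\pi\in F$.

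The main obstacle is passing from this finite interpolation to global approximation $\|a-b\|<\epsilon$. Unlike the commutative Stone--Weierstrass theorem, we have no partition of unity on $\widehat A$, which may be non-Hausdorff. My plan is to use the largest ideal $J\subseteq A$ with $J\subseteq B$ (the closed ideal generated by all ideals of $A$ contained in $B$) and show $J=A$. If $J\neq A$, pass to $A/J$. By Theorem \ref{thm:ideal-irreps}(5) and (6), $B/J$ is still separating in $A/J$, and $A/J$ is CCR. So it suffices to show that a separating subalgebra of a nonzero CCR algebra contains a nonzero ideal.

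For this step I would use that a nonzero CCR algebra has a nonzero ideal $I$ with Hausdorff spectrum that is continuous-trace-like: a standard structure theorem, which at least in the postliminal case I would cite from Dixmier. In $I$, the functions $\pi\mapsto\|\pi(a)\|$ are continuous on the Hausdorff space $\widehat I$, vanishing at infinity. On such an algebra the Dauns--Hofmann theorem gives a $C_0(\widehat I)$-module structure, so $I$ behaves like a section algebra $C_0(\widehat I,\Compact)$. I would then run a partition-of-unity argument. By the two-representation step, for $a\in I$ and each $\pi$, choose $b_\pi\in B$ with $\pi(b_\pi)=\pi(a)$; continuity of norms makes $\|\rho(a-b_\pi)\|<\epsilon$ on a neighbourhood of $\pi$. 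Compactness of $\{\rho:\|\rho(a)\|\ge\epsilon\}$ then patches finitely many $b_\pi$ together. The difficulty is doing the patching inside $B$, since multiplying by functions on $\widehat I$ need not preserve $B$. To handle this, I would instead patch using elements of $B$ itself: the finite interpolation lets me find $b\in B$ that agrees with $a$ at finitely many points and is small in norm near prescribed points, after applying functional calculus $b\mapsto g(b^*b)$. Combining this with continuity of the norm functions gives $I\cap B$ dense in $I$, hence $I\subseteq B$. This contradicts the maximality of $J$ and completes the proof.
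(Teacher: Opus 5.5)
Your reduction is sound. The largest ideal $J$ of $A$ contained in $B$ is well defined, $B/J$ is separating in $A/J$, and a nonzero ideal of $A/J$ inside $B/J$ would contradict maximality. This is a tidy substitute for the transfinite composition series in the paper's outline. Your finite-interpolation argument, via the graph of an isomorphism $\Compact(H_1)\to\Compact(H_2)$, is also correct.

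The gap is in the last step, which is where the real content of the theorem lies. There is first a fixable problem: you choose $b_\pi\in B$, but continuity of $\rho\mapsto\|\rho(x)\|$ on the Hausdorff space $\widehat I$ holds only for $x\in I$, not for $x\in A$. For example, take $A=\{f\in C(\mathbb{N}\cup\{\infty\},\Mat_2)\ |\ f(\infty)\text{ diagonal}\}$, $I=\{f\in A\ |\ f(\infty)_{22}=0\}$ and $x\equiv\operatorname{diag}(0,1)$. Then $\widehat I\cong\mathbb{N}\cup\{\infty\}$ is Hausdorff, but $\|\rho(x)\|$ equals $1$ at every $n$ and $0$ at $\infty$. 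So you must interpolate inside $B\cap I$. This is legitimate, because $B\cap I$ is separating in $I$: the argument in the solution to Exercise \ref{ex:separating-irreps} shows $\pi(B\cap I)\neq 0$ whenever $\pi(I)\neq 0$. But it has to be said.

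The serious problem is that the patching is never actually done. To glue local approximants $b_1,\ldots,b_n$ you need cut-offs. You need a partition of unity near the compact set $\{\rho\ |\ \|\rho(a)\|\geq\epsilon\}$, and something that kills the $b_i$ away from that set, where finite interpolation gives no control at all over $\|\rho(b_i)\|$. Your substitute does not supply this, for two reasons.
\begin{enumerate}[\rm(1)]
\item The neighbourhood on which $\|\rho(a-b)\|<\epsilon$ depends on $b$. Adding interpolation points changes $b$ and hence the cover, and nothing forces the process to terminate.
\item Functional calculus $g(b^*b)$ produces elements with compact images. These cannot act as scalar cut-offs $h(\rho)\id$ on infinite-dimensional $H_\rho$.
\end{enumerate}

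The missing idea is that the separating subalgebra is itself closed under multiplication by continuous functions on the spectrum. The paper proves this for $C_0(X,\Compact(H))$ in Claims 2--5. Because $\widehat B\cong X$, the ideals $J_Y$ of $B$ satisfy $J_Y+J_Z=J_{Y\cap Z}$. The norm-controlled splitting of Claim 3 then lets one approximate $fb$ by the elements $\sum_i \frac{i}{n}b_i$ of $B$. Only after that does the partition-of-unity argument of Claim 6 work.

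In your setting the quickest fix is to apply Dauns--Hofmann to $B\cap I$ rather than to $I$. Since $\widehat{B\cap I}\cong\widehat I$ is Hausdorff (Exercise \ref{ex:separating-irreps}), $C_0(\widehat I)$ acts on $B\cap I$ with $\rho(fb)=f(\rho)\rho(b)$. This identity forces the action to agree with the one on $I$, because irreducible representations of $I$ separate points. Hence $B\cap I$ is a $C_0(\widehat I)$-submodule of $I$, and your partition-of-unity argument, with each $b_\pi\in B\cap I$, then goes through exactly as in Claim 6.
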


We will give a reasonably detailed proof for the special case where $A=C_0(X,\Compact(H))$; then look at an example where $A=C_0(X,\Compact(H))^W$; and then outline the proof in the general case. See  \cite[11.1.6]{Dixmier-Cstar-fr} for a complete proof, and see  \cite{Akemann-Anderson} for related results and historical background. 

\paragraph*{Proof that $\Compact(H)$ has the Stone--Weierstrass property:} Since $\Compact(H)$ has only the one irreducible representation, a subalgebra $B\subseteq \Compact(H)$ is separating if and only if the inclusion $B\into \Bounded(H)$ is an irreducible representation. 

If this irreducibility does hold, then the {bicommutant theorem} (Theorem \ref{thm:SvN}) implies that $B$ is dense in $\Bounded(H)$---and therefore, in particular, in $\Compact(H)$---with respect to the strong-operator topology: that is, for every  $k\in \Compact(H)$ there is a net $(b_i)_{i\in I}$ of elements of $B$ with the property that $b_i \xi\to k\xi$ for every $\xi\in H$. 

Recall that every bounded linear map $\Compact(H)\to \C$ has the form $k\mapsto \trace(tk)$, for some trace-class operator $t$ on $H$. The following exercise thus implies that the only bounded linear map $\Compact(H)\to \C$ that vanishes on $B$ is the zero map; and then the Hahn--Banach theorem ensures that $B=\Compact(H)$. \hfill\qed

\begin{exercise}\label{ex:trace-k-continuous}
Let $t$ be a trace-class operator on $H$. Prove that the linear map $\Compact(H)\to \C$, $k\mapsto \trace(tk)$, is continuous in the strong-operator topology.
\end{exercise}

\paragraph*{Proof that $C_0(X,\Compact(H))$ has the Stone--Weierstrass property:} (after Elliott--Olesen \cite{Elliott-Olesen} and Takahasi \cite{Takahasi}.)

\bigskip

\noindent Suppose that $B$ is a separating $C^*$-subalgebra of $A=C_0(X,\Compact(H))$. 

\paragraph*{Claim 1:} The irreducible representations of $B$, up to equivalence, are precisely the evaluations $\ev_x:B\to \Compact(H)$ for $x\in X$.

\paragraph*{Proof of Claim 1:} See Exercise \ref{ex:separating-irreps}.

\paragraph*{Claim 2:} For each closed subset $Y\subseteq X$ let 
\[
J_Y = \{b\in B\ |\ b(y)=0\text{ for all }y\in Y\}.
\]
If $\ev_x(J_Y)= 0$ then $x\in Y$.

\paragraph*{Proof of Claim 2:} Suppose that $\ev_x(J_Y)=0$, so that the irreducible representation $\ev_x$ of $B$ factors through the quotient $B/J_Y$. Up to isomorphism we have $B/J_Y\subseteq C_0(Y,\Compact(H))$, and applying Theorem \ref{thm:reps} we conclude that the irreducible representation $\ev_x$ of $B/J_Y$ extends to an irreducible representation of $C_0(Y,\Compact(H))$. The irreducible representations of $C_0(Y,\Compact(H))$ are the $\ev_y$s for $y\in Y$, and so $x\in Y$.

\paragraph*{Claim 3:} If $I$ and $J$ are ideals in a $C^*$-algebra $C$ then $I+J$ is an ideal; and for each $c\in I+J$ and each $\epsilon>0$ we can write $c=c_I+c_J$, where $c_I\in I$, $c_J\in J$, $\|c_I\|<(1+\epsilon)\|c\|$, and $\|c_J\|<(2+\epsilon)\|c\|$.

\paragraph*{Proof of Claim 3:} The proof that $I+J$ is an ideal is mostly routine, except for the assertion that $I+J$ is closed in the norm; to prove this, note that the quotient mapping $A\to A/J$, being a homomorphism of $C^*$-algebras, is continuous and maps the $C^*$-algebra $I$ onto a closed subalgebra of $A/J$. 

For the second assertion: for each $c\in I+J$ we can of course write $c = c_I' + c_J'$ for some $c_I'\in I$ and $c_J'\in J$. The canonical isomorphism $(I+J)/J \cong I/(I\cap J)$ is an isomorphism of $C^*$-algebras, and is therefore isometric, which implies that
\[
\inf_{j\in J} \| c+j\| = \inf_{k\in I\cap J}\|c_I'+k\|.
\]
The left-hand side is at most $\|c\|$, so we can find $k\in I\cap J$ such that $\|c_I'+k\|< (1+\epsilon)\|c\|$. Set $c_I=c_I'+k\in I$, and $c_J=c-c_I=c_J'-k\in J$. We have $\|c_I\|< (1+\epsilon)\|c\|$ by construction, and $\|c_J\|\leq \|c\|+\|c_I\|<(2+\epsilon)\|c\|$.

\paragraph*{Claim 4:} $J_Y+J_Z=J_{Y\cap Z}$ and $J_Y\cap J_Z=J_{Y\cup Z}$ for all closed subsets $Y,Z\subseteq X$.

\paragraph*{Proof of Claim 4:} The equality $J_Y\cap J_Z=J_{Y\cup Z}$ follows immediately from the definitions. The inclusion $J_Y+J_Z\subseteq J_{Y\cap Z}$ is also obvious. For the reverse inclusion it will suffice (by Theorem \ref{thm:ideal-irreps}) to show that every irreducible representation of $B$ that annihilates the ideal $J_Y+J_Z$ also annihilates $J_{Y\cap Z}$. If $\ev_x$ annihilates $J_Y+J_Z$ then $\ev_x(J_Y)=\ev_x(J_Z)=0$, so Claim 2 ensures that $x\in Y\cap Z$, and consequently $\ev_x(J_{Y\cap Z})=0$, as required.

\paragraph*{Claim 5:} $B$ is a $C_b(X)$-submodule of $A$, where $C_b(X)$ denotes the algebra of bounded, continuous functions $X\to \C$. 

\paragraph*{Proof of Claim 5 \cite{Elliott-Olesen}:} Every element of $C_b(X)$ can be written as a linear combination of functions $f$ satisfying $f(X)\subseteq[0,1]$, so it will suffice to prove that $fb\in B$ for all such $f$ and all $b\in B$. Given such an $f$, a positive integer $n$, and $i\in \{0,1,\ldots,n\}$, we consider the closed subset
\[
Y_i = \left\{x\in X\ \middle|\ f(x)\not\in \left(\frac{i-1}{n},\frac{i+1}{n}\right) \right\}
\]
of $X$. Our assumption on $f$ ensures that $\bigcap_{i=0}^n Y_i=\emptyset$, and so Claim 4 implies that $J_{Y_0}+\cdots +J_{Y_n}=J_\emptyset=B$.

Now take $b\in B$. Repeated application of Claim 3 (with $\epsilon$ small enough that $(2+\epsilon)(1+\epsilon)^{n-2}<3$) shows that we can write $b=b_0+b_1 + \cdots + b_n$, where $b_i\in J_{Y_i}$ and $\|b_i\|<3\|b\|$ for each $i$. Define
\(
c_n = \sum_{i=0}^n \textstyle\frac{i}{n}b_i\in B.
\) 
For each $x\in X$ we have
\[
\begin{aligned}
\| (fb)(x)-c_n(x)\| & =  \left\| \sum_{i=0}^n (f(x) - \textstyle\frac{i}{n})b_i(x) \right\| \leq \sum_{i=0}^n \left|f(x)-\textstyle\frac{i}{n}\right| \|b_i(x)\|.
\end{aligned}
\]
If $x\in Y_i$ then $b_i(x)=0$, because $b_i\in J_{Y_i}$.  If $x\not\in Y_i$ then $f(x)\in \left(\frac{i-1}{n},\frac{i+1}{n}\right)$, and consequently $\left|f(x)-\frac{i}{n}\right|<\frac{1}{n}$. This second possibility arises for at most two $i$s, because each real number lies in at most two of the intervals $\left(\frac{i-1}{n},\frac{i+1}{n}\right)$. Recalling that $\|b_i\|<3\|b\|$, we continue the above computation to find
\[
\|(fb)(x)-c_n(x)\| \leq 2\times \textstyle\frac{1}{n}\times 3\|b\|.
\]
Thus the sequence $c_n\in B$ converges to $fb\in C_0(X,\Compact(H))$, and since $B$ is closed in the norm we conclude that $fb\in B$.\hfill\qed

\paragraph*{Claim 6:} $B=A$.

\paragraph*{Proof of Claim 6 \cite{Takahasi}:} Fix $a\in A$ and $\epsilon>0$. Let $Y=\{x\in X\ |\ \|a(x)\|\geq \epsilon\}$ (a compact subset of $X$), and let $U_0 = X\setminus Y$. 

For each $x\in X$ the map $\ev_x:B\to \Compact(H)$ is surjective (since $\ev_x$ is an irreducible representation of $B$, and $\Compact(H)$ has the SWP). So for each $x\in Y$ we can find $b_x\in B$ with $b_x(x)=a(x)$, and then by continuity we can find an open neighbourhood $U_x$ of $x$ such that $\|b_x(y)-a(y)\|<\epsilon$ for all $y\in U_x$. By compactness we can find a finite subset $\{x_1,\ldots,x_n\}$ of $Y$ such that $Y\subseteq \bigcup_{i=1}^n U_{x_i}$. To simplify the notation we set $b_i\coloneqq b_{x_i}$ and $U_i \coloneqq U_{x_i}$ for $i=1,\ldots,n$. The open sets $U_0,U_1,\ldots,U_n$ cover $X$, and we let $\{h_0,h_1,\ldots,h_n\}$ be a partition of $1$ subordinate to this cover.\footnote{This means that the $h_i$s are continuous functions $X\to [0,1]$; $h_i(x)=0$ for all $x\not\in U_i$; and $\sum_{i=0}^n h_i(x)=1$ for all $x\in X$. Finite open covers of  locally compact Hausdorff spaces always admit such partitions of $1$.} Define $b=\sum_{i=1}^n h_i b_i$. Claim 5 ensures that $b\in B$, and for each $x\in X$ we have
\begin{multline*}
 \| (a - b)(x) \|  = \left\| \left(ah_0 + \sum_{i=1}^n (a-b_i)h_i\right)(x)\right\| 
\\
\leq h_0(x)\| a(x) \| + \sum_{i=1}^n h_i(x)\|a(x)-b_i(x)\|  < \sum_{i=0}^n h_i(x) \epsilon = \epsilon.
\end{multline*}
Since $B$ is closed in $A$ we conclude that $B=A$. \hfill\qed

\paragraph*{Some exercises}

\begin{exercise}\label{ex:SW-extensions}
Suppose that $A$ is a $C^*$-algebra with an ideal $J$, such that  both $J$ and $A/J$ have the Stone--Weierstrass property. Prove that $A$ has the Stone--Weierstrass property. Deduce that if we have $A=J_0\supseteq J_1\supseteq \cdots \supseteq J_n=0$, where each $J_i$ is an ideal in $J_{i-1}$ and each of the quotients $J_{i-1}/J_i$ has the Stone--Weierstrass property, then $A$ has the Stone--Weierstrass property. 
\end{exercise}

\begin{exercise}\label{ex:SW-plane-example}
Consider the group of $2\times 2$ invertible matrices
\[
W = \langle s,t\rangle
\quad\text{where}\quad s = \bmat{1 & 0 \\ 0 & -1}\quad \text{and}\quad t = \bmat{0 & 1 \\ 1 & 0}.
\]
This group acts on the plane $\R^2$ in the obvious way. (It is the group of symmetries of the square with vertices $(\pm 1,0)$, $(0,\pm 1)$.)

For each $x\in \R^2$ and each $w\in W$ we define $I_{w,x}=w$, viewed now as a unitary operator on $\C^2$. Let $A=C_0(\R^2,\Mat_2)^W$, where $\Mat_2=\Compact(\C^2)$ is the $C^*$-algebra of $2\times 2$ matrices, and where $W$ acts on $C_0(\R^2,\Mat_2)$ as in \eqref{eq:alpha-def}. Find a finite composition series $A=J_0\supseteq J_1\supseteq\cdots \supseteq 0$ whose quotients $J_{i-1}/J_i$ have the Stone--Weierstrass property, and conclude that $A$ has the Stone--Weierstrass property.
\end{exercise}

\paragraph*{Outline of the proof of Stone--Weierstrass for an arbitrary CCR $A$:} Let $A$ be a CCR $C^*$-algebra. 
\begin{enumerate}[$\bullet$]
\item Show that if $A$ has a (possibly infinite) composition series whose quotients all have the Stone--Weierstrass property, then $A$ has the Stone--Weierstrass property. 
\item Show that $A$ has a composition series whose quotients are CCR with Hausdorff spectrum.
\item Every CCR $C^*$-algebra $A$ with Hausdorff spectrum $\widehat{A}$ is isomorphic to a $C^*$-algebra of $C_0$-sections of a \emph{continuous field} of $C^*$-algebras of compact operators.
\item The proof that we gave for $A=C_0(X,\Compact(H))$ adapts easily to general continuous fields.
\end{enumerate}
See  \cite[11.1.6]{Dixmier-Cstar-fr} for details.

\section{The reduced $C^*$-algebra of a real reductive group I}

We will now briefly explain how the the results of the previous section give a Fourier-type isomorphism between the reduced $C^*$-algebra of a real reductive group, and a certain space of compact-operator-valued functions on a parameter space of representations. See  \cite{Knapp-overview,WallachI,WallachII} for background on representations of real reductive groups, and see \cite{CCH} for details of this $C^*$-algebra computation.

Let $G$ be a real reductive group. Fix a Levi factor $L$ of a parabolic subgroup $P$ of $G$. (Example: $G=\GL_n(\R)$, $P$ is a subgroup of block-upper-triangular matrices, and $L$ is the corresponding subgroup of block-diagonal matrices.) The group $L$ has a Langlands decomposition $L=M_L\times A_L$, where $M_L$ has compact centre, and $A_L$ is isomorphic to its Lie algebra $\germ{a}_L$ via the exponential map. 

Let $\sigma$ be a \emph{discrete-series} representation of $M_L$: that is, $\sigma$ is an irreducible unitary representation of $M_L$ on a Hilbert space $H_\sigma$, for which the matrix coefficients $m\mapsto \langle \sigma(m)\xi\, |\, \eta\rangle$ are $L^2$ functions on $M_L$. Note that such representations might exist (\emph{e.g.}, if $M_L$ is compact, or if $M_L\cong \SL_2(\R)$), or they might not (\emph{e.g.}, if $M_L\cong \SL_n(\R)$ for $n\geq 3$.)

Each $\R$-linear functional $\chi\in \germ{a}_L^*$ gives a unitary character $\chi:e^x\mapsto e^{i\chi(x)}$ of $A_L$. Twisting $\sigma$ by these characters, we get a family of unitary representations $\{\sigma\otimes \chi\ |\ \chi\in \germ{a}_L^*\}$ of $L$.

The parabolic subgroup $P$ decomposes as a semidirect product $L\ltimes N$, where $N$ is the unipotent radical of $P$. The unitary representations $\sigma\otimes\chi$ of $L$ can be pulled back, along the quotient map $P\to L$, to give unitary representations of $P$. These representations then produce unitary representations $\Ind_P^G(\sigma\otimes\chi)$ of $G$, via unitary induction. This procedure for turning representations of $L$ into representations of $G$ is called \emph{parabolic induction}.

The parabolically induced representation $\Ind_P^G(\sigma\otimes\chi)$ is initially defined on a Hilbert space of $P$-equivariant functions $G\to H_\sigma$; the $P$-action on $H_\sigma$ depends on $\chi$, so the Hilbert space does likewise. But the Iwasawa decomposition $G=KP$, where $K$ is a suitable maximal compact subgroup of $G$, implies that restriction of functions from $G$ to $K$ gives a bijection between $P$-equivariant functions on $G$, and $P\cap K$-equivariant functions on $K$. Since $P\cap K$ is contained in $M_L$, on which the representations $\sigma\otimes\chi$ all agree, we conclude that the representations $\Ind_P^G(\sigma\otimes \chi)$ can all be realised on the same Hilbert space. We shall denote this space by $\Ind_P^G H_\sigma$. (This is the \emph{compact picture} of parabolic induction.)

Each of the representations $\Ind_P^G(\sigma\otimes\chi)$ is {tempered}, and thus induces a $C^*$-algebra representation $C^*_r(G)\to \Bounded(\Ind_P^G H_\sigma)$ (\emph{cf}.~Theorem \ref{thm:tempered}.) The fact that $\sigma$ is a square-integrable representation of $M_L$, and that $P$ is co-compact in $G$, ensures that $C^*_r(G)$  acts on $\Ind_P^G H_\sigma$ by compact operators. Moreover, a version of the Riemann--Lebesgue lemma ensures that for each $f\in C^*_r(G)$ the function $\chi\mapsto \Ind_P^G(\sigma\otimes\chi)(f)$ is a $C_0$ function, from $\germ{a}_L^*$ to $\Compact(\Ind_P^G H_\sigma)$. We thus have a homomorphism of $C^*$-algebras
\[
\pi_{(L,\sigma)} : C^*_r(G)\to C_0(\germ{a}_L^*,\Compact(\Ind_P^G H_\sigma)).
\]

This map $\pi_{(L,\sigma)}$ is, in general, neither injective nor surjective. The failure of injectivity can be remedied by considering all possible pairs $(L,\sigma)$ at once, as we shall see below. The failure of surjectivity of $\pi_{(L,\sigma)}$ is linked to the existence of intertwining operators between the representations $\Ind_P^G(\sigma\otimes \chi)$. The  Weyl group $\operatorname{Norm}_G(A_L)/L$ acts on the unitary duals of both $M_L$ and $A_L$ by conjugation, and $W_\sigma$ denotes the stabiliser in this Weyl group of the isomorphism class of $\sigma$. Knapp and Stein's theory of normalised intertwining operators gives, for each $w\in W_\sigma$ and each $\chi\in \germ{a}_L^*$, a unitary operator $I_{w,\chi}$ on $\Ind_P^G H_\sigma$ satisfying 
\begin{equation}\label{eq:I-G-intertwining}
I_{w,\chi} \Ind_P^G(\sigma\otimes \chi)(g) = \Ind_P^G(\sigma\otimes w\chi)(g)I_{w,\chi}
\end{equation} 
for all $g\in G$. For each $w\in W_\sigma$ the map $\chi\mapsto I_{w,\chi}$ is strong-operator continuous, and Knapp has shown in \cite{Knapp-Commutativity} that we can arrange things so that $I_{w_2,w_1\chi}I_{w_1,\chi}=I_{w_2 w_1,\chi}$ for all $w_1,w_2\in W_\sigma$ and all $\chi\in \germ{a}_L^*$. That is to say, the operators $I_{w,\chi}$ define an {action} of $W_\sigma$ on the pair $(\germ{a}_L^*,\Ind_P^G H_\sigma)$, in the sense of Definition \ref{def:XH-action} (\emph{cf.}~Exercise \ref{ex:XH-action}.) This action induces an action of $W_\sigma$ on the $C^*$-algebra $C_0(\germ{a}_L^*,\Compact(\Ind_P^G H_\sigma))$ as in \eqref{eq:alpha-def}, and we let $C_0(\germ{a}_L^*,\Compact(\Ind_P^G H_\sigma))^{W_\sigma}$ denote the fixed-point subalgebra for this action. The intertwining property \eqref{eq:I-G-intertwining} of the operators $I_{w,\chi}$ ensures that the homomorphism $\pi_{(L,\sigma)}$ maps $C^*_r(G)$ into this fixed-point algebra.

\begin{theorem}\label{thm:Plancherel}
Let $G$ be a real reductive group. The maps $\pi_{(L,\sigma)}$ assemble to give an isomorphism of $C^*$-algebras
\[
\bigoplus_{[L,\sigma]} {\pi_{(L,\sigma)}}: C^*_r(G) \xrightarrow{\cong} \bigoplus_{[L,\sigma]} C_0(\germ{a}_L^*, \Compact(\Ind_P^G H_\sigma))^{W_\sigma},
\]
where the direct sum is over the set of $G$-conjugacy classes of pairs $(L,\sigma)$ as above.
\end{theorem}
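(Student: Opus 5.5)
The plan is to use Kaplansky's Stone--Weierstrass theorem (Theorem \ref{thm:SW}). Write $\Pi=\bigoplus_{[L,\sigma]}\pi_{(L,\sigma)}$ and $\mathcal{A}=\bigoplus_{[L,\sigma]} C_0(\germ{a}_L^*,\Compact(\Ind_P^G H_\sigma))^{W_\sigma}$. The algebra $\mathcal{A}$ is CCR, since each summand is CCR (Theorem \ref{thm:fixed-irreps}) and direct sums of CCR algebras are CCR. Hence it will suffice to prove three things. First, $\Pi$ really lands in $\mathcal{A}$; in particular the norms $\|\pi_{(L,\sigma)}(f)\|$ must tend to $0$ along the index set. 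Second, $\Pi$ is injective, so that (being automatically isometric) its image $B=\Pi(C^*_r(G))$ is a closed subalgebra of $\mathcal{A}$ isomorphic to $C^*_r(G)$. Third, $B$ is separating in $\mathcal{A}$. Kaplansky's theorem then gives $B=\mathcal{A}$.

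For the first point, I would prove the $C_0$ decay in the index for $f$ in the dense subalgebra of $K$-finite functions $e_F\ast C_c(G)\ast e_F$ (Exercise \ref{ex:isotypical-approximation}). Only finitely many classes $[L,\sigma]$ have induced representations containing a given finite set $F$ of $K$-types. This uses Harish-Chandra's finiteness of discrete series with a given $K$-type, combined with Frobenius reciprocity for the compact picture. For such $f$, all but finitely many components of $\Pi(f)$ vanish. Density then gives the statement in general.

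For injectivity and separation, I would compute $\widehat{\mathcal{A}}$ using Exercise \ref{ex:direct-sum-irreps} and Theorem \ref{thm:fixed-irreps}. Its points are triples $([L,\sigma], W_\sigma\chi,\rho)$ with $\rho$ an irreducible representation of the stabiliser $W_{\sigma,\chi}$ occurring in $I_\chi$. The corresponding representation of $B$ is $f\mapsto \Ind_P^G(\sigma\otimes\chi)(f)$ restricted to the $\rho$-isotypic multiplicity space $\HS(\rho,I_\chi)^{W_{\sigma,\chi}}$. The representation-theoretic input I would quote has three parts (Harish-Chandra, Knapp--Stein, Knapp--Zuckerman):
\begin{enumerate}[(a)]
\item every tempered irreducible representation of $G$ occurs as a summand of some $\Ind_P^G(\sigma\otimes\chi)$;
\item the commutant of $\Ind_P^G(\sigma\otimes\chi)(G)$ is spanned by the operators $I_{w,\chi}$ for $w\in W_{\sigma,\chi}$, so that $\Ind_P^G(\sigma\otimes\chi)\cong\bigoplus_\rho H_\rho\otimes \HS(\rho,I_\chi)^{W_{\sigma,\chi}}$ with the multiplicity spaces irreducible and pairwise inequivalent;
\item $\Ind_P^G(\sigma_1\otimes\chi_1)$ and $\Ind_P^G(\sigma_2\otimes\chi_2)$ have a common irreducible constituent only if $(L_1,\sigma_1,\chi_1)$ and $(L_2,\sigma_2,\chi_2)$ are $G$-conjugate, in which case the representations are equivalent via an $I_{w,\chi}$.
\end{enumerate}

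Given this input, (b) together with Theorem \ref{thm:SvN} shows that each irreducible representation of $\mathcal{A}$ restricts to an irreducible representation of $B$. Indeed, the restriction corresponds to a tempered irreducible $\Ind$-constituent, through Lemma \ref{lem:Ghat-CstarGhat}. Part (c) shows that inequivalent irreducibles of $\mathcal{A}$ give inequivalent representations of $G$, hence of $B$. Injectivity follows from (a): if $\Pi(f)=0$, then $f$ vanishes in every tempered irreducible representation. By Theorem \ref{thm:tempered} and Lemma \ref{lem:Ghat-CstarGhat} these are all the irreducibles of $C^*_r(G)$, so $f=0$ by Theorem \ref{thm:reps}(2).

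The main obstacle is the representation-theoretic step (b)--(c). One has to match the abstract multiplicity spaces $\HS(\rho,I_\chi)^{W_{\sigma,\chi}}$ with the actual decomposition of the induced representation. This requires that the Knapp--Stein operators $I_{w,\chi}$, $w\in W_{\sigma,\chi}$, span the commutant and not merely lie in it; if the $R$-group cocycle were nontrivial, they might satisfy only a projective relation. Here I would rely on Knapp's normalisation making $w\mapsto I_{w,\chi}$ a genuine representation, as stated before the theorem, together with Knapp--Stein's commuting algebra theorem.
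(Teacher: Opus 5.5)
Your proposal is correct and follows the same route as the paper. The direct-sum property comes from Harish-Chandra's finiteness of $K$-types across the classes $[L,\sigma]$, together with density of $K$-finite elements. Injectivity comes from the Langlands--Trombi theorem that every tempered irreducible occurs in some $\Ind_P^G(\sigma\otimes\chi)$. Surjectivity comes from Kaplansky's Stone--Weierstrass theorem, applied to the CCR algebra $\bigoplus C_0(\germ{a}_L^*,\Compact(\Ind_P^G H_\sigma))^{W_\sigma}$.

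Your inputs (b) and (c) simply unpack the single fact the paper quotes: Harish-Chandra's theorem that the $I_{w,\chi}$ span all intertwiners between the induced representations. Your explicit verification of the two separating conditions, via Theorem \ref{thm:SvN} and Theorem \ref{thm:fixed-irreps}, fills in the step the paper leaves implicit.
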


\begin{proof}[Proof (outline)]
We must show that the homomorphism $\bigoplus \pi_{(L,\sigma)}$ has the following properties:
\begin{enumerate}[\rm(1)]
\item the homomorphism maps into the direct \emph{sum}: \emph{i.e.}, for each $a\in C^*_r(G)$ and each $\epsilon>0$ there are, up to conjugacy, only finitely many $(L,\sigma)$s for which $\sup_{\chi \in \germ{a}_L^*}\|\Ind_P^G(\sigma\otimes \chi)(a)\|\geq \epsilon$;
\item the homomorphism is injective; that is, for each $a\in C^*_r(G)$ there is some $(L,\sigma)$ and some $\chi\in \germ{a}_L^*$ with $\Ind_P^G(\sigma\otimes \chi)(a)\neq 0$; and
\item the homomorphism maps surjectively onto the direct sum.
\end{enumerate}
Each of these three assertions relies on a major theorem in representation theory. 

The assertion (1) follows from a theorem of Harish-Chandra \cite[Lemma 70]{HC-DSII}, saying that each irreducible representation of a maximal compact subgroup $K$ of $G$ occurs in $\Ind_P^G(\sigma\otimes \chi)$ for only finitely many $[L,\sigma]$s. This implies that for each $\rho\in \widehat{K}$ we have $\pi_{(L,\sigma)}(e_\rho C^*_r(G))=0$ for all but finitely many $[L,\sigma]$; here $e_\rho$ is the $\rho$-isotypical projection, as in \eqref{eq:e-rho}. Since the span of the subspaces $e_{\rho}C^*_r(G)$  is dense in $C^*_r(G)$ (\emph{cf}.~Exercise \ref{ex:isotypical-approximation}), this proves assertion (1). 

Assertion (2) follows from a theorem of Langlands \cite[Lemma 4.10]{Langlands} and Trombi \cite[Corollary 6.2]{Trombi}, which says that each tempered irreducible unitary representation of $G$---and therefore, each irreducible representation of $C^*_r(G)$---occurs as a direct-summand in some $\Ind_P^G(\sigma\otimes \chi)$. 

Finally, assertion (3) follows from another theorem of Harish-Chandra \cite[38 Theorem 1]{HC-HAIII}, which says that the operators $I_{w,\chi}$ (and linear combinations thereof) are the only intertwiners between the representations $\Ind_P^G(\sigma\otimes \chi)$. On the other hand, Theorem \ref{thm:fixed-irreps} implies that the $I_{w,\chi}$s are also intertwiners between the irreducible representations of $\bigoplus_{[L,\sigma]} C_0(\germ{a}_L^*,\Compact(\Ind_P^G H_\sigma))^{W_\sigma}$, and so the Stone--Weierstrass theorem ensures that the image of $C^*_r(G)$ under the homomorphism $\bigoplus_{[L,\sigma]}\pi_{(L,\sigma)}$ is equal to the direct sum.
\end{proof}

\section{Hilbert modules and Morita equivalence}\label{sec:Morita}

In many applications of $C^*$-algebra theory, including the representation-theoretic computation that is our focus here, the $C^*$-algebras themselves are less important than certain invariants of these $C^*$-algebras, such as their spectra and $K$-theory. Morita equivalence gives us a way of replacing one $C^*$-algebra by another while keeping these invariants unchanged.

In algebra Morita equivalence is usually defined as an equivalence of module categories, and one proves that every such equivalence is given by tensor product with a certain kind of bimodule. In $C^*$-algebra theory the definition in terms of bimodules came first, and this is the course that we shall follow. Much of this material is due to Rieffel; see \cite{Rieffel-survey} for an overview and more detailed references, and see \cite{Raeburn-Williams} or \cite{Lance} for a detailed exposition.

\subsection{Hilbert modules}

\begin{definition}\label{def:Hilbert-module}
A   \emph{right Hilbert module} over a $C^*$-algebra $B$ is a $\C$-vector space $E$ that is a right $B$-module, and that is equipped with a $B$-valued inner product $\langle\ |\ \rangle : E\times E\to B$ satisfying:
\begin{enumerate}[\rm(1)]
\item $\langle \xi\,|\,\eta \rangle$ is linear in $\eta$ and conjugate-linear in $\xi$.
\item $\langle \xi b_1\, |\, \eta b_2\rangle = b_1^*\langle \xi\, |\, \eta\rangle b_2$ for all $\xi,\eta\in E$ and $b_1,b_2\in B$.
\item $\langle \eta\, |\, \xi\rangle = \langle \xi\, |\, \eta\rangle^*$ for all $\xi,\eta\in E$.
\item $\langle \xi\, |\, \xi\rangle \geq 0$ for all $\xi\in E$, and $\langle \xi\, |\, \xi\rangle =0$ only if $\xi=0$.
\item $E$ is complete in the norm $\|\xi\|_E=\|\langle \xi\, |\, \xi\rangle\|_B^{1/2}$.
\end{enumerate}
(The notion of a \emph{left} Hilbert module  is defined in a similar way. In these notes `Hilbert module' will mean `right Hilbert module' unless otherwise indicated.)
\end{definition}

\begin{remark}
The conditions (1)--(4) in Definition \ref{def:Hilbert-module} imply that the function $\|\ \|_E$ in (5) is in fact a norm on $E$. A standard way to prove this involves first proving the inequality $\langle \eta\, |\, \xi\rangle \langle \xi, |\, \eta\rangle \leq \| \xi\|^2_E \langle \eta\, |\, \eta\rangle$ in the $C^*$-algebra $B$ (see,  \emph{e.g.},  \cite[Lemma 2.5]{Raeburn-Williams}.) Taking norms in this $C^*$-algebra-valued Cauchy--Schwarz inequality gives a Cauchy--Schwarz inequality $\| \langle \xi\, |\, \eta\rangle\| \leq \|\xi\| \|\eta\|$, which implies that $\|\ \|_E$ satisfies the triangle inequality. \end{remark}

\begin{example}\label{example:B-over-B}
Each $C^*$-algebra $B$ is a Hilbert module over itself, with inner product $\langle b_1\,|\,b_2\rangle = b_1^*b_2$. The Hilbert-module norm is equal to the $C^*$-algebra norm. 
\end{example}

\begin{example}
Let $H$ be a Hilbert space (regarded as a {right} module over $\C$). The $\C$-valued inner product on $H$ makes $H$ into a  Hilbert $\C$-module.
\end{example}

\begin{example}\label{example:CXH}
Let $H$ be a Hilbert space, and let $X$ be a locally compact Hausdorff space. Then $C_0(X,H)$ is a Hilbert module over the $C^*$-algebra $C_0(X)$: the module structure is defined by pointwise multiplication, and the $C_0(X)$-valued inner product is $\langle \xi\, |\, \eta\rangle(x)\coloneqq \langle \xi(x)\, |\, \eta(x)\rangle$ (for $\xi,\eta\in C_0(X,H)$ and $x\in X$.)
\end{example}

The following fact is sometimes useful when working with Hilbert modules:

\begin{lemma}\label{lem:E=EB}
If $E$ is a  Hilbert $B$-module, then every element of $E$ can be written in the form $\xi b$ where $\xi\in E$ and $b\in B$. 
\end{lemma}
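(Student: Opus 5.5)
The plan is to apply Cohen--Hewitt factorisation (Theorem \ref{thm:Cohen-Hewitt}, in its right-module version) to $E$ regarded as a right Banach $B$-module. That theorem says $EB=\{\xi b\ |\ \xi\in E,\ b\in B\}$ is a closed submodule of $E$. It therefore suffices to check two things. First, that $E$ is a right Banach $B$-module, i.e.\ $\|\xi b\|_E\leq \|\xi\|_E\|b\|_B$. Second, that $EB$ is dense in $E$.

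For the norm estimate, we compute $\|\xi b\|_E^2=\|b^*\langle\xi\,|\,\xi\rangle b\|_B$. Since $\langle \xi\,|\,\xi\rangle\leq \|\langle\xi\,|\,\xi\rangle\|\cdot 1$ in the unitisation, conjugation by $b$ preserves this inequality, giving $b^*\langle\xi\,|\,\xi\rangle b\leq \|\xi\|^2 b^*b$. Taking norms of positive elements yields $\|\xi b\|^2\leq \|\xi\|^2\|b\|^2$. (Equivalently, this follows from the $C^*$-algebra Cauchy--Schwarz remark above.)

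The density step is the main point, and I would handle it with an approximate unit $(u_i)$ of $B$ (positive, $\|u_i\|\leq 1$, $bu_i\to b$ for all $b\in B$). For $\xi\in E$ we expand
\[
\langle \xi-\xi u_i\,|\,\xi-\xi u_i\rangle = \langle\xi|\xi\rangle - u_i\langle\xi|\xi\rangle - \langle\xi|\xi\rangle u_i + u_i\langle\xi|\xi\rangle u_i .
\]
Each of the three correction terms converges in norm to $\langle\xi|\xi\rangle$: for the left-multiplied terms, take adjoints of $bu_i\to b$ with $b=\langle\xi|\xi\rangle$ self-adjoint. Hence $\|\xi-\xi u_i\|_E\to0$, so $EB$ is dense in $E$. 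Since it is also closed by Cohen--Hewitt, $EB=E$.

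The only possible obstacle is that approximate units have not been introduced earlier in the paper. If one prefers to avoid them, a substitute is the functional calculus element $u_n=\langle\xi|\xi\rangle(\langle\xi|\xi\rangle+\tfrac1n)^{-1}$. The same expansion, combined with spectral estimates for the single positive element $a=\langle\xi|\xi\rangle$ (giving $\|a-au_n\|\leq\tfrac1n$), shows $\xi u_n\to\xi$.
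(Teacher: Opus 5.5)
Your proof is correct and takes essentially the same approach as the paper: Cohen--Hewitt factorisation shows that $EB$ is closed, and density of $EB$ in $E$ then gives $EB=E$. The paper cites Raeburn--Williams for the density step (which in fact proves the stronger factorisation $\xi=\eta\langle\eta\,|\,\eta\rangle$), whereas you prove density directly via an approximate unit or functional calculus, and you also check the Banach-module estimate $\|\xi b\|\leq\|\xi\|\,\|b\|$ that the paper leaves implicit.
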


\begin{proof}[Proof (outline)]
This follows from Theorem \ref{thm:Cohen-Hewitt}, combined with the assertion that the elements $\xi b$ have dense span in $E$. For a proof of the latter assertion see \cite[Proposition 2.31]{Raeburn-Williams} (which in fact proves that each $\xi\in E$ can be written as $\eta\langle\eta\, |\, \eta\rangle$ for a unique $\eta\in E$.)
\end{proof}

\begin{exercise}\label{ex:full}
Let $E$ be a right Hilbert $B$-module. 
\begin{enumerate}[\rm(1)]
\item Prove that the subset $J=\overline{\lspan}\{\langle \xi\,|\,\eta\rangle\in B\ |\ \xi,\eta\in E\}$ is an ideal in $B$. 
\end{enumerate}
We say that $E$ is \emph{full} over $B$ if this ideal $J$ is equal to $B$. Note that $E$ is tautologically a full right Hilbert module over $J$.
\begin{enumerate}
\item[\rm(2)] Prove that $C_0(X,H)$ is full as a right Hilbert $C_0(X)$-module, if $H\neq 0$.
\end{enumerate}
\end{exercise}

\begin{exercise}\label{ex:V-tensor-E}
\begin{enumerate}[\rm(1)]
\item Let $E$ and $F$ be Hilbert $B$-modules. On the $B$-module direct sum $E\oplus F$ we define a $B$-valued inner product by $\langle (\xi_1,\eta_1)\, |\, (\xi_2,\eta_2)\rangle \coloneqq \langle \xi_1\, |\, \xi_2\rangle + \langle \eta_1\, |\, \eta_2\rangle$. Prove that this inner product makes $E\oplus F$ into a Hilbert $B$-module.
\item Let $V$ be a finite-dimensional Hilbert space, and let $E$ be a Hilbert module over a $C^*$-algebra $B$. On the  tensor product $V\otimes_{\C} E$ we define $(v\otimes \xi)b\coloneqq v\otimes \xi b$ and $\langle v_1\otimes\xi_1\, |\, v_2\otimes \xi_2\rangle \coloneqq \langle v_1\, |\, v_2\rangle \langle \xi_1\, |\, \xi_2\rangle\in B$. Prove that these definitions make $V\otimes_{\C} E$ into a Hilbert $B$-module.
\end{enumerate}
\end{exercise}

\subsection{Operators on Hilbert modules}

\begin{definition}
Let $E$ and $F$ be Hilbert modules over a $C^*$-algebra $B$.
\begin{enumerate}[\rm(1)]
\item A linear map $t:E\to F$ is said to be \emph{adjointable} if there is a linear map $t^*:F\to E$ satisfying $\langle \eta\, |\, t\xi \rangle = \langle t^*\eta\, |\, \xi \rangle$ for all $\xi\in E$ and $\eta\in F$. The set of all such maps is denoted $\Adjointable_B(E,F)$.
\item The \emph{$B$-compact} (or just \emph{compact}) linear maps $E\to F$ are those in the operator-norm closure of the linear span of the linear maps $\ket{\eta}\bra{\xi}:E\to F$, $\zeta\mapsto \eta\langle\xi\ |\ \zeta\rangle$, for $\eta\in F$ and $\xi,\zeta\in E$. The set of all such maps is denoted $\Compact_B(E,F)$.
\end{enumerate}
\end{definition}

Unlike the case of Hilbert spaces, bounded linear maps between Hilbert modules are not necessarily adjointable:

\begin{exercise}\label{ex:non-adjointable}
Regard both the $C^*$-algebra $C([0,1])$ and its ideal $C_0( (0,1])$ as Hilbert modules over $C([0,1])$, as in Example \ref{example:B-over-B}. Prove that the inclusion map $C_0((0,1])\into C([0,1])$ is not adjointable.
\end{exercise}

However, arguments similar to those in the case of $B=\C$ do show:

\begin{proposition}
\begin{enumerate}[\rm(1)]
\item Every adjointable map of Hilbert $B$-modules is bounded and $B$-linear; and every $B$-compact linear map is adjointable.
\item $\Adjointable_B(E,F)$ is an operator-norm-closed subspace of the Banach space of all bounded linear maps from $E$ to $F$; and  $\Adjointable_B(E)\coloneqq \Adjointable_B(E,E)$ is a $C^*$-algebra.
\item $\Compact_B(E,F)$ is an operator-norm-closed subspace of $\Adjointable_B(E,F)$, and $\Compact_B(E)\coloneqq \Compact_B(E,E)$ is an ideal in the $C^*$-algebra $\Adjointable_B(E)$.\hfill\qed
\end{enumerate}
\end{proposition}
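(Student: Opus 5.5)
I would prove the three parts in order, imitating the Hilbert-space arguments and using the Cauchy--Schwarz inequality $\|\langle \xi\,|\,\eta\rangle\|\leq\|\xi\|\,\|\eta\|$ from the Remark after Definition \ref{def:Hilbert-module}.

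For (1), suppose $t$ is adjointable. \textbf{$B$-linearity:} for $\xi\in E$, $b\in B$ and all $\eta\in F$,
\[
\langle\eta\,|\,t(\xi b)\rangle=\langle t^*\eta\,|\,\xi b\rangle=\langle t^*\eta\,|\,\xi\rangle b=\langle\eta\,|\,(t\xi)b\rangle .
\]
Taking $\eta=t(\xi b)-(t\xi)b$ and using positive-definiteness gives $t(\xi b)=(t\xi)b$. \textbf{Boundedness:} I would use the closed graph theorem. If $\xi_n\to\xi$ and $t\xi_n\to\zeta$, then for every $\eta$ we have $\langle\eta\,|\,t\xi_n\rangle=\langle t^*\eta\,|\,\xi_n\rangle\to\langle t^*\eta\,|\,\xi\rangle=\langle\eta\,|\,t\xi\rangle$ by Cauchy--Schwarz. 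The left side also tends to $\langle\eta\,|\,\zeta\rangle$, so $\zeta=t\xi$. \textbf{Compact maps are adjointable:} $\ket{\eta}\bra{\xi}$ has adjoint $\ket{\xi}\bra{\eta}$, since
\[
\langle\zeta'\,|\,\eta\langle\xi\,|\,\zeta\rangle\rangle=\langle\zeta'\,|\,\eta\rangle\langle\xi\,|\,\zeta\rangle=\langle\xi\langle\eta\,|\,\zeta'\rangle\,|\,\zeta\rangle .
\]
Thus finite sums of such maps are adjointable. Closure under norm limits is then (2).

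For (2), I first show $\|t^*\|=\|t\|$. The standard route is
\[
\|t\xi\|^2=\|\langle\xi\,|\,t^*t\xi\rangle\|\leq\|\xi\|\,\|t^*t\xi\|\leq\|\xi\|^2\,\|t^*\|\,\|t\|,
\]
which gives $\|t\|\leq\|t^*\|$, and by symmetry equality. Now let $t_n\in\Adjointable_B(E,F)$ converge in norm to a bounded $t$. Then $(t_n^*)$ is Cauchy, because $\|t_n^*-t_m^*\|=\|t_n-t_m\|$, so it converges to some $s$. Passing to the limit in $\langle\eta\,|\,t_n\xi\rangle=\langle t_n^*\eta\,|\,\xi\rangle$ shows $s=t^*$, so $\Adjointable_B(E,F)$ is closed. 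For $\Adjointable_B(E)$ it is routine that it is an algebra with involution and $\|st\|\leq\|s\|\|t\|$. The $C^*$-identity follows from $\|t\|^2\leq\|t^*t\|\leq\|t^*\|\|t\|=\|t\|^2$, where the first inequality comes from the displayed estimate.

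For (3), $\Compact_B(E,F)$ is closed by definition, and it lies in $\Adjointable_B(E,F)$ by (1) and (2). For the ideal property it suffices, by continuity and linearity, to check rank-one maps:
\[
s\ket{\eta}\bra{\xi}=\ket{s\eta}\bra{\xi},\qquad \ket{\eta}\bra{\xi}s=\ket{\eta}\bra{s^*\xi}
\]
for $s\in\Adjointable_B(E)$. It is also closed under $*$, since $(\ket{\eta}\bra{\xi})^*=\ket{\xi}\bra{\eta}$.

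The main obstacle is the boundedness in (1): the closed graph argument needs $E$ and $F$ complete, which holds, and it needs inner products to separate points, which holds by definiteness. Everything else is bookkeeping.
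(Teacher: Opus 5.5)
Your proof is correct. It is exactly the adaptation of the Hilbert-space ($B=\C$) arguments that the paper has in mind: the paper states this proposition without proof, noting only that those arguments carry over, and your closed-graph argument for boundedness, the identity $\|t^*\|=\|t\|$ via Cauchy--Schwarz, and the rank-one computations for the ideal property are the standard steps.
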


\begin{exercise}\label{ex:K-C_0-H}
Let $H$ be a Hilbert space, and consider the Hilbert $C_0(X)$-module $C_0(X,H)$ (Example \ref{example:CXH}.) Prove that $\Compact_{C_0(X)}(C_0(X,H)) \cong C_0(X,\Compact(H))$.
\end{exercise}

\begin{exercise}\label{exercise:E-dual}
Let $E$ be a Hilbert $B$-module. Regard $B$ as a Hilbert $B$-module as in Example \ref{example:B-over-B}.
\begin{enumerate}[\rm(1)]
\item Prove that $\Compact_B(E,B)=\{ \bra{\xi}:\eta\mapsto \langle \xi\, |\, \eta\rangle\ |\ \xi\in E\}$.
\item Deduce  that the map sending each $b\in B$ to the multiplication operator $m_b:B\xrightarrow{c\mapsto bc} B$ gives an isomorphism of $C^*$-algebras $B\xrightarrow{\cong} \Compact_B(B)$. 
\item Prove that the formula $\llangle k\, |\, l\rrangle\coloneqq k^* l$, for $k,l\in \Compact_B(E,B)$, makes $\Compact_B(E,B)$ into a full Hilbert module over the $C^*$-algebra $\Compact_B(E)$. \emph{Hint:} consider the $C^*$-algebra $\Compact_B(E\oplus B)$, where $E\oplus B$ is a Hilbert $B$-module as in Exercise \ref{ex:V-tensor-E}. In matrix form  we can write 
\[
\Compact_B(E\oplus B) = \begin{bmatrix} \Compact_B(E) & \Compact_B(E,B) \\ \Compact_B(B,E) & \Compact_B(B)\end{bmatrix},
\]
where in each matrix entry, the indicated space of compact operators is embedded isometrically.
\item Now suppose that $E$ is full over $B$. Prove that the map sending $b\in B$ to the map $\Compact_B(E,B)\xrightarrow{k\mapsto m_b\circ k} \Compact_B(E,B)$ is an isomorphism of $C^*$-algebras $B\xrightarrow{\cong} \Compact_{\Compact_B(E)}( \Compact_B(E,B))$.
\item Now consider $\Compact_B(B,E)$. Show that the inner product $\langle k\, |\, l\rangle \coloneqq k^*l$ makes $\Compact_B(B,E)$ into a right Hilbert module over $\Compact_B(B)\cong B$, and that the map sending $\xi\in E$ to the operator $\ket{\xi}:b\mapsto \xi b$ is an inner-product-preserving isomorphism of Hilbert $B$-modules $E\xrightarrow{\cong} \Compact_B(B,E)$.
\end{enumerate}
\end{exercise}

A consideration of operators on Hilbert modules gives a convenient resolution of a point that we left pending in Example \ref{examples-Cstar}(8.ii): the definition of the norm on a crossed-product $C^*$-algebra.

\begin{exercise}\label{ex:crossed-product-norm}
Let $B$ be a $C^*$-algebra equipped with an action $\beta$ of a finite group $W$, by $*$-automorphisms. Consider the tensor product $\ell^2 W\otimes_{\C} B$, which is a Hilbert $B$-module as in Exercise \ref{ex:V-tensor-E}. For each $b\in B$ and $w\in W$,  let $t_{bw}$ be the operator on $\ell^2 W\otimes_{\C} B$ defined by $t_{bw}(\delta_v\otimes a) = \delta_{wv}\otimes \beta_{v^{-1}w^{-1}}(b)a$. (Here $\{\delta_v\ |\ v\in W\}$ denotes the standard orthonormal basis for $\ell^2 W$.) Show that the map $bw\mapsto t_{bw}$ gives an injective homomorphism of $*$-algebras $B\rtimes W\to \Adjointable_B(\ell^2 W\otimes_{\C} B)$, and hence that $\|\sum_w b_w w\|\coloneqq \|\sum_w t_{b_w w}\|_{\Adjointable_B(\ell^2 W\otimes B)}$ is a $C^*$-algebra norm on $B\rtimes W$.
\end{exercise}

\subsection{Morita equivalence}

\begin{definition}\label{def:Morita-bimodule}
Let $A$ and $B$ be $C^*$-algebras. A \emph{Morita equivalence} from $A$ to $B$ is a full Hilbert $B$-module $E$, equipped with an isomorphism of $C^*$-algebras $A\xrightarrow{\cong} \Compact_B(E)$. We say that $C^*$-algebras $A$ and $B$ are \emph{Morita equivalent} (notation: $A\sim B$) if there is a Morita equivalence from $A$ to $B$.\end{definition}

\begin{remark}
Here is an equivalent formulation of the definition: a Morita equivalence from $A$ to $B$ is an $A$-$B$-bimodule $E$ 
that is a full left Hilbert $A$-module (with inner product $[\ |\ ]$) and a full right Hilbert $B$-module (with inner product $\langle\ |\ \rangle$), satisfying $[\xi b\,|\,\eta] = [\xi\,|\,\eta b^*]$, $\langle a\xi\,|\,\eta\rangle = \langle \xi\,|\,a^*\eta\rangle$, and $[\xi\,|\,\eta]\zeta = \xi\langle \eta\,|\,\zeta\rangle$ for all $\xi,\eta,\zeta\in E$, all $a\in A$, and all $b\in B$. The fact that this definition is equivalent to Definition \ref{def:Morita-bimodule} can be established using arguments similar  to those in Exercise \ref{exercise:E-dual}.
\end{remark}

\begin{example}
Let $H$ be a nonzero Hilbert space. 
\begin{enumerate}[\rm(1)]
\item $H$ is a Morita equivalence from $\Compact(H)$ to $\C$.
\item For each locally compact Hausdorff space $X$, $C_0(X,H)$ is a Morita equivalence from $C_0(X,\Compact(H))$  to $C_0(X)$. 
\end{enumerate}
\end{example}

Here are some important facts about Morita equivalence:

\begin{theorem}
\begin{enumerate}[\rm(1)]
\item Morita equivalence is an equivalence relation. {\rm\cite{Rieffel-induced}}
\item $A\sim B$ if and only if $A\otimes \Compact(H)\cong B\otimes \Compact(H)$, so long as $A$ and $B$ have \emph{countable approximate identities} (which is the case for all of the $C^*$-algebras of interest in these notes.) {\rm\cite{BGR}}
\item If $A\sim B$ then $\widehat{A}$ is homeomorphic to $\widehat{B}$, and $A$ and $B$ have isomorphic $K$-theory. {\rm\cite{Rieffel-induced,BGR,Kasparov-equivariant-KK}}
\item $A\sim B$ if and only if $A$ and $B$ have equivalent categories of \emph{operator modules}. {\rm\cite{Blecher-Morita}}
\item If $A$ and $B$ both have $1$, then $A\sim B$ if and only if $A$ and $B$ are Morita equivalent as rings (ie, their categories of purely algebraic modules are equivalent.) {\rm\cite{Beer}}
\end{enumerate}
\hfill\qed
\end{theorem}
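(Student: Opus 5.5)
The statement collects five results, and I would not try to prove all of them in these notes. Parts (2), (4) and (5) rest on substantial outside machinery: Brown's stabilisation theorem, operator-space theory, and classical ring-theoretic Morita theory. For those I would simply cite the references given. The plan is to prove (1) and the spectrum half of (3) directly from the Hilbert-module tools already set up, above all Exercise~\ref{exercise:E-dual}.

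For (1), I would treat the three properties separately.
\begin{enumerate}[\rm(i)]
\item \emph{Reflexivity.} Take $E=B$ as in Example~\ref{example:B-over-B}. It is full, since $B^*B$ has dense span in $B$ (Theorem~\ref{thm:Cohen-Hewitt}), and $B\cong\Compact_B(B)$ by Exercise~\ref{exercise:E-dual}(2).
\item \emph{Symmetry.} Given $A\cong\Compact_B(E)$ with $E$ full, use the dual module $\Compact_B(E,B)$. By Exercise~\ref{exercise:E-dual}(3) it is a full Hilbert $\Compact_B(E)\cong A$-module, and by Exercise~\ref{exercise:E-dual}(4) its compact operators are isomorphic to $B$. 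So it is a Morita equivalence from $B$ to $A$.
\item \emph{Transitivity.} Given $A\cong\Compact_B(E)$ and $B\cong\Compact_C(F)$, form the interior tensor product $E\otimes_B F$. This is the completion of the algebraic tensor product, modulo null vectors, for the $C$-valued form
\[
\langle \xi\otimes\eta\,|\,\xi'\otimes\eta'\rangle=\langle \eta\,|\,\langle\xi\,|\,\xi'\rangle\eta'\rangle .
\]
The step I expect to be the main obstacle is proving this form positive. My approach is to reduce it to the statement that for $\xi_1,\dots,\xi_n\in E$ the matrix $[\langle\xi_i|\xi_j\rangle]$ is positive in $\Mat_n(B)$. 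That should follow by writing it as $T^*T$, where $T=\sum_i \ket{\xi_i}\bra{\delta_i}\in\Compact_B(B^n,E)$, using Exercise~\ref{exercise:E-dual}. Once positivity is in hand, I would check three further points:
\begin{enumerate}[$\bullet$]
\item fullness over $C$ follows from fullness of $E$ and of $F$;
\item the map $k\mapsto k\otimes 1$ sends rank-one operators $\ket{\xi}\bra{\xi'}$ to compact operators, because $\xi'=\xi''b$ (Lemma~\ref{lem:E=EB}) and $b$ is approximated by sums of $\ket{\eta}\bra{\eta'}$;
\item this map is an isomorphism $\Compact_B(E)\cong\Compact_C(E\otimes_BF)$, with injectivity and density of the image following from fullness of $F$.
\end{enumerate}
\end{enumerate}

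For the spectrum in (3), I would use induction of representations. A representation $\pi:B\to\Bounded(H)$ gives a Hilbert space $E\otimes_B H$, by the same construction with $C=\C$, on which $A\cong\Compact_B(E)$ acts. Inducing back along the dual module $\Compact_B(E,B)$ recovers $\pi$, via the isomorphism $\Compact_B(E,B)\otimes_A E\cong B$ given by $\bra{\xi}\otimes\eta\mapsto\langle\xi|\eta\rangle$. Hence induction is a bijection on equivalence classes. Irreducibility is preserved because commutants correspond: an operator commuting with $\pi(B)$ induces $1\otimes t$, and conversely. So Theorem~\ref{thm:SvN} gives the result.

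For the topology, since the open sets of the spectrum are indexed by ideals, it suffices to show that $J\mapsto \overline{\lspan}\{|\xi j\rangle\langle\eta|\}$ is an inclusion-preserving bijection from ideals of $B$ to ideals of $A$. Its inverse is the analogous map built from the dual module, and it should be compatible with the sets $O_J$. Isomorphism of $K$-theory I would only sketch. The linking algebra $\Compact_B(E\oplus B)$ contains $A$ and $B$ as complementary full corners, and full corners have the same $K$-theory, as in \cite{BGR}.
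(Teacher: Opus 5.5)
The notes give no proof of this theorem. It is stated without proof, and each item is delegated to the cited papers. The only piece carried out in the text is symmetry (Exercise~\ref{ex:Morita-symmetric}), and its solution is exactly your dual-module argument via Exercise~\ref{exercise:E-dual}(3)--(4). Reflexivity is likewise implicit in Example~\ref{example:B-over-B} together with Exercise~\ref{exercise:E-dual}(2).

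For transitivity and for the spectrum you reconstruct the standard Rieffel arguments from \cite{Rieffel-induced}: the interior tensor product, induction of representations, and the ideal correspondence $J\mapsto\overline{\lspan}\{\ket{\xi j}\bra{\eta}\}$. These are correct in outline. The paper's approach buys brevity and keeps the notes focused on Theorem~\ref{thm:main}. Yours makes (1) and the topological half of (3) consequences of the Hilbert-module toolkit already developed.

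The cost is a handful of standard facts the notes do not supply, which you should make explicit:
\begin{enumerate}[$\bullet$]
\item Since $\delta_i\notin B^n$ when $B$ is non-unital, take $T=\sum_i \ket{\xi_i}\circ p_i$, with $p_i\colon B^n\to B$ the coordinate projection. You then need the identification $\Compact_B(B^n)\cong\Mat_n(B)$ to transfer positivity of $T^*T$.
\item Before extending $k\mapsto k\otimes 1$ from rank-one operators, you need the bound $\|k\otimes 1\|\le\|k\|$. The same $T$ gives it, via $T^*k^*kT\le\|k\|^2T^*T$.
\item Injectivity and density of the image of $\Compact_B(E)\to\Compact_C(E\otimes_B F)$ really come from $B\to\Compact_C(F)$ being injective and surjective. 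Fullness of $F$ over $C$ is what gives fullness of $E\otimes_BF$.
\item The identification $B\otimes_B H\cong H$ needs $\pi$ to be nondegenerate, which is automatic for irreducibles by Theorem~\ref{thm:Cohen-Hewitt}. It also uses associativity of the interior tensor product.
\item The ``conversely'' in your commutant argument is most easily obtained by inducing back. Alternatively, note that induction is invertible and preserves direct sums, hence preserves irreducibility.
\item The full-corner route to $K$-theory in \cite{BGR} passes through Brown's stabilisation theorem, so it assumes countable approximate identities, as in item~(2). That covers every algebra in these notes.
\end{enumerate}
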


\begin{exercise}\label{ex:Morita-symmetric}
Use Exercise \ref{exercise:E-dual} to prove that Morita equivalence is symmetric: if $A\sim B$ then $B\sim A$.
\end{exercise}

\begin{exercise}\label{ex:Morita-sum}
Suppose that $A_i$ and $B_i$ (indexed by $i\in I$) are two families of $C^*$-algebras, such that $A_i\sim B_i$ for each $i$. Prove that $\bigoplus_i A_i \sim \bigoplus_i B_i$.
\end{exercise}

\subsection{Equivariant Hilbert modules}

Suppose that a finite group $W$ acts on a $C^*$-algebra $B$ by $*$-automorphisms $\beta_w$.

\begin{definition}\label{def:equivariant-Hilbert-module}
A \emph{$W$-equivariant} Hilbert $B$-module is a Hilbert $B$-module $E$ equipped with a family of invertible linear operators $\gamma_w:E\to E$ satisfying $\gamma_w(\xi b) = \gamma_w(\xi)\beta_w(b)$, $\langle \gamma_w \xi\, |\, \gamma_w\eta\rangle = \beta_w(\langle \xi\, |\, \eta\rangle)$, and $\gamma_{w_1}\circ \gamma_{w_2}=\gamma_{w_1w_2}$ for all $w,w_1,w_2\in W$, $\xi,\eta\in E$, and $b\in B$.
\end{definition}

\begin{exercise}\label{ex:W-acts-on-K}
Let $E$ be a $W$-equivariant Hilbert $B$-module. Prove that the formula $\alpha_w(k) \coloneqq \gamma_w k \gamma_w^{-1}$ (for $w\in W$ and $k\in \Compact_B(E)$) defines an action of $W$ on the $C^*$-algebra $\Compact_B(E)$.
\end{exercise}

\begin{example}\label{example:CXH-equivariant}
Suppose that $W$ acts on a pair $(X,H)$ as in Definition \ref{def:XH-action} (\emph{cf.}~Exercise \ref{ex:XH-action}): that is, suppose that $W$ acts on the space $X$ by homeomorphisms, and that we have a strong-operator-continuous family of unitary operators $I_{w,x}$ on the Hilbert space $H$ satisfying $I_{w_1,w_2x}I_{w_2,x}=I_{w_1w_2,x}$ for all $w_1,w_2\in W$ and all $x\in X$. Let $\beta$ denote the induced action of $W$ on $B=C_0(X)$ (\emph{i.e.}, $\beta_w f(x) = f(w^{-1}x)$), and for each $w\in W$ let $\gamma_w$ be the linear map on the Hilbert $C_0(X)$-module $C_0(X,H)$ defined by $\gamma_w\xi (x) \coloneqq I_{w,w^{-1}x}(\xi(w^{-1}x))$. {\bf Exercise:} check that these operators make $C_0(X,H)$ into a $W$-equivariant Hilbert $C_0(X)$-module, and that the $W$-action on $C_0(X,\Compact(H))\cong \Compact_{C_0(X)}(C_0(X,H))$ defined in Exercise \ref{ex:W-acts-on-K} is the same as the one defined in \eqref{eq:alpha-def}.
\end{example}

There is a crossed-product construction for equivariant Hilbert modules, that is compatible with the crossed-product construction for $C^*$-algebras (\emph{cf.}~Examples \ref{examples-Cstar}(8.ii)): 

\begin{definition}\label{def:E-cross-W}
Let $E$ be a $W$-equivariant Hilbert $B$-module. We make the vector space
\(
E\rtimes W \coloneqq \left\{ \sum_{w\in W} \xi_w w\ \middle|\ \xi_w\in E \right\}
\)
into a right $B\rtimes W$-module by defining $(\xi w_1)(b w_2) \coloneqq \xi \beta_{w_1}(b) w_1 w_2$,  and we equip $E\rtimes W$ with the $B\rtimes W$-valued inner product defined by
\[
\langle \xi_1 w_1\, |\, \xi_2 w_2\rangle \coloneqq w_1^{-1}\langle \xi_1\, |\, \xi_2\rangle w_2 = \beta_{w_1^{-1}}(\langle \xi_1\, |\, \xi_2\rangle ) w_1^{-1} w_2.
\] 
\end{definition}

\begin{theorem}\label{thm:E-cross-W}
These definitions make $E\rtimes W$ into a Hilbert $B\rtimes W$-module, which is full  over $B\rtimes W$ if $E$ is full over $B$. We have $\Compact_{B\rtimes W}(E\rtimes W) \cong \Compact_B(E)\rtimes W$, where $W$ acts on $\Compact_B(E)$ as in Exercise \ref{ex:W-acts-on-K}.
\end{theorem}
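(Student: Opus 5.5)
The plan is to realise everything concretely inside an ambient Hilbert module, so that positivity and completeness come for free rather than being checked by hand. Consider the Hilbert $B$-module $\ell^2W\otimes_{\C}E$ (Exercise \ref{ex:V-tensor-E}). I would identify $E\rtimes W$ with a space of adjointable operators $\ell^2W\otimes B\to \ell^2W\otimes E$, using the faithful representation $B\rtimes W\into\Adjointable_B(\ell^2W\otimes B)$ of Exercise \ref{ex:crossed-product-norm}. Specifically, send $\xi w$ to the operator $T_{\xi w}(\delta_v\otimes a)=\delta_{wv}\otimes \gamma_{v^{-1}w^{-1}}(\xi)a$, by analogy with $t_{bw}$. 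One then checks three algebraic identities:
\[
T_{\xi w_1}t_{bw_2}=T_{(\xi w_1)(bw_2)},\qquad T_{\xi_1w_1}^*T_{\xi_2w_2}=t_{\langle \xi_1w_1|\xi_2w_2\rangle}.
\]
These are routine calculations using $\gamma_w(\xi b)=\gamma_w(\xi)\beta_w(b)$ and $\langle\gamma_w\xi|\gamma_w\eta\rangle=\beta_w\langle\xi|\eta\rangle$.

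From these identities the module axioms (1)--(3) follow immediately. Positivity holds because $\langle x|x\rangle$ maps to $T_x^*T_x\ge0$ in the $C^*$-algebra $B\rtimes W$, and definiteness holds because $T_x=0$ forces all $\xi_w=0$. For completeness, I would note that $E\rtimes W$ is $E^{|W|}$ as a vector space. The norm $\|T_x\|$ is equivalent to $\max_w\|\xi_w\|$: one direction bounds each $\|\xi_w\|$ by $\|T_x(\delta_e\otimes\cdot)\|$ restricted to a coordinate, and the other uses the triangle inequality. Since $E$ is complete, so is $E\rtimes W$.

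Fullness goes as follows. The inner products $\langle\xi_1 e|\xi_2 w\rangle=\langle\xi_1|\xi_2\rangle w$ span a dense subset of $\{bw\}$ when $E$ is full. These elements span $B\rtimes W$, and the closed span of the inner products is an ideal by Exercise \ref{ex:full}, so it is everything.

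For the compact operators, I would define a left action of $\Compact_B(E)\rtimes W$ on $E\rtimes W$ by $(kw_1)(\xi w_2)=k\gamma_{w_1}(\xi)w_1w_2$. Together with the left inner product
\[
[\xi_1w_1|\xi_2w_2]=\ket{\xi_1}\bra{\gamma_{w_1w_2^{-1}}\xi_2}\,w_1w_2^{-1},
\]
this should satisfy the compatibility identities of the Remark after Definition \ref{def:Morita-bimodule}, in particular $[x|y]z=x\langle y|z\rangle$, which is a direct check. It follows that $\ket{x}\bra{y}$ equals the action of $[x|y]$, so the resulting homomorphism $\Compact_B(E)\rtimes W\to\Adjointable_{B\rtimes W}(E\rtimes W)$ has image containing $\Compact_{B\rtimes W}(E\rtimes W)$. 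Because the elements $\ket{\xi_1}\bra{\xi_2}w$ span a dense subspace, the image is exactly the compacts.

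The main obstacle is injectivity/isometry of this map, since the norm on $\Compact_B(E)\rtimes W$ is defined abstractly through Exercise \ref{ex:crossed-product-norm}. To handle it, I would show the representation is faithful by relating it to the defining representation on $\ell^2W\otimes\Compact_B(E)$. The interior tensor product with $E$, or equivalently the map $(\ell^2W\otimes E)$ versus $T_x$ above, should intertwine the two representations. Alternatively, I would argue directly that if $\sum k_w w$ acts as zero, then applying it to $\xi e$ for all $\xi$ and separating the $W$-coordinates gives $k_w\gamma_w(\xi)=0$ for all $\xi$, hence $k_w=0$. Since an injective homomorphism of $C^*$-algebras is isometric, this yields the isomorphism.
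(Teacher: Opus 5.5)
Your proposal is correct. The fullness argument and the whole compact-operator part coincide with the paper's proof:
\begin{itemize}
\item your left action is the paper's $\phi(kw):\xi v\mapsto k(\gamma_w\xi)wv$;
\item your identity $[x|y]z=x\langle y|z\rangle$ is a repackaging of the paper's $\phi(\ket{\xi}\bra{\eta}w)=\ket{\xi 1_W}\bra{\gamma_{w^{-1}}\eta\, w^{-1}}$, since $[\xi 1_W|\gamma_{w^{-1}}\eta\, w^{-1}]=\ket{\xi}\bra{\eta}w$;
\item your ``alternative'' injectivity argument is exactly the paper's, so the vaguer interior-tensor-product option is unnecessary.
\end{itemize}

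Where you genuinely differ is in verifying the Hilbert-module axioms.

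The paper uses $E\cong\Compact_B(B,E)$ to embed $E\rtimes W$ as a corner of the $C^*$-algebra $\Compact_B(E\oplus B)\rtimes W$. The inner product is then literally $k^*l$ there, so positivity and completeness are inherited, modulo checking that this corner is closed, which the outline asserts without comment.

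You instead push everything through the regular representation of Exercise~\ref{ex:crossed-product-norm}. You realise $x\in E\rtimes W$ as an adjointable map $T_x:\ell^2W\otimes B\to\ell^2W\otimes E$ with $T_{x}t_{c}=T_{xc}$ and $T_x^*T_y=t_{\langle x|y\rangle}$; both identities hold. You then prove completeness by an explicit comparison of $\|x\|$ with $\max_w\|\xi_w\|$.

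Your route works only with the concrete faithful representation that defines the norm on $B\rtimes W$. It also yields the useful by-product that the Hilbert-module norm on $E\rtimes W$ is equivalent to the obvious norm on $E^{|W|}$.

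The cost is that you use facts about adjointable maps between \emph{different} modules without comment. In particular, you need that $T_x^*T_x$ is positive in $\Adjointable_B(\ell^2W\otimes B)$, and hence in $t(B\rtimes W)$. The notes define positivity as $b^*b$ with $b$ in the same algebra, so this needs an argument. The easiest one is the linking-algebra trick in $\Adjointable_B\big((\ell^2W\otimes E)\oplus(\ell^2W\otimes B)\big)$, which is essentially the device underlying the paper's argument.
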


\begin{proof}[Proof (outline)]
The direct sum $E\oplus B$ is a Hilbert $B$-module, as in Exercise \ref{ex:V-tensor-E}. It is, in a natural way, a $W$-equivariant Hilbert $B$-module: each $w\in W$ acts on $E\oplus B$ by the operator $(\xi,b)\mapsto (\gamma_w(\xi), \beta_w(b))$. This $W$-action on $E\oplus B$ induces a $W$-action on the $C^*$-algebra $\Compact_B(E\oplus B)$, as in Exercise \ref{ex:W-acts-on-K}. Use the identification $E\cong \Compact_B(B,E)$ (Exercise \ref{exercise:E-dual}) to embed $E\rtimes W$ as a closed subspace of $\Compact_B(E\oplus B)\rtimes W$, and then note that the inner product defined in Definition \ref{def:E-cross-W} is the restriction to $E\rtimes W$ of the standard inner product $\langle k\, |\, l\rangle = k^*l$ on $\Compact_B(E\oplus B)\rtimes W$.

The assertion about fullness follows easily from the observation that for all $\xi,\eta\in E$ and $w\in W$ we have $\langle \xi 1_W\, |\, \eta w\rangle = \langle \xi\, |\, \eta\rangle w$.

For the assertion about compact operators: Given $k\in \Compact_B(E)$ and $w\in W$ define an operator $\phi(kw)$ on $E\rtimes W$ by $\phi(kw):\xi v \mapsto k(\gamma_w\xi) wv$ (for $\xi\in E$ and $v\in W$.) Check that $\phi$ is a homomorphism of $C^*$-algebras, from $\Compact_B(E)\rtimes W$ to the $C^*$-algebra of adjointable operators $\Adjointable_{B\rtimes W}(E\rtimes W)$. For $\xi,\eta\in E$ and $w\in W$ we have $\phi( \ket{\xi}\bra{\eta} w) = \ket{\xi 1_W}\bra{\gamma_{w^{-1}}\eta w^{-1}}\in \Compact_{B\rtimes W}(E\rtimes W)$, which shows that $\phi$ maps $\Compact_B(E)\rtimes W$ onto $\Compact_{B\rtimes W}(E\rtimes W)$. The map $\phi$ is also injective, because if $a=\sum_{w\in W} k_w w\in \ker \phi$ then for each $w\in W$ and each $\xi\in E$ we have $0 =\phi(a)(\xi 1_W) = \sum_w k_w (\gamma_w \xi) w$, showing that $k_w\circ \gamma_w=0$ for all $w$; and since the $\gamma_w$s are invertible we conclude that $k_w=0$ for all $w$.
\end{proof}

We will now see that in addition to forming the crossed product $E\rtimes W$, one can turn the equivariant Hilbert module $E$ itself into a Hilbert $B\rtimes W$-module. The two constructions are closely related, as we shall see in the proof of Theorem \ref{thm:Green-Julg}.

\begin{definition}\label{def:Green-Julg}
Let $E$ be a $W$-equivariant Hilbert $B$-module. We make $E$ into a right module over $B\rtimes W$ by defining $\xi bw \coloneqq \gamma_{w^{-1}}(\xi b) = \gamma_{w^{-1}}(\xi) \beta_{w^{-1}}(b)$, and we equip $E$ with the $B\rtimes W$-valued inner product 
\[
\llangle \xi\, |\, \eta\rrangle \coloneqq \sum_{w\in W}\langle\xi\ |\ \gamma_w\eta\rangle w.
\]
\end{definition}

\begin{theorem}\label{thm:Green-Julg}
The above definitions make $E$ into a Hilbert $B\rtimes W$-module, and we have 
\[
\Compact_{B\rtimes W}(E) = \Compact_B(E)^W = \{k\in \Compact_B(E)\ |\ \gamma_w k=k \gamma_w \text{ for all }w\in W\}.
\]
\end{theorem}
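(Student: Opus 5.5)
The plan is to realise $E$, with its $B\rtimes W$-module structure, as a corner of the crossed product $E\rtimes W$ from Theorem \ref{thm:E-cross-W}. The Hilbert-module axioms, and the identification of compact operators, can then be read off from the corresponding facts for $E\rtimes W$.

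Let $p=\frac{1}{|W|}\sum_{w\in W} 1\cdot w$. This is formal notation, since $B$ need not have a unit. It makes sense as the operator $\phi(p)$ on $E\rtimes W$ given by $\xi v\mapsto \frac{1}{|W|}\sum_w \gamma_w(\xi)wv$. This operator is adjointable: it is a self-adjoint projection that commutes with the right $B\rtimes W$-action. Indeed, $p$ lies in the multiplier algebra of $\Compact_B(E)\rtimes W$.

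Consider the map
\[
\Phi:E\to E\rtimes W,\qquad \Phi(\xi)=\tfrac{1}{\sqrt{|W|}}\sum_{w\in W}\gamma_w(\xi)\,w .
\]
First I would check that $\Phi$ is a module map, i.e. $\Phi(\xi\cdot bw)=\Phi(\xi)\,bw$, using the formula $\xi bw=\gamma_{w^{-1}}(\xi b)$. Next I would check that it preserves inner products. This is a direct computation:
\[
\langle\Phi\xi\,|\,\Phi\eta\rangle=\tfrac{1}{|W|}\sum_{u,v}\beta_{u^{-1}}\langle\gamma_u\xi\,|\,\gamma_v\eta\rangle u^{-1}v=\tfrac1{|W|}\sum_{u,v}\langle\xi\,|\,\gamma_{u^{-1}v}\eta\rangle u^{-1}v=\sum_w\langle\xi\,|\,\gamma_w\eta\rangle w=\llangle\xi\,|\,\eta\rrangle .
\]
Consequently $\llangle\ |\ \rrangle$ satisfies axioms (1)--(4) of Definition \ref{def:Hilbert-module}, because they hold in $E\rtimes W$. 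Definiteness holds because $\Phi$ is injective: the $1_W$-coefficient of $\Phi(\xi)$ is $\xi/\sqrt{|W|}$.

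The image of $\Phi$ is the set of $W$-invariant elements $\{\sum \xi_w w:\xi_w=\gamma_w\xi_1\}$, which is a closed subspace (in fact the range of $\phi(p)$). Hence completeness, axiom (5), follows once the two norms on $E$ are comparable. We have $\|\llangle\xi|\xi\rrangle\|\ge$ the norm of the $1_W$-coefficient, which is $\|\langle\xi|\xi\rangle\|$, using that the conditional expectation $B\rtimes W\to B$ is contractive (visible in the representation of Exercise \ref{ex:crossed-product-norm}). We also have the trivial bound $\le|W|\,\|\xi\|^2$. So the norms are equivalent.

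For the compact operators, $\Phi$ identifies $E$ with the complemented submodule $p(E\rtimes W)$. Therefore $\Compact_{B\rtimes W}(E)\cong p\,\Compact_{B\rtimes W}(E\rtimes W)\,p\cong p(\Compact_B(E)\rtimes W)p$ by Theorem \ref{thm:E-cross-W}. A direct computation shows that $p(\sum_w k_w w)p=\bigl(\frac1{|W|^2}\sum_{u,w}\alpha_u(k_w)\bigr)\cdot|W|p$, and that $k\mapsto kp$ (for $k$ $W$-invariant) is an injective homomorphism $\Compact_B(E)^W\to p(\Compact_B(E)\rtimes W)p$. Transporting back along $\Phi$, the element $kp$ acts on $E$ as $k$ itself.

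Alternatively, one can argue directly. On rank-one operators, $\ket{\xi}\llangle\eta|$ acts by $\zeta\mapsto\sum_w\gamma_{w^{-1}}(\xi\langle\eta|\gamma_w\zeta\rangle)=\sum_w\alpha_{w^{-1}}(\ket{\xi}\bra{\eta})\zeta$, which is the $W$-average of $\ket\xi\bra\eta$. Hence $\Compact_{B\rtimes W}(E)\subseteq\Compact_B(E)^W$. Conversely, averaging any $k\in\Compact_B(E)^W$ approximated by finite sums of $\ket\xi\bra\eta$ gives approximants of the form $\frac1{|W|}\sum\ket\xi\llangle\eta|$. These converge in the $\Adjointable_B(E)$ norm. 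The remaining point is that the $\Adjointable_{B\rtimes W}(E)$ norm agrees with the $\Adjointable_B(E)$ norm on $W$-equivariant operators, which follows from the equivalence of norms proved above (or from $C^*$-uniqueness of norms, since both are $C^*$-norms on the same algebra).

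I expect the main obstacle to be the bookkeeping in this last step. One must check that the two operator norms agree, so that closures coincide, and verify the averaging identity carefully with the conventions $\xi bw=\gamma_{w^{-1}}(\xi b)$. I would try the direct rank-one computation first, and fall back on the corner picture $p(\Compact_B(E)\rtimes W)p$ if the norm comparison gets messy.
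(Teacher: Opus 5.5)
Your proposal is correct and takes essentially the paper's route: the same averaging embedding of $E$ into $E\rtimes W$, the same two-sided norm comparison (via the representation of Exercise \ref{ex:crossed-product-norm}) for completeness, and, for the compact operators, the same identity $\ket{\xi}\llangle \eta| = \sum_{w}\gamma_{w^{-1}}\ket{\xi}\bra{\eta}\gamma_w$ followed by averaging (your corner description $p(\Compact_B(E)\rtimes W)p$ is a valid extra but is not needed). One point in your favour: the paper writes the embedding as $|W|^{-1/2}\sum_w\gamma_{w^{-1}}(\xi)w$, but with the conventions of Definitions \ref{def:E-cross-W} and \ref{def:Green-Julg} it is your $\Phi(\xi)=|W|^{-1/2}\sum_w\gamma_w(\xi)w$ that actually satisfies $\Phi(\xi bw)=\Phi(\xi)bw$ and preserves inner products, so keep your version.
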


\begin{proof}[Proof (outline)]
Let $E\rtimes W$ be the Hilbert $B\rtimes W$-module from Theorem \ref{thm:E-cross-W}. The map
\[
\psi : E\to E\rtimes W,\qquad \psi(\xi) = |W|^{-1/2}\sum_{w\in W} \gamma_{w^{-1}}(\xi)w
\]
satisfies $\psi(\xi bw)=\psi(\xi)bw$ and $\llangle \xi\, |\, \eta\rrangle = \langle \psi(\xi)\, |\, \psi(\eta)\rangle$ for all $\xi,\eta\in E$, $b\in B$, and $w\in W$. Thus $E$, equipped with the multiplication and the inner product from Definition \ref{def:Green-Julg}, is unitarily isomorphic to a $B\rtimes W$-submodule of the Hilbert $B$-module $B\rtimes W$.   We now just need to check that $E$ is complete in the norm $\|\ \|_{\llangle\ |\ \rrangle}$ induced by the inner product $\llangle\ |\ \rrangle$. To do this we show that this norm is equivalent to the original norm $\|\ \|$ on $E$. On the one hand we have
\[
\|\xi\|^2_{\llangle\ |\ \rrangle} = \left\| \textstyle\sum_{w\in W} \langle \xi\, |\, \gamma_w\xi\rangle w \right\| \leq |W| \|\xi\|^2
\]
(using  the Cauchy--Schwarz inequality, and the fact that the $\gamma_w$s are isometric.) On the other hand, take an arbitrary $\xi\in E$ with $\|\xi\|=1$. Since the element $\langle\xi\, |\, \xi\rangle\in B$ is positive we can write it as $aa^*$ for some $a\in B$. We then have $\|a\|=\|aa^*\|^{1/2}=\|\xi\|=1$. Using the embedding $t:B\rtimes W\into \Adjointable_B(\ell^2 W\otimes B)$ from Exercise \ref{ex:crossed-product-norm}, the {Cauchy--Schwarz inequality} gives 
\[
\|\xi \|_{\llangle\ |\ \rrangle}^2 \geq \| \langle \delta_{1_W}\otimes a\, |\, t_{\llangle \xi\, |\, \xi \rrangle}(\delta_{1_W}\otimes a) \rangle\|= \| a^*\langle \xi\, |\, \xi\rangle a\|=\|a\|^4=1,
\]
showing that $\|\xi\|_{\llangle\ |\ \rrangle}\geq \|\xi\|$. 

Now for the assertion about compact operators:  for all $\xi,\eta\in E$ the operator $|\xi\rrangle\llangle \eta|\in \Compact_{B\rtimes W}(E)$ is equal to $\sum_{w\in W} \gamma_{w^{-1}} \ket{\xi}\bra{\eta} \gamma_w$, which clearly lies in $\Compact_B(E)^W$. Taking closed spans shows that $\Compact_{B\rtimes W}(E)\subseteq \Compact_B(E)^W$. On the other hand, the same computation shows that for every $k\in \Compact_B(E)$ we have $\sum_{w\in W} \gamma_{w^{-1}} k \gamma_w \in \Compact_{B\rtimes W}(E)$, and for every $k\in \Compact_B(E)^W$ we have $\sum_{w\in W} \gamma_{w^{-1}} k \gamma_w = |W|k$. So $\Compact_B(E)^W\subseteq \Compact_{B\rtimes W}(E)$.
\end{proof}

\begin{remark}
Theorem \ref{thm:Green-Julg} is closely related to the \emph{Green--Julg theorem} \cite{Julg}, which says that the construction in Definition \ref{def:Green-Julg} (and its generalisation to the case where $W$ is a compact group) gives an isomorphism from the $W$-equivariant $K$-theory of $B$, to the $K$-theory of $B\rtimes W$.
\end{remark}

\subsection{A Morita equivalence for $C_0(X,\Compact(H))^W$}\label{sec:CXKW-Morita}

Consider again the situation of Section \ref{subsec:CXKW}: $X$ is a locally compact Hausdorff space, $W$ is a finite group acting on $X$ by homeomorphisms, $H$ is a Hilbert space, and we are given a strong-operator-continuous family of unitary operators $I = \{ I_{w,x}\in \Unitary(H)\ |\ w\in W,\ x\in X\}$ satisfying $I_{w_1,w_2x}I_{w_2,x}=I_{w_1w_2,x}$. As explained in Example \ref{example:CXH-equivariant}, we can use these operators to view $C_0(X,H)$ as a $W$-equivariant Hilbert $C_0(X)$-module. Theorem \ref{thm:Green-Julg} shows how to turn this equivariant Hilbert module into a Hilbert module over $C_0(X)\rtimes W$, in such a way that 
\[
\Compact_{C_0(X)\rtimes W}(C_0(X,H)) = \Compact_{C_0(X)}(C_0(X,H))^W \cong C_0(X,\Compact(H))^W
\]
(\emph{cf}.~Exercise \ref{ex:K-C_0-H}), where $W$ acts on $C_0(X,\Compact(H))$ as in \eqref{eq:alpha-def}.

Let $J\subseteq C_0(X)\rtimes W$ denote the closed ideal $\overline{\lspan}\{ \llangle \xi\, |\, \eta\rrangle\ |\ \xi,\eta\in 
C_0(X,H)\}$. The module $C_0(X,H)$ then implements a Morita equivalence $C_0(X,\Compact(H))^W \sim J$. Our goal is to describe this ideal $J$, under some additional hypotheses that will be satisfied in the situations arising in the study of $C^*_r(G)$ for a real reductive group $G$. 

To motivate the hypotheses, first suppose that all of the operators $I_{w,x}\in \Unitary(H)$ are scalar multiples of the identity; this will almost never happen in the setting of reductive groups, but the experiment is enlightening nonetheless. The action $\alpha$ of $W$ on $C_0(X,\Compact(H))$ simplifies in this case to $\alpha_w(k)(x) = k(w^{-1}x)$, and we have $C_0(X,\Compact(H))^W\cong C_0(X/W,\Compact(H))$, which we know to be Morita equivalent to the commutative $C^*$-algebra $C_0(X/W)$. 

This result cannot hold in general, because Morita equivalence preserves spectra, and $\widehat{C_0(X/W)}\cong X/W$ is Hausdorff, while we know (\emph{e.g.,}, from Exercise \ref{ex:CXKW-example}) that the spectrum of $C_0(X,\Compact(H))^W$ can be non-Hausdorff. But our thought experiment suggests, at least, that it would be helpful to pay attention to the question of which $I_{w,x}$s are scalars.

\begin{definition}
Given $X$, $W$, $H$, and $I$ as above, for each $x\in X$ we define
\[
W'_x = \{w\in W_x\ |\ \text{the operator $I_{w,x}$ is a scalar multiple of the identity $\id_H$}\}.
\]
\end{definition}

It is easily checked that for each $w\in W$ and each $x\in X$ we have $wW'_x w^{-1}=W'_{wx}$. In particular, $W'_x$ is a normal subgroup of $W_x$.

We will work under the following additional assumptions:

\begin{definition}\label{def:I-conditions}
\begin{enumerate}[\rm(1)]
\item We say that the family of operators $I$ satisfies the \emph{normalisation condition} if for each $x\in X$ and each $w\in W'_x$ we have $I_{w,x}=\id_H$.
\item If $I$ satisfies the normalisation condition, then we say that it also satisfies the \emph{completeness condition} if for each $x\in X$ the representation $W_x\to \Unitary(H)$ defined by $w\mapsto I_{w,x}$ contains a copy of each irreducible unitary representation of $W_x$ that is trivial on $W'_x$.
\end{enumerate}
\end{definition}

\begin{definition}\label{def:CXWI}
Given $X$, $W$, $H$, and $I$ as above, we consider the following subset of $C_0(X)\rtimes W$:
\[
C(X,W,I) = \left\{ \sum_{w\in W} f_w w\in C_0(X)\rtimes W\ \middle|\  \begin{aligned} &\forall x\in X,\ \forall w'\in W_x',\ \forall w\in W:\\ & \quad \quad \quad f_{w'w}(x)  = f_w(x)\end{aligned} \right\}.
\]
\end{definition}
Note that $C(X,W,I)$ depends on $H$ and $I$ only through the system of subgroups $W'_x\subseteq W_x$ that they define.

\begin{exercise}\label{ex:C-ideal}
Prove that $C(X,W,I)$ is an ideal in $C_0(X)\rtimes W$. (\emph{Hint:} first consider the case where $W$ acts transitively on $X$; then use the fact that pre-images of ideals under homomorphisms, and intersections of ideals, are ideals.)
\end{exercise}

\begin{theorem}\label{thm:CXKW-Morita}
If $I$ satisfies the completeness and the normalisation conditions, then the $C^*$-algebra $C_0(X,\Compact(H))^W$ is Morita equivalent to $C(X,W,I)$.
\end{theorem}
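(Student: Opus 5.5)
We already know from the discussion preceding the theorem that $C_0(X,H)$, made into a Hilbert $C_0(X)\rtimes W$-module via Definition \ref{def:Green-Julg} and Theorem \ref{thm:Green-Julg}, implements a Morita equivalence between $C_0(X,\Compact(H))^W$ and the ideal
\[
J=\overline{\lspan}\{\llangle \xi\,|\,\eta\rrangle\ |\ \xi,\eta\in C_0(X,H)\}\subseteq C_0(X)\rtimes W .
\]
So the whole task is to prove the equality of ideals $J=C(X,W,I)$. The plan is to prove the two inclusions separately. For $J\subseteq C(X,W,I)$ we compute directly. For the reverse inclusion we use the spectrum of $C_0(X)\rtimes W$ together with Theorem \ref{thm:ideal-irreps}(8).

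\textbf{Step 1: $J\subseteq C(X,W,I)$.} By Definition \ref{def:Green-Julg} and Example \ref{example:CXH-equivariant},
\[
\llangle\xi\,|\,\eta\rrangle=\sum_w f_w w,\qquad f_w(x)=\langle \xi(x)\,|\,I_{w,w^{-1}x}\eta(w^{-1}x)\rangle .
\]
We must check that $f_{w'w}(x)=f_w(x)$ whenever $w'\in W'_x$. Since $w'^{-1}x=x$, the cocycle identity gives $I_{w'w,w^{-1}x}=I_{w',x}I_{w,w^{-1}x}$, and the normalisation condition gives $I_{w',x}=\id_H$. Hence $f_{w'w}(x)=f_w(x)$. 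Because $C(X,W,I)$ is a closed subspace (indeed an ideal, by Exercise \ref{ex:C-ideal}), taking closed spans gives the inclusion.

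\textbf{Step 2: $C(X,W,I)\subseteq J$.} Both sides are ideals of $A=C_0(X)\rtimes W$, so by Theorem \ref{thm:ideal-irreps} it suffices to show the following: every irreducible representation of $A$ that annihilates $J$ also annihilates $C(X,W,I)$. We first describe $\widehat{A}$. Use Exercise \ref{ex:fixed-point-crossed-product} to identify $A\cong C_0(X,\Compact(\ell^2W))^W$, with the left-regular action $I_{w,x}=\lambda_w$. Theorem \ref{thm:fixed-irreps} then shows that the irreducible representations of $A$ are parametrised by pairs $(Wx,\rho)$ with $\rho\in\widehat{W_x}$, since every $\rho$ occurs in $\lambda|_{W_x}$. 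Concretely, we expect $\pi_{x,\rho}$ to be induced from the representation $\ev_x\otimes\rho$ of $C_0(X)\rtimes W_x$: on $\sum f_w w$ it sends $f_w$ to evaluation along the orbit and $w$ to $\rho$ on the stabiliser part. We will then make the following two computations.

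(a) $\pi_{x,\rho}$ annihilates $C(X,W,I)$ exactly when $\rho$ is nontrivial on $W'_x$. To see this, localise at $x$: the image of $C(X,W,I)$ in $C(Wx)\rtimes W$ consists of the elements whose coefficients are invariant under left multiplication by $W'_{x}$ at $x$. Applied to $\rho$, such elements give operators of the form $\rho(e_{W'_x})\cdot(\cdots)$, where $e_{W'_x}$ is the averaging projection. This vanishes if and only if $\rho|_{W'_x}$ contains no trivial vector. Since $W'_x$ is normal in $W_x$, Clifford theory says this happens if and only if $\rho|_{W'_x}$ is nontrivial.

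(b) $\pi_{x,\rho}$ annihilates $J$ if and only if $\rho$ does not occur in $I_x$. This is the Morita correspondence. The irreducible representations of $J$ correspond to those of $C_0(X,\Compact(H))^W$, which by Theorem \ref{thm:fixed-irreps} are labelled by pairs $(Wx,\rho)$ with $\rho\subseteq I_x$. To match the labels, compute the induced representation $C_0(X,H)\otimes_{A}H_{\pi_{x,\rho}}$: localising at $x$, it looks like $\HS(\rho,I_x)^{W_x}$ up to a dual or conjugate.

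Now combine (a) and (b). Suppose $\pi_{x,\rho}$ annihilates $J$, so $\rho\not\subseteq I_x$. The completeness condition then forces $\rho$ to be nontrivial on $W'_x$, and by (a) the representation $\pi_{x,\rho}$ annihilates $C(X,W,I)$, as required.

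The main obstacle is (b), and in particular getting the labels right: the Morita bijection must send $\pi_{x,\rho}$ of $A$ to the representation of $C_0(X,\Compact(H))^W$ labelled by the same $\rho$ (or its dual), rather than by some twist. To avoid the Morita machinery, I would first try a direct check. Suppose $\rho\subseteq I_x$. Choose $\xi,\eta$ supported near $x$, with $\xi(x)$ and $\eta(x)$ in the $\rho$-isotypic part of $I_x$. Then compute $\pi_{x,\rho}(\llangle\xi\,|\,\eta\rrangle)$ explicitly, and check that it equals a nonzero Schur-orthogonality expression.
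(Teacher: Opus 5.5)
Your proposal is correct and is essentially the paper's proof. You get $J\subseteq C(X,W,I)$ from normalisation and the cocycle identity. You then get the reverse inclusion by checking the irreducibles $\pi_{x,\rho}\circ\phi$ of $C_0(X)\rtimes W$: those with $\rho$ nontrivial on $W'_x$ kill $C(X,W,I)$ (by averaging over $W'_x$), and those with $\rho$ trivial on $W'_x$ are nonzero on $J$, by completeness plus an explicit Schur-orthogonality computation with $\xi$ supported near $x$ and vanishing on the rest of $Wx$. Your fallback \emph{direct check} is exactly what the paper does, so the Morita label-matching route is not needed.

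The $\rho$ versus $\bar\rho$ issue you flag is real but harmless. In the paper's realisation of $\pi_{x,\rho}$ on $\HS(H_\rho,\ell^2 W)^{W_x}$ one takes $\xi(x)$ in the $\rho$-isotypic part, and the paper obtains $\sum_{w\in W_x}|\langle\zeta\,|\,\rho(w)\zeta\rangle|^2>0$. In your picture induced from $\ev_x\otimes\rho$ one would need the $\bar\rho$-isotypic part instead. Under normalisation plus completeness, the set of $\rho$ occurring in $I_x$ is closed under $\rho\mapsto\bar\rho$, so either labelling gives the result.
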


\begin{proof}[Proof (outline)]
We noted above that Theorem \ref{thm:Green-Julg}, applied to the $W$-equivariant Hilbert $C_0(X)$-module $C_0(X,H)$, gives a Morita equivalence between $C_0(X,\Compact(H))^W$ and the ideal
\[
J = \overline{\lspan}\{ \llangle \xi\, |\, \eta\rrangle\in C_0(X)\rtimes W\ |\ \xi,\eta\in C_0(X,H)\}
\]
in $C_0(X)\rtimes W$. Our goal is to prove that $J=C(X,W,I)$. 

To save space we write $C=C(X,W,I)$. It is easy to check that $\llangle \xi \,|\,\eta\rrangle \in C$ for all $\xi,\eta\in C_0(X,H)$, and so $J\subseteq C$. Since both $J$ and $C$ are ideals in $C_0(X)\rtimes W$, to prove that they are equal it will suffice to prove that every irreducible representation of $C_0(X)\rtimes W$ that annihilates $J$ also annihilates $C$ (Theorem \ref{thm:ideal-irreps}(7).) 

Up to isomorphism, each irreducible representation of $C_0(X)\rtimes W$ is the composition of the isomorphism $\phi:C_0(X)\rtimes W\to C_0(X,\Compact(\ell^2 W))^W$ from Exercise \ref{ex:fixed-point-crossed-product}, with one of the irreducible representations $\pi_{x,\rho}:C_0(X,\Compact(\ell^2 W))^W\to \Bounded(\HS(H_\rho,\ell^2 W)^{W_x})$ from Theorem \ref{thm:fixed-irreps}. Given a point $x\in X$ and an irreducible representation $\rho\in \widehat{W_x}$ on which $W'_x$ acts trivially, one can use the completeness condition to cook up a function $\xi\in C_0(X,H)$ with $\pi_{x,\rho}(\phi(\llangle \xi\, |\, \xi\rrangle))\neq 0$. Here is one way to do this. The completeness condition implies that we can find a unitary operator $u:H_\rho\to H$ satisfying $I_{w,x}\circ u = u\circ \rho(w)$ for all $w\in W_x$. Given a nonzero $s\in \HS(H_\rho,\ell^2 W)^{W_x}$, let $\zeta\coloneqq s^*\delta_1\in H_\rho$. We have $\zeta\neq 0$, because there is at least one $w\in W_x$ and at least one $\eta\in H_\rho$ with $\langle \delta_w\, |\, s\eta\rangle\neq 0$, and since $s$ is $W_x$-equivariant we have $\langle \delta_w\, |\, s\eta\rangle = \langle s^*\delta_1\, |\ \rho(w)\eta\rangle$. Now let $\xi\in C_0(X,H)$ be a function with $\xi(x)=\zeta$, and with $\xi(y)=0$ for each $y\neq x$ in the orbit $Wx$. Unravelling the definitions leads to the equality
\[
\Big\langle \delta_1\, \Big|\, (\pi_{x,\rho}\circ \phi)( \llangle \xi\, |\, \xi\rrangle) (s)(\zeta)\Big\rangle = \sum_{w\in W_x} |\langle\zeta\, |\, \rho(w)\zeta\rangle|^2 >0,
\]
and so the irreducible representation $\pi_{x,\rho}\circ\phi$ of $C_0(X)\rtimes W$ is nonzero on the ideal $J$. In other words,  if an irreducible representation $\pi_{x,\rho}\circ \phi$ of $C_0(X)\rtimes W$ annihilates $J$, then $\rho$ is nontrivial on $W'_x$. 

On the other hand, a straightforward computation shows that for each $c\in C$, $x\in X$, and $w'\in W'_x$, we have $\phi(c)(x)=\lambda_{w'}\phi(c)(x)$ in $\Compact(\ell^2 W)$, and this implies that if $\rho\in \widehat{W_x}$ is nontrivial on $W'_x$ then $\pi_{x,\rho}\circ\phi$ annihilates $C$. Thus every irreducible representation of $C_0(X)\rtimes W$ that annihilates $J$ also annihilates $C$, and so $J=C$.
\end{proof}

\section{The reduced $C^*$-algebra of a real reductive group II}

Let $G$ be a real reductive group. Theorem \ref{thm:Plancherel} (together with Exercise \ref{ex:Morita-sum}) shows that in order to understand $C^*_r(G)$ up to Morita equivalence, it will suffice to consider one of the direct-summands $C_0(\germ{a}_L^*,\Compact(\Ind_P^G H_\sigma))^{W_\sigma}$. So we now fix a Levi subgroup $L$ of a parabolic subgroup $P$ of $G$, and a discrete-series representation $\sigma$ of the compactly generated part of $L$. For each $\chi\in \germ{a}_L^*$ we consider the normal subgroup  \[
W'_{\sigma,\chi}\coloneqq \{w\in W_{\sigma,\chi}\ |\ \text{ the operator $I_{w,\chi}\in \Unitary(\Ind_P^G H_\sigma)$ is a scalar}\}
\]
of the stabiliser of $\chi$ in $W_\sigma$. We let $W_\sigma'\coloneqq W_{\sigma,0}'$.

Our computation relies on the following theorem of Knapp and Stein. 

\begin{theorem}[{\cite[Theorem 13.4 \& Lemma 14.1]{Knapp-Stein-II}}]\label{thm:KS}\hfill
\begin{enumerate}[\rm(1)]
\item The intertwining operators $I_{w,\chi}$ can be normalised so that $I_{w,\chi}=\id_{\Ind_P^G H_\sigma}$ for all $\chi\in \germ{a}_L^*$ and all $w\in W_{\sigma,\chi}'$.
\item Assuming that the $I_{w,\chi}$s are normalised as in part (1), for each $\chi\in \germ{a}_L^*$ the representation $W_{\sigma,\chi}\xrightarrow{w\mapsto I_{w,\chi}} \Unitary(\Ind_P^G H_\sigma)$ contains a copy of each irreducible unitary representation of $W_{\sigma,\chi}$ that is trivial on $W_{\sigma,\chi}'$.
\item For each $\chi\in \germ{a}_L^*$ we have $W_{\sigma,\chi}' = W_{\sigma,\chi}\cap W_{\sigma}'$.
\item There is a subgroup $R_\sigma\subseteq W_\sigma$ such that $W_\sigma = W'_\sigma \rtimes R_\sigma$.\hfill\qed
\end{enumerate}
\end{theorem}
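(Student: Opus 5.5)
This theorem is quoted from Knapp and Stein, and these notes take it as given. If I had to prove it, I would follow the Knapp--Stein theory of the $R$-group. The plan has four steps: build the root system $\Delta'_\sigma$ and define $W'_\sigma$ through it, get (4) from a Weyl-chamber argument, get (1) from how $I_{w,\chi}$ factors along reduced words, and finally get (3) and (2) from Harish-Chandra's description of the commuting algebra plus a dimension count.

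\textbf{Step 1: the root system $\Delta'_\sigma$.} Start with the unnormalised intertwining operators $A(w,\sigma\otimes\chi)$, defined by convergent integrals over $N\cap wNw^{-1}$ on a cone in the complexified $\germ{a}_L^*$ and continued meromorphically.
\begin{enumerate}[\rm(i)]
\item For each reduced root $\alpha$ of $(G,A_L)$, Harish-Chandra's rank-one theory gives the scalar identity $A(s_\alpha^{-1})A(s_\alpha)=\mu_\alpha(\sigma\otimes\chi)^{-1}$, where $\mu_\alpha$ is the rank-one Plancherel factor.
\item Let $\Delta'_\sigma$ consist of those $\alpha$ with $s_\alpha\in W_\sigma$ and $\mu_\alpha(\sigma)=0$.
\item Check that $\Delta'_\sigma$ is a root system. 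The point is that $\mu_\alpha(\sigma)$ is unchanged when $\sigma$ and $\alpha$ are conjugated by $W_\sigma$.
\item Define $W'_\sigma$ to be the group generated by the reflections $s_\alpha$ with $\alpha\in\Delta'_\sigma$.
\end{enumerate}

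\textbf{Step 2: part (4).} Let $R_\sigma$ be the stabiliser in $W_\sigma$ of the positive system $\Delta'^+_\sigma$. The standard simple-transitivity of a Weyl group on its chambers then gives $W_\sigma=W'_\sigma\rtimes R_\sigma$.

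\textbf{Step 3: normalisation and part (1).}
\begin{enumerate}[\rm(i)]
\item Normalise by scalar meromorphic factors, setting $I_{w,\chi}=\gamma_w(\chi)^{-1}\sigma(w)A(w,\sigma\otimes\chi)$ with $\gamma_w$ built from Harish-Chandra's $c$-functions. This makes each $I_{w,\chi}$ unitary and holomorphic on $i\germ{a}_L^*$ and gives the cocycle relation; Knapp's commutativity result supplies that relation in the paper's form.
\item Factor a general $I_{w,\chi}$ along a reduced word for $w$.
\item For $\alpha\in\Delta'_\sigma$ with $s_\alpha$ fixing $\chi$, the rank-one computation shows that $I_{s_\alpha,\chi}$ is a scalar. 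After renormalising, this scalar can be taken to be $1$.
\item For $w\in W'_{\sigma,\chi}$, the same rank-one analysis should show that $w$ is a product of such reflections, so $I_{w,\chi}=\id$. This is (1). It also gives the inclusion $W_{\sigma,\chi}\cap W'_\sigma\subseteq W'_{\sigma,\chi}$.
\end{enumerate}

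\textbf{Step 4: parts (3) and (2).}
\begin{enumerate}[\rm(i)]
\item Invoke Harish-Chandra's commuting-algebra theorem, already used in the proof of Theorem~\ref{thm:Plancherel}: the operators $I_{w,\chi}$ with $w\in W_{\sigma,\chi}$ span $\mathrm{End}_G(\Ind_P^G(\sigma\otimes\chi))$.
\item Show that for $w$ running over $R_{\sigma,\chi}\cong W_{\sigma,\chi}/W'_{\sigma,\chi}$ these operators are linearly independent. One compares their leading asymptotics along $A_L$, or uses the Plancherel formula to count the dimension.
\item Linear independence gives the reverse inclusion in (3): an element outside $W'_\sigma$ cannot act as a scalar. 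This completes (3).
\item Independence also shows that $w\mapsto I_{w,\chi}$ induces an isomorphism $\C[R_{\sigma,\chi}]\cong\mathrm{End}_G(\Ind_P^G(\sigma\otimes\chi))$. Because Knapp's normalisation gives a genuine homomorphism rather than a projective one, this is an honest group algebra, not a twisted one.
\item Take the decomposition $\Ind_P^G(\sigma\otimes\chi)=\bigoplus_\pi m_\pi\,\pi$. Its commutant is $\bigoplus_\pi\Mat_{m_\pi}$, so it is identified with $\C[R_{\sigma,\chi}]=\bigoplus_{\tau\in\widehat{R_{\sigma,\chi}}}\Mat_{\dim\tau}$.
\item The $R_{\sigma,\chi}$-representation $w\mapsto I_{w,\chi}$ is then $\bigoplus_\tau\tau\otimes(\text{a nonzero multiplicity space})$. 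So every irreducible representation of $W_{\sigma,\chi}/W'_{\sigma,\chi}$ occurs, which is (2).
\end{enumerate}

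\textbf{Main obstacle.} I expect the hardest step to be the linear independence of the $I_{w,\chi}$ for $w\in R_{\sigma,\chi}$, together with the matching characterisation of the scalar operators in (3). Both rest on the delicate analytic behaviour of the Plancherel factors $\mu_\alpha$ at $\chi$, and on the lemma that the reflections $s_\alpha$ with $\alpha\in\Delta'_\sigma$ and $s_\alpha\chi=\chi$ generate $W_{\sigma,\chi}\cap W'_\sigma$. My first attempt would be to reduce to the stabiliser of $\chi$, that is, to a smaller Levi containing $L$, and argue there by induction on rank.
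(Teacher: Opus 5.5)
The paper proves nothing here beyond the citation. Your outline is the Knapp--Stein $R$-group argument that the citation stands for: build $\Delta'_\sigma$ from the zeros of the rank-one Plancherel factors, take $R_\sigma$ to be the stabiliser of a positive system, normalise, and finish with linear independence over the $R$-group. One step, however, is misattributed, and the rest of your logic leans on it.

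In Step 3(iv) you say that ``the same rank-one analysis'' shows every $w\in W_{\sigma,\chi}$ with $I_{w,\chi}$ scalar is a product of reflections $s_\alpha$ with $\alpha\in\Delta'_\sigma$. Rank-one reduction only ever sees individual reflections. The claim that nothing in $W_{\sigma,\chi}$ outside $W'_\sigma$ acts by a scalar is a statement about the whole quotient $W_{\sigma,\chi}/(W_{\sigma,\chi}\cap W'_\sigma)$. Its only source in your outline is the linear independence of Step 4(ii). What Step 3(iii), together with Steinberg's theorem below, really gives is that your reflection group intersected with $W_{\sigma,\chi}$ acts trivially.

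This matters because the paper \emph{defines} $W'_{\sigma,\chi}$ as the scalar-acting subgroup and sets $W'_\sigma\coloneqq W'_{\sigma,0}$. So the following all become available only after linear independence:
\begin{enumerate}[$\bullet$]
\item part (1) for the paper's $W'_{\sigma,\chi}$;
\item the inclusion ``$\subseteq$'' in (3);
\item the identification of the paper's $W'_{\sigma,0}$ with your reflection group, without which Step 2 does not prove (4) as stated;
\item part (2), which needs (1).
\end{enumerate}
The proposal is therefore a reduction of the theorem to Knapp--Stein's linear-independence theorem. ``Compare leading asymptotics, or count dimensions with the Plancherel formula'' is a pointer, not an argument, and that step carries all the analytic content beyond rank one.

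Some smaller corrections follow.
\begin{enumerate}[$\bullet$]
\item The lemma you list among the main obstacles needs no analysis and no induction on Levi subgroups. That lemma says the $s_\alpha$ with $\alpha\in\Delta'_\sigma$ and $\langle\alpha,\chi\rangle=0$ generate $W_{\sigma,\chi}\cap W'_\sigma$. It is Steinberg's theorem: the isotropy group of a vector under a finite reflection group is generated by the reflections it contains.
\item ``After renormalising, this scalar can be taken to be $1$'' must be done compatibly with the cocycle relation. That relation forces $I_{s_\alpha,\chi}=\epsilon_\alpha\,\id$ with $\epsilon_\alpha=\pm1$. The sign is constant on $\alpha^\perp$ by continuity, and it satisfies $\epsilon_{w\alpha}=\epsilon_\alpha$ for $w\in W_\sigma$. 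Hence $s_\alpha\mapsto\epsilon_\alpha$ respects the Coxeter relations of $W'_\sigma$ (reflections joined by an odd bond are conjugate) and conjugation by $R_\sigma$. It therefore defines a character $\epsilon$ of $W_\sigma=W'_\sigma\rtimes R_\sigma$, and $\epsilon(w)I_{w,\chi}$ is the required normalisation.
\item Once linear independence is known, (2) does not need Harish-Chandra's spanning theorem. Injectivity of the algebra map $\C[W_{\sigma,\chi}/W'_{\sigma,\chi}]\to\Bounded(\Ind_P^G H_\sigma)$ already forces every irreducible representation of the quotient to occur.
\end{enumerate}
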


We are now ready to complete our computation of $C^*_r(G)$, up to Morita equivalence:

\begin{theorem}\label{thm:G-Morita}
For each direct-summand in Theorem \ref{thm:Plancherel} there is a Morita equivalence
\[
C_0(\germ{a}_L^*, \Compact(\Ind_P^G H_\sigma))^{W_\sigma} \sim C_0(\germ{a}_L^*/W_\sigma')\rtimes R_\sigma.
\]
\end{theorem}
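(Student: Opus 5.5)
By Theorem \ref{thm:KS}(1)--(2), the family $I=\{I_{w,\chi}\}$ can be normalised so that it satisfies the normalisation and completeness conditions of Definition \ref{def:I-conditions}, with $X=\germ{a}_L^*$, $W=W_\sigma$, $H=\Ind_P^G H_\sigma$ and $W'_\chi=W'_{\sigma,\chi}$. Replacing the $I_{w,\chi}$ by these normalised operators does not change the fixed-point algebra, since it only multiplies each $I_{w,\chi}$ by a scalar and so leaves the action \eqref{eq:alpha-def} unchanged. Theorem \ref{thm:CXKW-Morita} then gives
\[
C_0(\germ{a}_L^*,\Compact(\Ind_P^G H_\sigma))^{W_\sigma}\sim C(\germ{a}_L^*,W_\sigma,I).
\]
It therefore suffices to prove that $C(\germ{a}_L^*,W_\sigma,I)\cong C_0(\germ{a}_L^*/W'_\sigma)\rtimes R_\sigma$. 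Morita equivalence is transitive, and isomorphism is a special case of it.

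To identify the ideal, write $W'=W'_\sigma$ and $R=R_\sigma$. By Theorem \ref{thm:KS}(3), $W'_\chi=W_\chi\cap W'$. Theorem \ref{thm:KS}(4) gives $W_\sigma=W'\rtimes R$, and Exercise \ref{ex:iterated-crossed-product} then gives $C_0(X)\rtimes W_\sigma\cong (C_0(X)\rtimes W')\rtimes R$. Under this identification I will show that $C(X,W_\sigma,I)$ corresponds to $(C(X,W',I')\rtimes R)$, where $C(X,W',I')$ is the analogous ideal for the $W'$-system. Because $W'_\chi=W_\chi\cap W'$, the defining condition $f_{w'w}(\chi)=f_w(\chi)$ for $w'\in W'_\chi$ involves only translations inside the $W'$-cosets. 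Writing $w=w_1 r$ with $w_1\in W'$ and $r\in R$, the condition therefore decouples into a condition on each $R$-component separately. One also checks that the $R$-action preserves $C(X,W',I')$; this uses $rW'_\chi r^{-1}=W'_{r\chi}$.

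Next I show that $C(X,W',I')\cong C_0(X/W')$. Inside $C_0(X)\rtimes W'$ the condition says exactly that $f_{w'w}(\chi)=f_w(\chi)$ for all $w'\in W'_\chi$, where $W'_\chi$ is now the full stabiliser of $\chi$ in $W'$. Under the isomorphism $\phi$ of Exercise \ref{ex:fixed-point-crossed-product}, I expect $C(X,W',I')$ to be precisely the algebra $C_0(X,\Compact(\ell^2W'))^{W'}$ compressed to the trivial-representation parts. A cleaner route is to observe that $C(X,W',I')$ is the ideal $J$ generated by the Green--Julg inner products for the trivial system $H=\C$, $I\equiv 1$, whose $W'_\chi$ is all of $W'_\chi$. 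For that trivial system, Theorem \ref{thm:CXKW-Morita} (via Theorem \ref{thm:Green-Julg}) exhibits $C_0(X)$ as a full Hilbert $C(X,W',1)$-module with compact operators $C_0(X)^{W'}\cong C_0(X/W')$ (Exercise \ref{ex:W-acting-on-X}).

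This yields only a Morita equivalence $C(X,W',1)\sim C_0(X/W')$, not an isomorphism. I will upgrade it to an $R$-equivariant Morita equivalence: $R$ acts on the module $C_0(X)$ by translation, compatibly with both algebras. Then the crossed-product construction of Theorem \ref{thm:E-cross-W}, applied to this $R$-equivariant bimodule, gives
\[
C(X,W',1)\rtimes R\sim C_0(X/W')\rtimes R.
\]
This avoids needing an honest isomorphism.

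The main obstacle is the decoupling step: verifying carefully that, under the iterated crossed-product isomorphism, $C(X,W_\sigma,I)$ is exactly $C(X,W',1)\rtimes R$. This uses Theorem \ref{thm:KS}(3) essentially, and requires tracking how the coset condition transforms under $\alpha_r$. A secondary obstacle is checking that Theorem \ref{thm:E-cross-W} works with the given bimodule. That theorem produces $\Compact(E)\rtimes R\cong\Compact(E\rtimes R)$ together with fullness, which is exactly what is needed, but one must confirm that the $R$-action on $C(X,W',1)$ coming from the bimodule matches the one coming from Exercise \ref{ex:iterated-crossed-product}.
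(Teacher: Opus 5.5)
Your proposal is correct and follows the paper's proof essentially step for step: Theorem \ref{thm:CXKW-Morita} gives $C_0(\germ{a}_L^*,\Compact(\Ind_P^G H_\sigma))^{W_\sigma}\sim C(\germ{a}_L^*,W_\sigma,I)$; Exercise \ref{ex:iterated-crossed-product} together with Theorem \ref{thm:KS}(3)--(4) identifies this ideal with $C(\germ{a}_L^*,W'_\sigma,\id)\rtimes R_\sigma$; and Theorem \ref{thm:CXKW-Morita} for the trivial system on $\C$, made $R_\sigma$-equivariant via Theorem \ref{thm:E-cross-W}, gives the equivalence with $C_0(\germ{a}_L^*/W'_\sigma)\rtimes R_\sigma$. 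The only blemish is your early reduction to an \emph{isomorphism} $C(\germ{a}_L^*,W_\sigma,I)\cong C_0(\germ{a}_L^*/W'_\sigma)\rtimes R_\sigma$, which fails in general (the left side need not be commutative even when $R_\sigma$ is trivial), but you rightly abandon it later in favour of a Morita equivalence plus transitivity.
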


\begin{proof}[Proof (outline)]
Parts (1) and (2) of Theorem \ref{thm:KS} say that the family of intertwining operators $I_{w,\chi}$ satisfies the normalisation and completeness conditions of Definition \ref{def:I-conditions}, and so Theorem \ref{thm:CXKW-Morita} implies that the Hilbert $C_0(\germ{a}_L^*)\rtimes W_\sigma$-module $C_0(\germ{a}_L^*,\Ind_P^G H_\sigma)$ furnishes a Morita equivalence between the $C^*$-algebra $C_0(\germ{a}_L^*,\Compact(\Ind_P^G H_\sigma))^{W_\sigma}$ and the ideal
\[
C(\germ{a}_L^*,W_\sigma,I) = \left\{ \sum_{w\in W_\sigma} f_w w\in C_0(\germ{a}_L^*)\rtimes W_\sigma\ \middle|\  \begin{aligned} &\forall \chi\in \germ{a}_L^*,\, \forall w'\in W_{\sigma,\chi}',\, \forall w\in W_\sigma:\\ & \quad \quad \quad f_{w'w}(\chi)  = f_w(\chi)\end{aligned} \right\}
\]
in $C_0(\germ{a}_L^*)\rtimes W_\sigma$.

The decomposition $W_\sigma = W'_\sigma \rtimes R_\sigma$ gives an isomorphism $C_0(\germ{a}_L^*)\rtimes W_\sigma\cong(C_0(\germ{a}_L^*)\rtimes W'_\sigma)\rtimes R_\sigma$ (Exercise \ref{ex:iterated-crossed-product}). The fact that $W'_{\sigma,\chi}=W_{\sigma,\chi}\cap W_\sigma'$ for every $\chi$ implies that this isomorphism restricts to an isomorphism between the ideal $C(\germ{a}_L^*,W_\sigma,I)\subseteq C_0(\germ{a}_L^*)\rtimes W_\sigma$ and the ideal $C(\germ{a}_L^*,W_\sigma',I)\rtimes R_\sigma \subseteq (C_0(\germ{a}_L^*)\rtimes W_\sigma')\rtimes R_\sigma$. 

The $C^*$-algebra $C(\germ{a}_L^*,W_\sigma',I)$ depends only on which of the operators $I_{w,\chi}$ are scalars, for $w\in W_\sigma'\cap W_{\sigma,\chi}=W'_{\sigma,\chi}$. The answer in this case is, all of them are scalars (because that is the definition of $W'_{\sigma,\chi}$.) This is the same answer as for the operators $\id_{w,\chi}\coloneqq \id$ on the Hilbert space $\C$, so $C(\germ{a}_L^*,W_\sigma',I)=C(\germ{a}_L^*,W_\sigma',\id)$. Theorem \ref{thm:CXKW-Morita} implies that the  bimodule $C_0(\germ{a}_L^*,\C)$ gives a Morita equivalence between $C(\germ{a}_L^*,W_\sigma',\id)$ and $C_0(\germ{a}_L^*,\Compact(\C))^{W_\sigma'}\cong C_0(\germ{a}_L^*/W_\sigma')$.

The group $R_\sigma$ acts compatibly on $C(\germ{a}_L^*,W_\sigma',\id)$, $C_0(\germ{a}_L^*,\C)$, and $C_0(\germ{a}_L^*/W_\sigma')$, and so the construction of Theorem \ref{thm:E-cross-W} allows us to promote the Morita equivalence $C_0(\germ{a}_L^*,\C)$ to a Morita equivalence $C_0(\germ{a}_L^*,\C)\rtimes R_\sigma$, between $C(\germ{a}_L^*,W_\sigma',\id)\rtimes R_\sigma$ and $C_0(\germ{a}_L^*/W_\sigma')\rtimes R_\sigma$. Putting everything together, we have
\begin{align*}
C_0(\germ{a}_L^*,\Compact(\Ind_P^G H_\sigma))^{W_\sigma} \sim C(\germ{a}_L^*,W_\sigma,I)  \cong 
C(\germ{a}_L^*,W_\sigma',\id)\rtimes R_\sigma \sim C_0(\germ{a}_L^*/W_\sigma')\rtimes R_\sigma.
\end{align*}
\end{proof}

Putting Theorems \ref{thm:Plancherel} and \ref{thm:G-Morita} together gives Theorem \ref{thm:main}.

\appendix

\section{Representations of compact groups}\label{sec:compact-groups}

In this appendix we list some facts about unitary representations of compact (eg, finite) groups that are used in our computation of the reduced $C^*$-algebras of real reductive groups. References for this material include the books of Serre \cite{Serre-en} (for the finite case) and Br\"ocker--tom Dieck \cite[Chapters II--III]{BtD}, among many others.

Let $W$ be a compact topological group, and choose a Haar measure $\dd w$. (So $f\mapsto \int_W f(w)\,\dd w$ is a positive linear map $C(W)\to \C$, satisfying $\int_W f(w_1ww_2)\, \dd w = \int_W f(w)\, \dd w$.) We have:
\begin{enumerate}[\rm(1)]
\item Every irreducible unitary representation of $W$ is finite-dimensional.
\item Every unitary representation of $W$ is isomorphic to a direct sum of irreducible representations.
\item The regular representation $\lambda:G\to L^2(G)$ contains a copy of each irreducible representation of $G$.
\item For each irreducible unitary representation $\rho:W\to \Unitary(H_\rho)$ of $W$ we let $e_\rho\in C(W)$ be the function 
\begin{equation}\label{eq:e-rho}
e_\rho(w) = \frac{\dim H_\rho}{\operatorname{volume}(W)} \overline{\trace(\rho(w))}.
\end{equation}
\item Let $\pi:W\to \Unitary(H_\pi)$ be a unitary representation. For each irreducible representation $\rho$, the operator $\pi(e_\rho)\in \Bounded(H_\pi)$ is an orthogonal projection that commutes with $\rho(w)$ for every $w\in W$. The  range $H_{\pi,\rho}\coloneqq \pi(e_\rho)H_\pi$ of this projection is the largest subspace of $H_\pi$ that is isomorphic, as a representation of $W$, to a direct sum of copies of $\rho$. This $H_{\pi,\rho}$ is called the \emph{$\rho$-isotypical subspace} of $H_\pi$. We say that $\rho$ \emph{occurs in} or \emph{is contained in} $\pi$, and write $\rho\subseteq \pi$, if $H_{\pi,\rho}\neq 0$.
\item We have $H_{\pi,\rho}\perp H_{\pi,\sigma}$ if $\rho$ and $\sigma$ are inequivalent irreducible representations, and $H_\pi = \bigoplus_{\rho\in \widehat{W}} H_{\pi,\rho}$. (The direct sum is over a set of representatives for the equivalence classes of irreducible unitary representations of $W$.)
\item Fix a representation $\pi$ and an irreducible representation $\rho$. Let $\HS(\rho,\pi)^W$ denote the space of linear maps $s:H_\rho\to H_\pi$ satisfying $s\rho(w)=\pi(w)s$ for all $w\in W$. Since $\dim H_\rho<\infty$, this space of maps is a Hilbert space under the Hilbert--Schmidt inner product $\langle r\,|\,s\rangle = \trace(r^*s)$. The map
\[
\mu_{\pi,\rho}: H_\rho \otimes \HS(\rho,\pi)^W \to H_{\pi,\rho},\qquad \zeta\otimes s\mapsto (\dim H_\rho)^{1/2} s(\zeta)
\]
is a unitary isomorphism, satisfying
\[
\mu_{\pi,\rho}\circ(\rho(w) \otimes \id_{\HS(\rho,\pi)^W}) = \pi(w)\circ \mu_{\pi,\rho}
\]
for all $w\in W$.
\item Let $\rho:W\to \Unitary(H_\rho)$ be an irreducible representation of $W$. For vectors $\xi,\eta\in H_\rho$ we consider the \emph{matrix coefficient} function
\[
c_{\xi,\eta}:W\to \C,\qquad c_{\xi,\eta}(w)\coloneqq \langle \rho(w)\xi\,|\, \eta\rangle.
\]
The $c_{\xi,\eta}$s are continuous, and thus in particular square-integrable. The \emph{Schur orthogonality relations} say that
\[
\langle c_{\xi,\eta}\,|\,c_{\xi',\eta'}\rangle_{L^2(W)} = (\dim H_\rho)^{-1} \overline{\langle \xi\,|\,\xi'\rangle_{H_\rho}} \langle \eta\,|\,\eta'\rangle_{H_\rho}.
\]

\end{enumerate}

\section{Solutions to the exercises}\label{sec:solutions}

 \begin{proof}[Exercise \ref{ex:W-acting-on-X}]
For each $f\in C_0(X)^W$ let $\phi(f):X/W\to \C$ be defined by $f(Wx)\coloneqq f(x)$. Show that $\phi(f)\in C_0(X/W)$, and that $\phi:C_0(X)\to C_0(X/W)$ is a homomorphism of $C^*$-algebras. If $\phi(f)=0$ then $f=0$, obviously. If $Wx\neq Wy$ then by Urysohn's lemma we can find $f\in C_0(X)$ with $f(wx)=1$ for all $w\in W$, and $f(wy)=0$ for all $w\in W$. Let $h=\sum_{w\in W}\beta_w(f)$. This function is $W$-invariant, and it has $\phi(h)(Wx)\neq 0$ and $\phi(h)(Wy)=0$. So the Stone--Weierstrass theorem implies that $\phi$ is surjective.
\end{proof}

\begin{proof}[Exercise \ref{ex:iterated-crossed-product}]
Routine computations confirm that each $\alpha_v$ is an automorphism of the $C^*$-algebra $A\rtimes U$; that $\alpha_{v_1 v_2}=\alpha_{v_1}\alpha_{v_2}$; and that the linear map
 \[
 \phi : (A\rtimes U)\rtimes V \to A\rtimes W,\qquad \phi\big( (au)v \big)\coloneqq a(uv)
 \]
 is an isomorphism of $C^*$-algebras.
 \end{proof}

\begin{proof}[Exercise \ref{ex:representation-proofs}]
(1) Suppose that $\pi(J)\neq 0$. The set $K=\{\xi\in H\ |\ \pi(j)\xi=0\text{ for all }j\in J\}$ is a closed, $A$-invariant subspace of $H$. Since $\pi$ is irreducible and $\pi(J)\neq 0$ we must have $K=0$. Now let $H'$ be a closed, $J$-invariant subspace of $H$. The set $JH'$ is then a closed $A$-invariant subspace of $H$, so either $JH'=H$ or $JH'=0$. If $JH'=H$ then $H'=H$, while if $JH'=0$ then $H'\subseteq K=0$ and so $H'=0$. Thus $\pi\restrict_J$ is irreducible.

(2) Part (3) of Theorem \ref{thm:reps} implies that $\rho$ is the restriction of an irreducible representation $\pi:A\to \Bounded(H')$ to the $J$-invariant subspace $H\subseteq H'$. Since $\pi\restrict_J$ is irreducible we must have $H=H'$, so $\rho=\pi\restrict_{J}$. 

(3) Let $u:H_\rho\to H_\pi$ be a unitary with $u\rho(j)=\pi(j)u$ for all $j\in J$. The factorisation theorem (Theorem \ref{thm:Cohen-Hewitt}) implies that every $\xi\in H_\rho$ can be written as $\rho(j)\eta$ for some $j\in J$ and $\eta\in H_\rho$, and for each $a\in A$ we have
\[
u\rho(a)\xi = u\rho(aj)\eta = \pi(aj)u\eta = \pi(a)\pi(j)u\eta=\pi(a)u\rho(j)\eta=\pi(a)u\xi.
\]
Thus $u$ is an equivalence between $\pi$ and $\rho$ as representations of $A$. The other implication is easy.

(4) Parts (1)--(3) show that restriction to $J$ gives a bijective map $O_J\to \widehat{J}$. The open subsets of $O_J$ are those $O_{J'}$s with $O_{J'}\subseteq O_J$, while the open subsets of $\widehat{J}$ are those $O_{J'}$ with $J'\subseteq J$. Prove that the restriction map is a homeomorphism by proving that $O_{J'}\subseteq O_J$ if and only if $J'\subseteq J$. The definitions immediately imply that if $J'\subseteq J$ then $O_{J'}\subseteq O_J$. For the reverse implication, suppose that $J'\not\subseteq J$. Then the nonzero $C^*$-algebra $J'/(J'\cap J)$ has an irreducible representation $\pi$. Since $J'/(J'\cap J)$ is an ideal in $A/J$, $\pi$ extends to an irreducible representation of $A/J$, and pulling back to $A$ we have an irreducible $\pi\in \widehat{A}$ with $\pi\restrict_J=0$ and $\pi\restrict_{J'}\neq 0$; so $\pi\in O_{J'}\setminus O_J$, and thus $O_{J'}\not\subseteq O_J$.  

(5) Follows immediately from the isomorphism theorems.

(6) Part (5) implies that the given map is a bijection. The open sets in $\widehat{A}\setminus O_J$ are those of the form $O_{J'}\setminus O_J$, where $J'$ is an ideal of $A$ such that $O_J\subseteq O_{J'}$. We observed above that this condition is equivalent to $J\subseteq J'$. On the other hand, the open subsets of $\widehat{A/J}$ are those of the form
\[
O_{K} = \left\{\widetilde{\pi}\in\widehat{A/J}\ \middle|\ \widetilde{\pi}\restrict_K\neq 0\right\}
\]
for ideals $K\subseteq A/J$. One of the isomorphism theorems says that the map $J'\mapsto J'/J$ is a bijection between the set of ideals $J'\subseteq A$ that contain $J$, and the set of ideals of $A/J$.  Thus the map $\pi\mapsto \widetilde{\pi}$ induces a bijection between the sets of open subsets of $\widehat{A}\setminus O_J$ and $\widehat{A/J}$.

(7) If $J=A$ then no irreducible representation annihilates $J$, because irreducible representations are by definition nonzero. If $J\neq A$ then the nonzero $C^*$-algebra $A/J$ has an irreducible representation (by Theorem \ref{thm:reps}), and pulling back to $A$ we get an irreducible representation that annihilates $J$. 

(8) Let $K$ denote the given intersection. Obviously $J$ is contained in $K$. For the reverse inclusion, let $\pi$ be an irreducible (hence nonzero) representation of $K$. Part (2) implies that $\pi$ extends to an irreducible representation of $A$, and this representation cannot annihilate $J$ (because otherwise the definition of $K$ would give $K\subseteq \ker\pi$.) So part (7) implies that $J=K$.
\end{proof}

\begin{proof}[Exercise \ref{ex:CX-spectrum}]
Recall that to each ideal $J\subseteq C_0(X)$ we associated an open subset
\[
O_J \coloneqq \left\{\ev_x\in \widehat{C_0(X)}\ \middle|\ \exists j\in J\text{ with }\ev_x(j)\neq 0\right\}.
\]
We have
\[
\ev^{-1}(O_J) = \{x\in X\ |\ \exists j\in J\text{ with }j(x)\neq 0\}.
\]
This is an open subset of $X$, because each $j\in J$ is a continuous function. Thus the map $\ev:X\to \widehat{C_0(X)}$ is continuous.

It remains to show that $\ev$ is open. Let $U\subseteq X$ be an open set, and let $J\coloneqq \{j\in C_0(X)\ |\ j\restrict_{X\setminus U}=0\}$. This $J$ is an ideal in $C_0(X)$---it is the kernel of the restriction homomorphism $C_0(X)\to C_0(X\setminus U)$---and we claim that $\ev(U) = O_J$. If $x\in U$ then we can find some $j\in J$ with $j(x)\neq 0$, by Urysohn's lemma. So $\ev_x\in O_J$. Conversely, if $\ev_x\in O_J$ then there is some $j\in J$ with $j(x)\neq 0$, and looking at the definition of $J$ shows that we must have $x\in U$.
\end{proof}

\begin{proof}[Exercise \ref{ex:K-irreducible}]
Let $V$ be a nonzero $\Compact(H)$-invariant subspace of $H$, let $\eta\in V$ be a unit vector, and for each $\xi\in H$ consider the compact operator $\ket{\xi}\bra{\eta}:\zeta\mapsto \langle \eta\, |\, \zeta\rangle\xi$. We have $\xi=\ket{\xi}\bra{\eta}(\eta)\in \Compact(H)V=V$, so $V=H$.
\end{proof}

\begin{proof}[Exercise \ref{ex:direct-sum-irreps}]
For (1): Let $A=\bigoplus_{i\in I}A_i$, and let $\pi:A\to \Bounded(H)$ be an irreducible representation. For each $i$ we have a canonical embedding $A_i\into A$, and the set $H_i\coloneqq \pi(A_i)H$ is an $A$-invariant subspace of $H$---so either $H_i=0$ or $H_i=H$. We have $H_i\perp H_j$ when $i\neq j$, so at most one of the $H_i$s is nonzero. Since $\pi(A)\neq 0$, we have $\pi(A_i)\neq 0$ for exactly one $i$. Then $\pi$ factors through the projection $A\to A_i$.

For (2): If $u:H\to H'$ is an equivalence between $\pi$ and some other irreducible representation $\pi'$, then $u$ restricts to a unitary isomorphism $H_i\to H'_i$ for each $i$, which  implies that $\pi$ and $\pi'$ factor through the same summand $A_i$, and that $u$ is an equivalence between the resulting representations of $A_i$.
\end{proof}

\begin{proof}[Exercise \ref{ex:KW}]
Schur orthogonality ensures that for each $s\in \HS(\rho,\pi)^W$ we have $s(H_\rho)\subseteq H_{\pi,\rho}$, so the composition $t\circ s$ (for $t\in \Bounded(H_{\pi,\rho})^W$) is well defined. Compositions of $W$-equivariant maps are $W$-equivariant, so $ts\in \HS(\rho,\pi)^W$. We have
\[
\| ts\|_{\HS}^2 = \trace(s^*t^*ts) = \trace(t^*tss^*) \leq \|t^*t\|\trace(s^*s) = \|t\|^2 \|s\|_{\HS}^2,
\]
showing that the map $\phi_{\pi,\rho}(t):s\mapsto ts$ is bounded on $\HS(\rho,\pi)^W$. An easy computation shows that $\phi_{\pi,\rho}$ is a homomorphism of $C^*$-algebras. 

Fix $t\in \Bounded(H_{\pi,\rho})^W$. For each $\zeta\in H_\rho$ and $s\in \HS(\rho,\pi)^W$ we have
\[
t \mu_{\pi,\rho}(\zeta\otimes s) = (\dim H_\rho)^{1/2} t s(\zeta) = \mu_{\pi,\rho}(\zeta\otimes \phi_{\pi,\rho}(t)s).
\]
If $\phi_{\pi,\rho}(t)=0$ we therefore have $t\mu_{\pi,\rho}=0$, and since $\mu_{\pi,\rho}$ is a unitary isomorphism onto $H_{\pi,\rho}$ this implies that $t=0$. Thus $\phi_{\pi,\rho}$ is injective.

Next we will show that $\phi_{\pi,\rho}(k)$ is a compact operator on $\HS(\rho,\pi)^W$ when $k\in \Compact(H_{\pi,\rho})^W$. To do this we will first ignore the group $W$, and consider the Hilbert space $\HS(\rho,\pi)$ of all linear maps $H_\rho\to H$, with the Hilbert--Schmidt inner product. Each bounded operator $t\in \Bounded(H)$ defines a linear map $\psi(t)\in \Bounded(\HS(\rho,\pi))$ by composition, and the map $\psi:\Bounded(H)\to \Bounded(\HS(\rho,\pi))$ is a homomorphism of $C^*$-algebras. Given a rank-one operator $k=\ket{\xi}\bra{\eta}$, where $\xi,\eta\in H$, the map $\psi(k)$ factors as the composition
\[
\HS(\rho,\pi) \xrightarrow{s\mapsto \bra{s^*\eta}} H_\rho^* \xrightarrow{\bra{\zeta}\mapsto \ket{\eta}\bra{\zeta}} \HS(\rho,\pi).
\]
The Hilbert space $H_\rho^*$ is finite-dimensional, so $\psi(k)$ has finite rank. By continuity, it follows that $\psi(\Compact(H))\subseteq \Compact(\HS(\rho,\pi))$. In particular, for each $k\in \Compact(H_{\pi,\rho})^W$ the operator $\psi(k)$ is compact, and so the restriction of this operator to the subspace $\HS(\rho,\pi)^W\subseteq \HS(\rho,\pi)$ is also compact. This restriction is precisely $\phi_{\pi,\rho}(k)$. So $\phi_{\pi,\rho}$ sends compact operators to compact operators.

Finally we show that every bounded operator $s$ on $\HS(\rho,\pi)^W$ has the form $\phi_{\pi,\rho}(t)$ for some  $t\in \Bounded(H_{\pi,\rho})^W$, and that this $t$ is compact if $s$ is compact.  Fix $s\in \Bounded(\HS(\rho,\pi)^W)$, and consider the operator $t$ on $H_{\pi,\rho}$ defined as the composition
\[
H_{\pi,\rho}\xrightarrow{\mu_{\pi,\rho}^*} H_\rho \otimes \HS(\rho,\pi)^W \xrightarrow{\id_{H_\rho}\otimes s} H_\rho\otimes \HS(\rho,\pi)^W \xrightarrow{\mu_{\rho,\pi}} H_{\pi,\rho}.
\]
The intertwining relation $\mu_{\pi,\rho}\circ(\rho(w)\otimes \id_{\HS(\rho,\pi)^W})= \pi(w)\circ \mu_{\pi,\rho}$ ensures that $t\in \Bounded(H_{\pi,\rho})^W$. Since $H_\rho$ is finite-dimensional, if $s$ is a compact operator then the operator $\id_{H_\rho}\otimes s$ is compact, and so $t$ is compact. For each $r\in \HS(\rho,\pi)^W$ and $\zeta\in H_\rho$ we have
\[
\begin{aligned}
& \mu_{\pi,\rho}^*\phi_{\pi,\rho}(t)(r)\zeta 
=
\mu_{\pi,\rho}^* tr(\zeta)
 =(\id_{H_\rho}\otimes s)\mu_{\pi,\rho}^*(r(\zeta)) \\
& = (\dim H_\rho)^{-1/2}\zeta \otimes s(r)  = \mu_{\pi,\rho}^* s(r)(\zeta).
\end{aligned}
\]
Multiplying on the left by $\mu_{\pi,\rho}$ gives $\phi_{\pi,\rho}(t)(r)\zeta=s(r)(\zeta)$, and since this holds for all $r$ and $\zeta$ we conclude that $\phi_{\pi,\rho}(t)=s$ as required.
\end{proof}

\begin{proof}[Exercise \ref{ex:XH-action}]
First assume that $W$ acts on $(X,H)$. The assumption that the projection $X\times H\to X$ is $W$-equivariant implies that the action of $W$ on $X\times H$ is given by $w(x,\xi)=(wx,I_{w,x}\xi)$. Comparing the $H$-coordinates on both sides of the equality $(w_1 w_2)(x,\xi)=w_1(w_2(x,\xi))$ gives the identity (i), and the map $x\mapsto I_{w,x}\xi$ from (ii) is the composition of the continuous maps
\[
X \xrightarrow{x\mapsto (x,\xi)} X\times H \xrightarrow{\text{act by $w$}} X\times H \xrightarrow{\text{project}} H.
\]

Now, for part (2), assume that we have a family of unitary operators $I_{w,x}$ satisfying conditions (i) and (ii). The same observation as in the proof of part (1) shows that the formula $w(x,\xi)\coloneqq (wx,I_{w,x}\xi)$ defines an action (just at the level of sets) of $W$ on $X\times H$, for which the projection map $X\times H\to X$ is $W$-equivariant. To see that this action is continuous, fix $w\in W$ and let $(x_i,\xi_i)$ be a net in $X\times H$ converging to $(x,\xi)$. We must show that the net $w(x_i,\xi_i) = (wx_i, I_{w,x_i}\xi_i)$ converges to $w(x,\xi)=(wx,I_{w,x}\xi)$. We are assuming that the $W$-action on $X$ is continuous, so certainly $wx_i$ converges to $wx$. In the $H$-coordinate we have
\[
\| I_{w,x_i}\xi_i - I_{w,x}\xi\| \leq \|I_{w,x_i}\xi_i - I_{w,x_i}\xi\| + \|I_{w,x_i}\xi - I_{w,x}\xi\|,
\]
where the first summand converges to $0$ because each $I_{w,x_i}$ is unitary and $\xi_i$ converges to $\xi$; and the second summand converges to $0$ by condition (ii).
\end{proof}

\begin{proof}[Exercise \ref{ex:beta-w-f-continuous}]
Fix $k\in C_0(X,\Compact(H))$, $x\in X$, and $w\in W$. Suppose that $x_i$ is a net in $X$ converging to $x$. To save space, write $y\coloneqq w^{-1}x$ and $y_i\coloneqq w^{-1}x_i$.  

Fix $\epsilon>0$. Since $k$ is continuous, we can find $I_1$ such that $i>I_1$ implies $\| k(y_i)-k(y)\|<\epsilon$. Since $k(y)$ is a compact operator, we can find a finite-rank operator $f=\sum_{m=1}^n \ket{\eta_m}\bra{\xi_m}$ with $\|k(y)-f\|<\epsilon$. Extend $f$ to a $C_0$ function on $X$, with $f(y_i)=f(y)=f$ for all  $i>I_1$.  Note that we then have, for $i>I_1$, 
\[
\|k(y_i)-f\| \leq \|k(y_i)-k(y)\| + \|k(y)-f\| <2\epsilon.
\]
Since the family of unitary operators $I_{w,x}$ is strong-operator continuous, we can find $I_2>I_1$ such that $i>I_2$ implies 
\[
\|(I_{w,y_i}-I_{w,y})\xi_m\|<\frac{\epsilon}{n\|\eta_m\|} \quad \text{and}\quad \|(I_{w,y_i}-I_{w,y})\eta_m\|<\frac{\epsilon}{n\|\xi_m\|}
\]
for all $m$. We have
\begin{multline*}
\| \alpha_w(k)(x_i)-\alpha_w(k)(x)\|  \\ \leq \| \alpha_w(k-f)(x_i)\| + \|\alpha_w(f)(x_i)-\alpha_w(f)(x)\| + \|\alpha_w(f-k)(x)\|.
\end{multline*}
Bounding the three summands on the right-hand side in turn, for $i>I_2$, we find that $\| \alpha_w(k)(x_i)-\alpha_w(k)(x)\| < 5\epsilon$.

This shows that $\alpha_w(k):X\to \Compact(H)$ is a norm-continuous function. Since the operators $I_{w,w^{-1}x}$ and $I_{w^{-1},x}$ are unitary we have $\|\alpha_w(k)(x)\| = \|k(w^{-1}x)\|\to 0$ as $x\to\infty$, so $\alpha_w(k)\in C_0(X,\Compact(H))$. The fact that each $\alpha_w$ is a $C^*$-algebra automorphism, and that $\alpha_{w_1 w_2} = \alpha_{w_1}\circ \alpha_{w_2}$, follows easily from the identity (i) in Exercise \ref{ex:XH-action}.
\end{proof}

\begin{proof}[Exercise \ref{ex:fixed-point-crossed-product}]
 It is easy to check that the map $fw\mapsto \phi(fw)$ is a $*$-homomorphism from $C_0(X)\rtimes W$ to $C_0(X,\Compact(\ell^2 W))^W$. Define a map $\psi: C_0(X,\Compact(\ell^2 W))^W\to C_0(X)\rtimes W$ by
 \[
 \psi(f) = \sum_{w\in W} \langle \delta_1 \ |\  f(\ )\delta_w\rangle w,
 \]
 where $\langle \delta_1 \ |\ f(\ )\delta_w\rangle\in C_0(X)$ means the function $x\mapsto \langle \delta_1\ |\ f(x)\delta_w\rangle$. Check that $\psi$ is an inverse to $\phi$.
\end{proof}

\begin{proof}[Exercise \ref{ex:CXKW-example}]
To check that we have an action on $(\R,\C^2)$ we just need to check that $I_{w,-x}I_{w,x}=I_{1,x}$ for each $x\in \R$, which is clearly true.

For each $x\neq 0$ we have $Wx=\{x,-x\}$, and $W_x=\{1\}$, which has only one irreducible representation (the trivial one-dimensional one). So $C_0(\R,\Compact(\C^2))^W$ has one irreducible representation for each orbit $\{x,-x\}$ with $x\neq 0$. For $x=0$ we have $Wx=\{x\}$ and $W_x=W$, which has two irreducible representations: the trivial one ($w\mapsto 1$) and the sign representation ($w\mapsto -1$). Both of these representations occur in the representation $I_0 : w\mapsto \smallbmat{1 & 0 \\ 0 & {-1}}$, so the orbit $\{0\}$ gives two inequivalent irreducible representations of $C_0(\R,\Compact(\C^2))^W$. So we have
\[
\left(C_0(\R,\Compact(\C^2))^W\right)^{\widehat{\ }}= \{ \pi_{x,\operatorname{triv}}\ |\ x>0\} \sqcup \{ \pi_{0,\operatorname{triv}}, \pi_{0,\operatorname{sign}}\}.
\]
Explicitly, for each $k\in C_0(\R,\Compact(\C^2))^W$ and each $x\geq 0$ we have $\pi_{x,\operatorname{triv}}(k)=k(x)\in \Bounded(\C^2)$, while $\pi_{0,\operatorname{triv}}(k) = k(0)_{1,1}$ (the top-left entry of the diagonal matrix $k(0)$, with respect to the standard basis) and $\pi_{0,\operatorname{sign}}(k)=k(x)_{2,2}$ (the bottom-right entry.)

To show that the dual is non-Hausdorff we will show that the points $\pi_{0,\operatorname{triv}}$ and $\pi_{0,\operatorname{sign}}$ cannot be separated by open sets. Recall that the open sets $O_J$ in $\left(C_0(\R,\Compact(\C^2))^W\right)^{\widehat{\ }}$ correspond to ideals $J$ in $C_0(\R,\Compact(\C^2))^W$: we have $\pi_{x,\rho}\in O_J$ if and only if $\pi_{x,\rho}\restrict_J\neq 0$. Let $J$ be an ideal in $C_0(\R,\Compact(\C^2))^W$ such that $\pi_{0,\operatorname{triv}}\in O_J$. Then $J$ must contain a function $j$ with $j(0)_{1,1}\neq 0$, and this ensures that $j(x)_{1,1}\neq 0$ for all $x$ in some nonempty interval $(0,\epsilon)$. Thus $\pi_{x,\operatorname{triv}}(j)=j(x)\neq 0$ for all $x\in (-\epsilon,\epsilon)$, and so the open set $O_J$ contains $\pi_{x,\operatorname{triv}}$ for all $x\in (0,\epsilon)$. The same argument shows that every open set containing $\pi_{0,\operatorname{sign}}$ contains some nonempty interval $\{\pi_{x,\operatorname{triv}}\ |\ x\in (0,\delta)\}$, and so the points $\pi_{0,\operatorname{triv}}$ and $\pi_{0,\operatorname{sign}}$ do not have disjoint open neighbourhoods.
\end{proof}

\begin{proof}[Exercise \ref{ex:isotypical-approximation}] 
A straightforward estimate shows that for every $h\in C_c(G)$ we have $\|h\|_{C^*_r(G)}=\|\lambda(h)\|_{\Bounded(L^2(G))}\leq \|h\|_{L^1(G)}$, so it will suffice to prove the convergence in the $L^1$ norm. Since $f$ has compact support, and $K$ is compact, there is a compact subset $C$ of $G$ such that $\operatorname{support}(f-e_F\ast f)\subseteq C$ for every $F$, and then the Cauchy--Schwarz inequality  gives $\|f-e_F\ast f\|_{L^1(G)}\leq \operatorname{volume}(C)^{1/2} \|f-e_F\ast f\|_{L^2(G)}$.  So it will suffice to prove convergence in the $L^2$ norm, and this convergence follows from the isotypical decomposition $L^2(G) = \bigoplus_{\rho\in \widehat{K}} L^2(G)_\rho$.
\end{proof}

\begin{proof}[Exercise \ref{ex:separating-irreps}]
Condition (1) in Definition \ref{def:SWP} implies that restriction of representations from $A$ to $B$ gives a well-defined map $r:\widehat{A}\to \widehat{B}$. Condition (2) ensures that this map is injective, while part (3) of Theorem \ref{thm:reps} implies that it is surjective. 

To show that $r$ is a homeomorphism, let $J$ be an ideal in $A$. We claim that for each $\pi\in \widehat{A}$ we have $\pi\restrict_J=0$ if and only if $\pi\restrict_{J\cap B}=0$. One implication is obvious; for the other, note that if $\pi$ annihilates $J\cap B$ then $\pi$ induces an irreducible representation of $B/(J\cap B)$. By one of the isomorphism theorems we have $B/(J\cap B)\subseteq A/J$, so Theorem \ref{thm:reps} implies that there is an irreducible representation $\rho$ of $A$ that annihilates $J$, and whose restriction to $B$ is equivalent to $\pi$. Property (2) from Definition \ref{def:SWP} then ensures that $\rho$ is equivalent to $\pi$ as representations of $A$, and so $J\subseteq\ker\rho=\ker\pi$. 

This claim shows that $r(O_J)=O_{B\cap J}$, where we recall that $O_J$ denotes the open subset $\{\pi\in\widehat{A}\ |\ \pi\restrict_J\neq 0\}$ of $\widehat{A}$. Every ideal of $B$ has the form $B\cap J$ for some ideal $J$ of $A$: given an ideal $K$ of $B$, find a family $(\pi_i)_{i\in I}$ of irreducible representations of $B$ with $\bigcap_{i\in I} (B\cap \ker \pi_i) = K$, and then take $J=\bigcap_{i\in I} \ker \pi_i$. Thus the map $r$ induces a bijection between the set of open subsets of $\widehat{A}$ and the set of open subsets of $\widehat{B}$.
\end{proof}

\begin{proof}[Exercise \ref{ex:trace-k-continuous}]
By linearity it is enough to consider $k_i\to 0$ in the strong-operator topology, and $t$ positive. Write $t = \sum_j c_j \ket{\delta_j}\bra{\delta_j}$ for some orthonormal basis $\{\delta_j\}$ and scalars $c_j\geq 0$. Given $\epsilon>0$, find $n$ such that  $\sum_{j\geq n}^\infty c_j < \epsilon/\sup\|k_i\|$. (Note that the $\|k_i\|$s are bounded, by the uniform boundedness principle.) Then find $I$ such that $i>I$ implies $\|k_i \delta_j\|<\epsilon/(n\sup_l c_l)$ for all $j\leq n$. We have, for $i>I$,
\[
\begin{aligned}
\trace(tk_i)  = \sum_{j=1}^\infty \langle \delta_j\, |\, t k_i\delta_j\rangle 
 = \sum_{j=1}^\infty c_j \langle \delta_j\,|\,k_i\delta_j\rangle  = \sum_{j\leq n} c_j \langle\delta_j\,|\,k_i\delta_j\rangle + \sum_{j>n} c_j\langle \delta_j\,|\,k_i\delta_j\rangle.
\end{aligned}
\]
The absolute value of the first sum on the right-hand side is bounded above by 
\(
\left(\sup_{l\leq n} c_l\right) \sum_{j=1}^n \|k_i\delta_j\|<\epsilon,
\) 
and the absolute value of the second sum is bounded above by 
\(
\left(\sum_{j>n}c_j\right)\sup_i\|k_i\|<\epsilon.
\)
\end{proof}

\begin{proof}[Exercise \ref{ex:SW-extensions}]
Let $B$ be a separating subalgebra of $A$. We claim that $B/(B\cap J)$ is separating in $A/J$, and that $B\cap J$ is separating in $J$. If this claim is true then the Stone--Weierstrass property ensures that $B\cap J=J$ and $B/J=A/J$, and it is easy to conclude from this that $A=B$. 

To prove the claim, we first consider $B/(B\cap J)$ and $A/J$. Let $\pi:A/J\to \Bounded(H)$ be an irreducible representation. The map $a\mapsto \pi(a+J)$ is then an irreducible representation of $A$. Since $B$ is separating in $A$, the map $b\mapsto \pi(b+J)$ is an irreducible representation of $B$, which means that $b+(B\cap J)\mapsto \pi(b+J)$ is an irreducible representation of $B/(B\cap J)$. So property (1) from the definition of `separating' is satisfied. For the second property, note that $\pi$ and $\rho$ are equivalent as representations of $B/(B\cap J)$ if and only  if their pullbacks to $B$ are equivalent, and since $B$ is separating in $A$ this implies that $\pi$ and $\rho$ are equivalent as representations of $A$, and therefore also as representations of $A/J$.

We are left to show that $B\cap J$ is separating in $J$. Recall (Theorem \ref{thm:ideal-irreps}) that every irreducible representation of $J$ is the restriction to $J$ of an irreducible representation of $A$ that does not annihilate $J$. Let $\pi$ be such a representation. The restriction $\pi\restrict_{B\cap J}$ is the restriction to $B\cap J$ of the irreducible representation $\pi\restrict_B$, so $\pi\restrict_{B\cap J}$ is irreducible so long as it is nonzero. If $\pi$ annihilates $B\cap J$ then $\pi$ defines an irreducible representation of the quotient $B/(B\cap J)$. We showed above that $B/(B\cap J)=A/J$, so $\pi$ must annihilate $J$. We assume that $\pi$ does not annihilate $J$, and so $\pi$ cannot annihilate $B\cap J$. Thus $B\cap J$ has property (1) from the definition of `separating'. To prove that it also has property (2), suppose that $\pi,\rho$ are irreducible representations of $A$ whose restrictions to $J$ are nonzero, and whose restrictions to $B\cap J$ are equivalent. Then $\pi\restrict_B$ is equivalent to $\rho\restrict_B$, and since $B$ is separating in $A$ we conclude that $\pi$ and $\rho$ are equivalent as representations of $A$, whence also of $J$.

The `deduce' part follows by induction on $n$: if $n=1$ then $A=A/0$ has the Stone--Weierstrass property and there is nothing to prove. For the inductive step we have a short exact sequence $0\to J_1\to A \to A/J_1\to 0$, where $A/J_1$ has the SWP (by hypothesis) and $J_1$ has the SWP (by the induction hypothesis), so $A$ also has the SWP by what we proved above.
\end{proof}

\begin{proof}[Exercise \ref{ex:SW-plane-example}]
Let $J_0=A$. Let $\phi_0:J_0\to \Mat_2$ be the map $f\mapsto f(0)$, and let $J_1\coloneqq \ker \phi_0$. The image of $\phi_0$ is $\C \id_2$, the scalar multiples of the identity matrix, because these are the only matrices that commute with every $w\in W$. Thus $J_0/J_1\cong \C$, which certainly has the SWP.

Next consider the sets
\[
X = \{(x,0)\ |\ x>0\} \quad \text{and}\quad Z = \{(x,x)\ |\ x>0\}.
\]
For each $x\in X$ we have $W_x=\{1,s\}$, while for each $z\in Z$ we have $W_z = \{1,t\}$. Let $\phi_1:J_1\to C_0(X,\Mat_2)\oplus C_0(Z,\Mat_2)$ be the map $\phi_1(f)=(f\restrict_X,f\restrict_Z)$, and let $J_2\coloneqq \ker \phi_1$.  Since $W_x=\langle s\rangle$ for each $x\in X$, and $W_z=\langle t\rangle$ for each $z\in Z$, we have
\[
\phi_1(J_1) = C_0(X, s') \oplus C_0(Z, t')
\]
where $s'=\{k\in \Mat_2\ |\ ks=sk\} = \left\{\smallbmat{a & 0 \\ 0 & b}\right\}$ and $t' = \{k\in \Mat_2\ |\ kt=tk\} = \left\{\smallbmat{a & b \\ b & a}\right\}$. The $C^*$-algebras $s'$ and $t'$ are both isomorphic to $\C\oplus\C$, so $J_1/J_2\cong C_0(X,s')\oplus C_0(Z,t')$ is a direct sum of copies of $C_0(\R)$, hence has the SWP. 

Now consider the set
\[
P = \{(x,y)\in \R^2\ |\ x>0,\ 0<y<x\}.
\]
We have $W_x=\{1\}$ for each $x\in P$. Restriction of functions thus gives an isomorphism of $C^*$-algebras
\(
\phi_2: J_2 \to C_0(P,\Mat_2).
\)
 We know that $C_0(P,\Mat_2)$ has the Stone--Weierstrass property, so taking $J_3=0$, the composition series $A=J_0\supset J_1\supset J_2\supset 0$ has the required properties. Exercise \ref{ex:SW-extensions} now implies that $A$ has the SWP.
\end{proof}

\begin{proof}[Exercise \ref{ex:full}]
Part (1) follows easily from the properties of the inner product. For (2): let $J\subseteq C_0(X)$ be the ideal $\overline{\lspan}\{\langle \xi\, |\, \eta\rangle\ |\ \xi,\eta\in C_0(X,H)\}$. For each $x\in X$ we can find a function $j\in J$  that does not vanish at $x$, by letting $\xi\in C_0(X,H)$ be a function that does not vanish at $x$ and then taking $j=\langle\xi\,|\,\xi \rangle$. Now Theorem \ref{thm:ideal-irreps} part (7) ensures that $J=C_0(X)$.
\end{proof}

\begin{proof}[Exercise \ref{ex:V-tensor-E}]
For part (1): Conditions (1)--(3) of Definition \ref{def:Hilbert-module} are easy to check. To check conditions (4) and (5), note that the characterisation of positivity in terms of representations given in part (a) of Definition \ref{def:states} implies that if $a$ and $b$ are positive elements of a $C^*$-algebra, then $a+b$ is positive, and $\|a+b\|\geq \max\{\|a\|,\|b\|\}$. 

For part (2): choosing an orthonormal basis for $V$ identifies $V\otimes E$ with the direct sum (in the sense of part (1)) of $\dim V$ copies of $E$.
\end{proof}

\begin{proof}[Exercise \ref{ex:non-adjointable}]
Denote the inclusion map by $t$, and suppose that this map has an adjoint $t^*:C([0,1])\to C_0((0,1])$. The adjunction relation implies that $t^*$ maps the constant function $1\in C([0,1])$ to a multiplicative identity for the $C^*$-algebra $C_0((0,1])$; but this algebra has no multiplicative identity.
\end{proof}

\begin{proof}[Exercise \ref{ex:K-C_0-H}]
Each  $k\in C_0(X,\Compact(H))$ defines a linear operator on $C_0(X,H)$ by pointwise multiplication, and these operators are all adjointable (via the pointwise adjoint). In this way we can view $C_0(X,\Compact(H))$ as a subalgebra of the $C^*$-algebra of adjointable operators $\Adjointable_{C_0(X)}(C_0(X,H))$. 

For all  $\xi,\eta\in C_0(X,H)$, the operator $\ket{\eta}\bra{\xi}\in \Compact_{C_0(X)}(C_0(X,H))$ is pointwise multiplication by the function $(x\mapsto \ket{\eta(x)}\bra{\xi(x)}) \in C_0(X,\Compact(H))$, and so $\Compact_{C_0(X)}(C_0(X,H))\subseteq C_0(X,\Compact(H))$. To prove that we have equality, note that $\Compact_{C_0(X)}(C_0(X,H))$ is an ideal in $C_0(X,\Compact(H))$, so we just need to show (thanks to Theorem \ref{thm:ideal-irreps}(7)) that each irreducible representation of $C_0(X,\Compact(H))$ is nonzero on this ideal. The irreducible representations in question are just the point-evaluations. For each $x\in X$ we  let $\xi\in C_0(X,H)$ be a function with $\xi(x)\neq 0$, and then we get an operator $\ket{\xi}\bra{\xi}\in \Compact_{C_0(X)}(C_0(X,H))$ that is nonzero at $x$.
\end{proof}

\begin{proof}[Exercise \ref{exercise:E-dual}]
For (1): According to the definition of compact operators we have $\Compact_B(E,B) = \overline{\lspan}\{ \ket{b}\bra{\xi}\ |\ b\in B,\ \xi\in E\}$. We have on the one hand $\ket{b}\bra{\xi}=\bra{\xi b^*}$; while on the other hand we know (from Lemma \ref{lem:E=EB}) that each $\xi\in E$ can be written as $\eta b$ for some $\eta\in E$ and $b\in B$, giving $\bra{\xi}=\ket{b^*}\bra{\eta}$. Lastly check that $\{ \bra{\xi}\ |\ \xi\in E\}$ is already an operator-norm-closed linear subspace of $\Adjointable_B(E,B)$.

For (2): First check that $b\mapsto m_b$ is a homomorphism of $C^*$-algebras $B\to \Adjointable_B(B)$; then use part (1) with $E=B$ to conclude that this homomorphism is a surjection $B\to\Compact_B(B)$. Finally, observe that this homomorphism is injective by noting that $\|m_b(b^*)\|=\|b\|^2$.

For (3): It is easy to check that $\llangle\ |\ \rrangle$ satisfies properties (1)--(3) in Definition \ref{def:Hilbert-module}. To check properties (4) and (5), we first note that for each $k\in \Compact_B(E,B)\subset \Compact_B(E\oplus B)$ the operator $\langle k\, |\, k\rangle = k^*k\in \Compact_B(E)$ is clearly a positive element of $\Compact_B(E\oplus B)$, and the characterisation of positivity in terms of representations (from Definition \ref{def:states}) ensures that if an element $a$ of a $C^*$-algebra $A$ is positive in $A$ and lies in a subalgebra $B$, then $a$ is also positive as an element of $B$. Moreover, we have 
\[
\|\langle k\, |\, k\rangle\|_{\Compact_B(E)}= \|\langle k\, |\, k\rangle\|_{\Compact_B(E\oplus B)} = \|k^*k\|_{\Compact_B(E\oplus B)} = \|k\|^2_{\Compact_B(E\oplus B)} = \|k\|^2_{\Compact_B(E,B)}.
\]
Since we know that $\Compact_B(E,B)$ is a Banach space in the operator norm, this completes the proof that $\llangle\ |\ \rrangle$ makes $\Compact_B(E,B)$ into a Hilbert $\Compact_B(E)$-module. This module is full because for all $\xi,\eta\in E$ we have $\llangle\, \bra{\eta}\, |\, \bra{\xi}\, {\rrangle} = \ket{\eta}\bra{\xi}$, and $\Compact_B(E)$ is (by definition) densely spanned by the $\ket{\eta}\bra{\xi}$s. This concludes the proof of part (3).

For part (4): for $\xi,\eta\in E$, the operator $\big| \bra{\xi} \big\rangle \big\langle \bra{\eta}\big|\in \Compact_{\Compact_B(E)}(\Compact_B(E,B))$ sends a compact operator $k\in \Compact_B(E,B)$ to the operator $\bra{\xi} \llangle \, \bra{\eta}\, |\, k \rrangle = \bra{\xi}\circ \ket{\eta}\circ k = m_{\langle \xi\, |\, \eta\rangle}\circ k$. This computation shows, on the one hand, that each $\Compact_B(E)$-compact operator on $\Compact_B(E,B)$ has the form $m_b$. On the other hand, since $E$ is full over $B$, this computation also shows that each $m_b$ is a compact operator on $\Compact_B(E,B)$. So the map $b\mapsto m_b\circ$ is a surjection from $B$ to $\Compact_{\Compact_B(E)}(\Compact_B(E,B))$. It is straightforward to check that this map is a homomorphism of $C^*$-algebras. To see that this map is an isomorphism, suppose that $m_b\circ k=0$ for all $k\in \Compact_B(E,B)$. Then for all $\xi,\eta\in E$ we have $b\langle \xi\, |\, \eta\rangle = m_b\circ \bra{\xi}(\eta) = 0$. Since $E$ is full over $B$ the elements $\langle \xi\, |\, \eta\rangle$ span a dense subspace of $B$, so $ba=0$ for every $a\in B$. Then $\|b\|^2 = \|bb^*\|=0$, so $b=0$.

For part (5): the fact that the given inner product makes $\Compact_B(B,E)$ into a right Hilbert module over $\Compact_B(B)$ follows, as in part (4), from embedding $\Compact_B(B,E)$ into the $C^*$-algebra $\Compact_B(E\oplus B)$. The isomorphism $\Compact_B(B)\cong B$ comes from part (2). Since each $\xi\in E$ can be written as $\eta b$ for some $\eta\in E$ and some $b\in B$, the operator $\ket{\xi}=\ket{\eta}\bra{b^*}$ is $B$-compact. The same equality shows that $\Compact_B(B,E)$ is densely spanned by the $\ket{\xi}$s. Straightforward computations verify that the map $\xi\mapsto \ket{\xi}$ is an inner-product-preserving (hence, in particular, isometric) map of Hilbert $B$-modules, and since the image of this isometric map is dense in $\Compact_B(B,E)$ the map is surjective.
\end{proof}

\begin{proof}[Exercise \ref{ex:crossed-product-norm}]
Compute! For example, to prove that $t_{bw}$ is adjointable, with $t_{bw}^* = t_{(bw)^*}$, we compute
\begin{align*}
& \langle \delta_{v_1}\otimes a_1\, |\, t_{bw}(\delta_{v_2}\otimes a_2) \rangle  = \langle \delta_{v_1}\otimes a_1\, |\, \delta_{wv_2}\otimes \beta_{v_2^{-1}w^{-1}}(b)a_2\rangle \\
& = \langle \delta_{v_1}\, |\, \delta_{wv_2}\rangle a_1^*\beta_{v_2^{-1}w^{-1}}(b)a_2 = \langle \delta_{w^{-1}v_1}\, |\, \delta_{v_2}\rangle \left( \beta_{v_1^{-1}}(b^*) a_1\right)^* a_2 \\
& = \langle \delta_{w^{-1}v_1} \otimes \beta_{v_1^{-1}w}(\beta_{w^{-1}}(b^*))a_1\, |\, \delta_{v_2}\otimes a_2\rangle 
= \langle t_{(bw)^*}(\delta_{v_1}\otimes a_1)\, |\, \delta_{v_2}\otimes a_2\rangle.\qedhere
\end{align*}
\end{proof}

\begin{proof}[Exercise \ref{ex:Morita-symmetric}]
Let $E$ be a Morita equivalence from $A$ to $B$. Using part (3) of Exercise \ref{exercise:E-dual}, and the isomorphism $A\cong \Compact_B(E)$ that is part of the data of the given Morita equivalence, we can make $\Compact_B(E,B)$ into a full right Hilbert $A$-module. Part (4) of Exercise \ref{exercise:E-dual} then gives an isomorphism $B\cong \Compact_{A}(\Compact_B(E,B))$, so $\Compact_B(E,B)$ is a Morita equivalence from $B$ to $A$.
\end{proof}

\begin{proof}[Exercise \ref{ex:Morita-sum}]
For each $i$ let $E_i$ be a Morita equivalence from $A$ to $B$, with isomorphisms $\phi_i:A_i\to \Compact_{B_i}(E_i)$. Form the $c_0$-direct sum
\[
E =\bigoplus_i E_i \coloneqq \{ (\xi_i)_{i\in I}\ |\ \|\xi_i\|\to 0\text{ as }i\to\infty\},
\]
and check that the definition $\langle (\xi_i)_i\, |\, (\eta_i)_i\rangle = (\langle \xi_i\, |\, \eta_i)_i$ makes $E$ into a Hilbert module over $B=\bigoplus_i B_i$, for which the map $\bigoplus_i \phi_i : \bigoplus_i A_i \to \bigoplus_i \Compact_{B_i}(E_i) =\Compact_B(E)$ is an isomorphism.
\end{proof}

\begin{proof}[Exercise \ref{ex:W-acts-on-K}]
First use the equalities in Definition \ref{def:equivariant-Hilbert-module} to show that for all $\xi,\eta,\zeta\in E$ and $w\in W$ we have $\gamma_w \ket{\xi}\bra{\eta} \gamma_w^{-1}\zeta = \ket{\gamma_w \xi}\bra{\gamma_w\eta} \zeta$. In particular, $\gamma_w \ket{\xi}\bra{\eta} \gamma_w^{-1}$ lies in $\Compact_B(E)$, and so by linearity and continuity we have $\alpha_w(k)\in \Compact_B(E)$ for all $k\in \Compact_B(E)$. It is easy to see that each $\alpha_w$ is an algebra automorphism, and the equality $\alpha_w(\ket{\xi}\bra{\eta})=\ket{\gamma_w\xi}\bra{\gamma_w\eta}$ implies that we also have $\alpha_w(k^*)=\alpha_w(k)^*$ for all $k\in \Compact_B(E)$. The fact that $\alpha_{w_1}\alpha_{w_2}=\alpha_{w_1w_2}$ follows from the assumption that $\gamma_{w_1}\gamma_{w_2}=\gamma_{w_1w_2}$.
\end{proof}

\begin{proof}[Exercise \ref{ex:C-ideal}]
For each $x\in X$ we define $C(Wx,W,I)\subseteq C(Wx)\rtimes W$, as in Definition \ref{def:CXWI}, by restricting our attention to the operators $I_{w,y}$ for $y$ in the orbit $Wx$. Restriction of functions from $X$ to $Wx$ gives a homomorphism of $C^*$-algebras $\pi_{Wx}:C_0(X)\rtimes W\to C(Wx)\rtimes W$, and since $C(X,W,I)$ is defined by pointwise conditions we have
\[
C(X,W,I) = \bigcap_{x\in X} \pi_x^{-1}(C(Wx, W,I)).
\]
Preimages and intersections of ideals are ideals, so it is enough to prove that $C(Wx,W,I)$ is an ideal in $C(Wx)\rtimes W$. Since $C(Wx)\rtimes W$ is finite-dimensional we don't have to worry about checking closure in the norm, and routine algebraic computations confirm that $C(Wx,W,I)$ is closed under multiplication on either side by each $fw\in C(Wx)\rtimes W$.
\end{proof}

\bibliographystyle{alpha}
\bibliography{CIRM.bib}

\end{document}